\numberwithin{equation}{section}
\newtheorem{thm}{Theorem}[section]
\newtheorem{lem}[thm]{Lemma}
\theoremstyle{definition}
\newtheorem{Ass}[thm]{Assumption}
\newtheorem{rem}[thm]{Remark}
\newtheorem{cor}[thm]{Corollary}
\newtheorem{Nott}[thm]{Notation}
\DeclareMathOperator{\DIV}{div}
\DeclareMathOperator*{\wstlim}{w^*-lim}
\newcommand{\R}{\mathbb{R}}
\newcommand{\supp}{\text{supp}}
\newcommand{\diff}{\mathop{}\!\mathrm{d}}
\newcommand{\wstar}{\overset{\ast}{\rightharpoonup}}
\newcommand{\ueps}{u^{\varepsilon}}
\newcommand{\veps}{v^{\varepsilon}}
\newcommand{\weps}{w^{\varepsilon}}
\newcommand{\zeps}{z^{\varepsilon}}
\newcommand{\weaks}{\overset{\ast}{\rightharpoonup}}
\newcommand{\weaksinv}{\overset{\ast}{\leftharpoonup}}
\newcommand{\weak}{\rightharpoonup}
\newcommand{\doublewidetilde}[1]{{%
  \mathpalette\double@widetilde{#1}%
}}
\newcommand{\double@widetilde}[2]{%
  \sbox\z@{$\m@th#1\widetilde{#2}$}%
  \ht\z@=.9\ht\z@
  \widetilde{\box\z@}%
}
\author{Beno\^\i t Perthame}
\address{Sorbonne Universit{\'e}, CNRS, Universit\'{e} de Paris, Inria, Laboratoire Jacques-Louis Lions UMR7598, F-75005 Paris}
\email{Benoit.Perthame@sorbonne-universite.fr}
\thanks{Beno\^\i t Perthame has received funding from the European Research Council (ERC) under the European Union's Horizon 2020 research and innovation program (grant agreement No 740623).}
\author{Jakub Skrzeczkowski}
\address{Faculty of Mathematics, Informatics and Mechanics, University of Warsaw, Stefana Banacha 2, 02-097 Warsaw, Poland}
\email{jakub.skrzeczkowski@student.uw.edu.pl}
\thanks{Jakub Skrzeczkowski was supported by National Science Center, Poland through project no. 2018/30/M/ST1/00423. This work was completed while J.S. was a visitor at Laboratoire Jacques-Louis Lions whose kind hospitality he appreciates.}
\begin{document}

\title[Fast reaction limit with nonmonotone reaction function]{Fast reaction limit with nonmonotone reaction function}

\begin{abstract}
We analyse fast reaction limit in the reaction-diffusion system with nonmonotone reaction function and one non-diffusing component. As speed of reaction tends to infinity, the concentration of non-diffusing component exhibits fast oscillations. We identify precisely its Young measure which, as a by-product, proves strong convergence of the diffusing component, a result that is not obvious from a priori estimates. Our work is based on analysis of regularization for forward-backward parabolic equations by Plotnikov. We rewrite his ideas in terms of kinetic functions which clarifies the method, brings new insights, relaxes assumptions on model functions and provides a weak formulation for the evolution of the Young measure.
\end{abstract}

\keywords{reaction-diffusion, cross-diffusion, oscillations, fast reaction limit, forward-backward diffusion, unstable solutions, kinetic formulation, Young measures}

\subjclass{35K57, 35B25, 35B36}

\maketitle
\setcounter{tocdepth}{1}
\tableofcontents
\section{Introduction}
\noindent Let $\Omega \subset \R^d$ be a smooth, bounded domain. We consider the following system of reaction-diffusion equations with Neumann boundary conditions, 
\begin{align}
\partial_t \ueps &=  \frac{\veps - F(\ueps)}{\varepsilon}, \label{system_1}\\
\partial_t \veps &= \Delta \veps + \frac{F(\ueps) - \veps}{\varepsilon}, \label{system_2}
\end{align}
where $t \geq 0$, $x \in \Omega$ and $F:\R \to [0,\infty)$ is a sufficiently smooth function. System \eqref{system_1}--\eqref{system_2} with a non-monotonic $F$, which is our interest here, is an interesting toy model for studying oscillations in reaction-diffusion systems as they are known to occur in its steady states \cite{MR3908864}.

\begin{Ass}[Initial data] \label{ass:ID}
 The system is completed with initial values $\ueps(0,x) = u_0(x)$, $\veps(0,x) = v_0(x)$ satisfying 
\begin{enumerate}
\item Nonnegativity: $u_0, v_0 \geq 0$.
\item \label{ass:ic2} Regularity: $u_0, v_0 \in C^{2+\alpha}(\overline{\Omega})$ for some $\alpha \in (0,1)$.
\item Boundary condition: $u_0, v_0$ satisfy the Neumann boundary condition.
\end{enumerate}
\end{Ass}
\noindent Under appropriate assumptions (see Theorem \ref{theorem:well_posed}), there is a unique classical solution of \eqref{system_1}--\eqref{system_2} which is bounded and nonnegative. Such systems are usually called {\it mass conservative} as it is easy to check that the quantity
$$
\int_{\Omega} u(t,x) + v(t,x) \diff x
$$
remains constant. Such equations have been used to model biological and chemical phenomena including cell polarity regularization (assymetric organization of cellular structures) \cite{MR2367252} and they received a lot of mathematical attention \cite{MR2646071,MR3071451} in particular for their pattern formation ability related to Turing instability \cite{morita2016reaction,MR3908864}. Moreover, systems with one non-diffusive component are widely studied in the literature, serving as models for early carcinogenesis \cite{MR3039206} and also for pattern formation \cite{MR3329327, MR3600397}. \\

\noindent  Our interest lies in the so-called fast reaction limit corresponding to $\varepsilon \to 0$.   By now, this problem is fairly classical assuming that reaction function $F$ is monotone \cite{MR2009623}. In this spirit, fast reaction limits have been studied for a great variety of reaction-diffusion systems, also with more than two components \cite{MR3005532, MR4040718,MR2776460} or reaction-diffusion equation coupled with an ODE \cite{MR3655798}. They usually lead to the cross-diffusion systems where the gradient of one quantity induces a flux of another one \cite{MR2251792}, a phenomena that is non-negligible for instance in chemistry \cite{vanag2009cross} and is constantly studied from the mathematical point of view, see \cite{MR4051984,MR3783102} and references therein. A slightly different type of problem deals with the fast-reaction limit for irreversible reactions which leads to free boundary problems  \cite{MR3501846,MR589954}.
We refer the reader to \cite{MR3905643,murakawa2019fast} and references therein for further details and another limits in reaction-diffusion systems. \\

\noindent To focus our attention, we consider functions $F$ with particular monotonicity profile as plotted in Fig.~\ref{plot:Fd}. Our first result asserts that, up to a subsequence,
\begin{equation}\label{eq:our_main_result}
F(\ueps), \; \veps \to v \mbox{ in } L^2((0,T)\times\Omega), \qquad \qquad \ueps \weaks u:= \sum_{i=1}^3 \lambda_i \, S_i(v) \mbox{ in } L^{\infty}((0,T)\times\Omega),
\end{equation}
where the weights $\lambda_1(t,x)$, $\lambda_2(t,x)$ and $\lambda_3(t,x)$ are nonnegative numbers such that $\sum_{i=1}^3 \lambda_i = 1$ while $S_1$, $S_2$ and $S_3$ are three possible inverses of $F$ defined in Notation \ref{intro:not_inv_phi}, cf. Fig.~\ref{plot:Fd}. Our main result is however to derive a kinetic  equation for the weights. On the one hand, strong convergence of $\veps$ is surprising as its compactness in time does not seem to be available from a priori estimates. On the other hand, weak$^*$ limit of $\ueps$ can be interpreted as the weak form of the identity $v = F(u)$ known from the classical fast reaction limits. However, in our case, mass of $u$ splits for three parts assosciated to the preimages of $v$ under the map $F$. This intuition is made more precise in Theorem
~\ref{thm_main_id_YM} using the language of Young measures.\\
\begin{figure}
\begin{tikzpicture}

\draw[line width=0.4mm,->] (0,0) -- (8.5,0) node[anchor=north west] {$u$};
\draw[line width=0.4mm,->] (0,0) -- (0,5) node[anchor=south east] {$F(u)$};

\draw [black] plot [smooth, tension=1] coordinates { (0,0.0) (0.45,1.5) (2,3.0) (4,1.5) (6,3.0) (7.5, 4.5)};

\draw [dashed] (0,3) -- (8.5,3);
\draw [dashed] (0,1.5) -- (8.5,1.5);
\node at (-0.3, 3) {$f_{+}$};
\node at (-0.3, 1.5) {$f_{-}$};

\node at (0.45,-0.5) {$\alpha_{-}$};
\node at (2,-0.5) {$\alpha_{+}$};
\node at (4,-0.5) {$\beta_{-}$};
\node at (6,-0.5) {$\beta_{+}$};
\draw (0.45,-0.2) -- (0.45,0.2);
\draw (2,-0.2) -- (2,0.2);
\draw (4,-0.2) -- (4,0.2);
\draw (6,-0.2) -- (6,0.2);

\end{tikzpicture}
\vspace{-4mm}
\caption{Plot of a typical function $F$. It is strictly increasing in intervals $(-\infty, \alpha_{+}) \cup (\beta_{-}, \infty)$ and strictly decreasing in $(\alpha_{+}, \beta_{-})$. For $r \in (f_{-}, f_{+})$, the function $F$ is not invertible and equation $F(u) = r$ has three roots $s = S_1(r)\leq S_2(r) \leq S_3(r)$.}
\label{plot:Fd}
\end{figure}
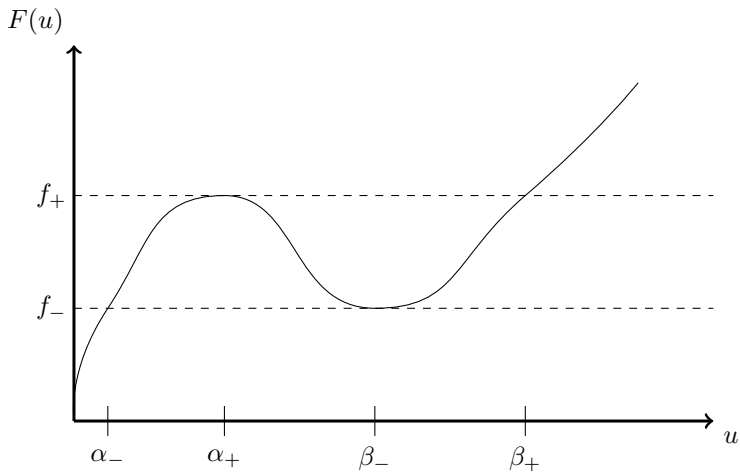

\noindent Our strategy to prove \eqref{eq:our_main_result} is to combine ideas from kinetic formulations of PDEs \cite{MR2064166,MR1099693} and from the insightful work of Plotnikov \cite{MR1299852} (see also \cite{MR2103092, MR1015926,MR1620640, MR2429862} for similar problems). He considered the following regularization:
\begin{equation}\label{intro:regularizing_seq}
\partial_t w^{\varepsilon} = \Delta A(w^{\varepsilon}) + \varepsilon\, \Delta (\partial_tw^{\varepsilon})
\end{equation}
of the ill-posed problem $\partial_t w = \Delta A(w)$ where $A$ is assumed to have a similar monotonicity profile as in Fig.~\ref{plot:Fd}. Plotnikov studied the limit of $w^{\varepsilon}$ as $\varepsilon \to 0$. Using the theory of Young measures, he was able to predict oscillations in the limit and obtain the similar characterization of the limit $w$ as $u$ in our result \eqref{eq:our_main_result}. We comment more on connection between our work and Plotnikov paper in Section \ref{sect:connection_with_Plotnikov}. We remark that analysis of $\partial_t \weps = \Delta A(\weps)$ with non-necessarily monotone function $A$ is constantly receiving attention in mathematical community \cite{MR3466547,MR2103092,MR2563628,MR2765690}.\\
 
\noindent Unlike the original work of Plotnikov, in the process of limit identification, we exploit kinetic formulation. This is a well-known concept for scalar conservation laws \cite{MR1201239,MR2064166} that brought some connections with kinetic equations \cite{MR1099693} and degenerate parabolic equations \cite{MR1981403,GST2019}. Although this is an approach equivalent with Young measures, working directly on functions is simpler as limit identification is based on a certain functional identity cf. Theorem
~\ref{thm:analytic_char_result}. This approach results in a PDE satisfied by the kinetic functions cf.~\eqref{thm:2ndPDE_kinfunct} which provides some information on evolution of weights $\lambda_i$ in \eqref{eq:our_main_result}, cf. Section \ref{sect:diff_ineq_weights}. We comment more on connections between our work and Plotnikov's paper in Section \ref{sect:connection_with_Plotnikov}. \\

\noindent In this paper, we discuss the limit of \eqref{system_1}--\eqref{system_2} when $\varepsilon \to 0$. First, we present a priori estimates (Section \ref{sect:aprioriest}). Then, in Section \ref{sect:main_kin_form}, we introduce kinetic formulation which allows to prove \eqref{eq:our_main_result} in Section \ref{sect:prf_main_result}. In Section \ref{sect:diff_ineq_weights}, we use kinetic formulation to derive some formal differential equations for coefficients $\lambda_i$ in \eqref{sect:diff_ineq_weights}. The two last subsections are devoted to discuss how our work is related to the Plotnikov's paper and present some open problems in the field. \\

\noindent We list the main novelties of our work below.
\begin{itemize}
    \item We rewrite Plotnikov's method in terms of kinetic functions and identify limits of $\ueps$ and $\veps$ in system \eqref{system_1}--\eqref{system_2}.
    \item   We establish the PDE \eqref{eq:PDE2_sat_by_kin_LIMIT} satisfied by limiting kinetic functions. The latter can be viewed as a weak formulation for equation \eqref{eq:ODE_combined_noloc} satisfied, when $v$ is smooth, by weights $\lambda_1$, $\lambda_2$ and $\lambda_3$, see Section \ref{sect:diff_ineq_weights}.
    \item We modify Plotnikov's method by exploiting natural energy for \eqref{system_1}--\eqref{system_2} rather than Plotnikov's variables. Therefore, we do not need to assume $F'(u)>-1$, see Section \ref{sect:connection_with_Plotnikov}.
\item We relax the nondegeneracy condition on the nonlinearity $F$, see \eqref{ass:nondeg} in Assumption~\ref{ass:nonlinearity}.
\end{itemize}

\section{Assumptions and the main results}

\noindent Before we start, let us precisely formulate our assumptions and notations for inverses of function $F$.
\begin{Nott}\label{intro:not_inv_phi}
Let $S_1(\lambda) \leq S_2(\lambda) \leq S_3(\lambda)$ be the solutions of equation $F(S_i(\lambda)) = \lambda$ as already introduced in \eqref{ass:nondeg} in Assumption \ref{ass:nonlinearity} (see Fig.~\ref{plot:Fd}). These are inverses of $F$ satisfying
$$
S_1:(-\infty,f_{+}) \to (-\infty,\alpha_{+}), \qquad S_2:(f_{-},f_{+}) \to (\alpha_{+},\beta_{-}), \qquad S_3:(f_{-},\infty) \to (\beta_{-},\infty).
$$
Their role is too focus analysis on parts of the plot of $F$ where the monotonicity of $F$ does not change. By a small abuse of notation, we extend functions $S_i$ by a constant value to the whole of~$\R$. We usually write, for images of functions $S_1$, $S_2$, $S_3$ and for their domains
$$
I_1 = (-\infty,\alpha_{+}), \qquad \qquad I_2 = (\alpha_{+},\beta_{-}), \qquad \qquad I_3 = (\beta_{-}, \infty),
$$
$$
J_1 = (-\infty,f_{+}), \qquad \qquad J_2 = (f_{-},f_{+}), \qquad \qquad J_3 = (f_{-}, \infty) .
$$
\end{Nott}
\begin{Ass}[Reaction function $F$] \label{ass:nonlinearity}
We assume that the function $F(u)$ satisfies:
\begin{enumerate}
\item Regularity,  nonnegativity: $F \in C^1\big(\R; [0,\infty) \big)$, with $F(0) = 0$.
\item \label{ass:mon} Piecewise monotonicity of $F$: there are $\alpha_{-}<\alpha_{+}<\beta_{-} < \beta_{+}$ such that $F(\beta_{-}) = F(\alpha_{-})$, $F(\alpha_+) = F(\beta_+)$, $F$ is strictly increasing on $(-\infty,\alpha_+) \cup (\beta_{-}, \infty)$ and strictly decreasing on $(\alpha_{+}, \beta_{-})$ (see Fig.~ \ref{plot:Fd}). Moreover, $\lim_{u\to \infty} F(u) = \infty$.
\item \label{ass:nondeg} Nondegeneracy: 
in all subintervals of $(f_-,f_+)$, the vanishing linear combination $$\sum_{i=1}^3 a_i\;  \big( S_1'(r)+1\big) =0 $$ implies $a_1 +a_2+a_3=0$. 
\end{enumerate}
\end{Ass}
\noindent Let us comment on the nondegeneracy condition \eqref{ass:nondeg} that is by no means an innocent assumption. For instance, it holds true if the functions $1 + S_1'(r), 1 + S_2'(r),1 + S_3'(r)$ are linearly independent in each subinterval of $(f_-,f_+)$ which was the original assumption made by Plotnikov \cite{MR1299852}. On the other hand, this condition excludes piecewise linear functions $F$. Nevertheless, it is usually made in this type of problems \cite{MR657784, MR1015926,MR1299852}. For sufficiently smooth functions, a typical approach to check nondegenracy condition is computing Wronskian of these functions \cite[Section 1.3]{MR1721985}, see \cite[Proposition 2]{MR1620640} for a particular example. We list it as a one of the open problems in Section
~\ref{sect:open_problems} to relax the nondegeneracy condition.\\

\noindent A first result of this paper reads:
\begin{thm}[Limits for $\veps, \; \ueps$]\label{thm_main_id_YM}
Let $T>0$ and $(\ueps, \veps)$ be the solution of \eqref{system_1}--\eqref{system_2}. Then, up to a subsequence, $\ueps \wstar u$ weakly$^*$ in $L^{\infty}((0,T)\times\Omega)$ and $F(\ueps), \; \veps \to v$ strongly in $L^2((0,T)\times\Omega)$. Moreover, the Young measure generated by $\{\ueps\}_{\varepsilon \in (0,1)}$ is a convex combination of Dirac masses
\begin{equation}\label{eq:repr_main_result}
\mu_{t,x} = \lambda_1(t,x)\,\delta_{S_1(v(t,x))} + \lambda_2(t,x)\,\delta_{S_2(v(t,x))} + \lambda_3(t,x)\,\delta_{S_3(v(t,x))},
\end{equation}
where $S_1$, $S_2$ and $S_3$ are the inverses of $F$ defined in Notation \ref{intro:not_inv_phi} while $\lambda_1$, $\lambda_2$, $\lambda_3$ are nonnegative numbers such that $\sum_{i=1}^3 \lambda_i~=~1$.
\end{thm}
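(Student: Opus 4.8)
The plan is to separate the statement into a soft part---the three-atom structure \eqref{eq:repr_main_result} of the Young measure, which is a formal consequence of strong convergence of $F(\ueps)$---and the genuinely hard part, the extraction of that strong convergence from the equations. \emph{First}, I would collect the a priori estimates of Section~\ref{sect:aprioriest}: the uniform (in $\varepsilon$) bounds $0\le\ueps,\veps\le C$, which follow from a maximum-principle / invariant-rectangle argument (since $F(0)=0$ and $F(a)=\max_{[0,a]}F$ for $a$ large, the rectangle $\{0\le u\le a,\ 0\le v\le F(a)\}$ is invariant for the partially diffusive system), together with the natural energy identity obtained by multiplying \eqref{system_2} by $\veps$ and \eqref{system_1} by $F(\ueps)$, adding, and using $\veps-F(\ueps)=\varepsilon\,\partial_t\ueps$:
\[
\frac{d}{dt}\int_\Omega\Big(\tfrac12\veps^2+\Phi(\ueps)\Big)\diff x+\int_\Omega|\nabla\veps|^2\diff x+\varepsilon\int_\Omega(\partial_t\ueps)^2\diff x=0,\qquad \Phi(u):=\int_0^uF(s)\,\diff s\ge0 .
\]
Integrating in time and using $\Phi\ge0$ yields a uniform bound on $\nabla\veps$ in $L^2((0,T)\times\Omega)$ and, crucially, $\|\veps-F(\ueps)\|_{L^2((0,T)\times\Omega)}^2=\varepsilon^2\!\int_0^T\!\!\int_\Omega(\partial_t\ueps)^2\le\varepsilon\,\big(\tfrac12\|v_0\|_{L^2}^2+\int_\Omega\Phi(u_0)\big)\to0$. (Using this energy rather than Plotnikov's change of variables is what avoids the hypothesis $F'>-1$.)

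\emph{Next}, I would pass to a subsequence so that $\ueps\wstar u$ in $L^\infty((0,T)\times\Omega)$ with $\{\ueps\}$ generating a Young measure $(\mu_{t,x})$ of probability measures with uniformly bounded supports, and $\veps\weak v$ in $L^2(0,T;H^1(\Omega))$, $\veps\wstar v$ in $L^\infty$. By the previous step $F(\ueps)\weak v$ in $L^2$, so the fundamental theorem of Young measures gives $v(t,x)=\int_\R F\,\diff\mu_{t,x}$ and $u(t,x)=\int_\R\xi\,\diff\mu_{t,x}$ a.e. At this point the whole theorem reduces to the single claim
\[
\int_\R F(\xi)^2\,\diff\mu_{t,x}(\xi)=v(t,x)^2\quad\text{for a.e.\ }(t,x),\qquad\text{i.e.}\qquad F(\ueps)\to v\ \text{strongly in }L^2 .
\]
Granting it, the soft part follows: $\veps=F(\ueps)+(\veps-F(\ueps))\to v$ strongly in $L^2$; and strong convergence of $F(\ueps)$ means its Young measure is $\delta_{v(t,x)}$, which, being the push-forward $F_\#\mu_{t,x}$, forces $\mu_{t,x}\big(F^{-1}(\{v(t,x)\})\big)=1$; since $F^{-1}(\{v\})\subseteq\{S_1(v),S_2(v),S_3(v)\}$ with the constant extension of the $S_i$ from Notation~\ref{intro:not_inv_phi}, $\mu_{t,x}$ has the form \eqref{eq:repr_main_result}, and $\lambda_1:=\mu_{t,x}\big((-\infty,\alpha_+]\big)$, $\lambda_2:=\mu_{t,x}\big((\alpha_+,\beta_-]\big)$, $\lambda_3:=\mu_{t,x}\big((\beta_-,\infty)\big)$ are nonnegative, measurable and sum to one (a short check according to whether $v(t,x)$ lies in $(f_-,f_+)$ or outside confirms that this reproduces $\mu_{t,x}$). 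Since Jensen's inequality always gives $\int F^2\,\diff\mu_{t,x}\ge\big(\int F\,\diff\mu_{t,x}\big)^2=v^2$, the content of the claim is only the reverse inequality---the absence of an oscillation defect in $F(\ueps)$.

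\emph{The remaining step, which is the main obstacle and the core of the paper}, is this last claim. The difficulty is that time-compactness of $\veps$ is not available from the a priori bounds---$\partial_t\ueps$ is controlled only by $\varepsilon^{-1/2}$ in $L^2$, so no Aubin--Lions argument closes---so the strong convergence has to be produced \emph{a posteriori}, as a by-product of identifying $\mu_{t,x}$. Following Section~\ref{sect:main_kin_form}, I would: (i) attach to $\ueps$ its kinetic function (a characteristic function in an auxiliary variable $\xi$) and derive from \eqref{system_1}--\eqref{system_2} the transport-type equation it satisfies, whose defect term is controlled by $\varepsilon\int(\partial_t\ueps)^2$; (ii) pass to the limit $\varepsilon\to0$, using the Young measure, to obtain the limiting kinetic equation \eqref{eq:PDE2_sat_by_kin_LIMIT}; (iii) invoke the functional characterization of Theorem~\ref{thm:analytic_char_result}---and here the nondegeneracy condition~\eqref{ass:nondeg} of Assumption~\ref{ass:nonlinearity} is essential---to force the limiting kinetic function to be a genuine characteristic function, equivalently $\mu_{t,x}$ to be concentrated on a single level set of $F$, which is exactly $\int F^2\,\diff\mu_{t,x}=v^2$. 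Step (iii), the Plotnikov-type heart of the argument, is where I expect essentially all the work to lie.
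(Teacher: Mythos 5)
Your proposal is correct and follows essentially the same route as the paper: the energy identity giving $\|\veps-F(\ueps)\|_{L^2}^2\lesssim\varepsilon$, the kinetic formulation culminating in the nondegeneracy-based characterization of Theorem \ref{thm:analytic_char_result} for the strong convergence of $\veps$, and then concentration of $\mu_{t,x}$ on the preimages $S_i(v)$. Your phrasing of the last step via the push-forward identity $F^{\#}\mu_{t,x}=\delta_{v}$ (and the invariant-rectangle derivation of the $L^\infty$ bound) is only a cosmetic variant of the paper's argument through the kinetic functions $p$, $q$ and Lemma \ref{lem:another_char_of_nu}.
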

\noindent The proof is presented in Section~\ref{sect:prf_main_result}. Loosely speaking, representation~\eqref{eq:repr_main_result} means that for small values of $\varepsilon$, the function $u(t,x)$ should oscillate between (at most) three values. This is observed by  numerical simulations in Fig.~\ref{fig:discont}. In fact the unstable state is reached only during the transient.\\

\noindent The connection between $u$ and $v$ in Theorem \ref{thm_main_id_YM} is formulated in the language of Young measures and reader not familiar with this topic is referred to \cite{MR1034481} for a concise introduction with applications. Briefly speaking, Young measures allow to represent weak limits of nonlinear functions. More precisely, let $\{\mu_{t,x}\}_{t,x}$ and $\{\nu_{t,x}\}_{t,x}$ be the Young measures generated by sequences $\{\ueps\}_{\varepsilon \in (0,1)}$ and $\{\veps\}_{\varepsilon \in (0,1)}$ respectively. Then, for any bounded function $G: \R \to \R$ we have (up to a subsequence and for a.e. $(t,x) \in (0,T)\times \Omega$)
$$
G(\ueps) \wstar \int_{\R} G(\lambda) \diff \mu_{t,x}(\lambda) := \langle G, \mu_{t,x} \rangle, \qquad G(\veps) \wstar \int_{\R} G(\lambda) \diff \nu_{t,x}(\lambda) := \langle G, \nu_{t,x} \rangle.
$$
The proof of Theorem \ref{thm_main_id_YM} goes as follows. One rewrites equations \eqref{system_1}--\eqref{system_2} in terms of kinetic functions. Using compensated compactness \cite{MR894077,MR584398}, we obtain Lemma~\ref{lem:compactness_product} and the functional identity for kinetic functions~\eqref{thmeq:main_identity} from which we deduce the kinetic function shape for $v(t,x)$ in Section~\ref{sect:prf_main_result}. This implies that the Young measure generated by $\{\veps\}_{\varepsilon \in (0,1)}$ is a Dirac mass which proves
~\eqref{eq:repr_main_result}.
\\
\noindent The crucial step in the proof of Theorem~\ref{thm_main_id_YM}, which is a new result by its own, is a PDE satisfied by the kinetic functions generated by $\{\ueps\}_{\varepsilon \in (0,1)}$ and $\{\veps\}_{\varepsilon \in (0,1)}$. 

\begin{thm}[Kinetic PDE]\label{thm:intro_sec_PDE_kin_fun}
Let $p, q: (0,T)\times \Omega \times \R \to \R$ be the $L^\infty$-weak$^*$ limits (up to extraction of subsequences) as below:
$$
p(t,x,\xi) = \wstlim_{\varepsilon\to 0}\, \mathds{1}_{0\leq \xi \leq \ueps(t,x)}(\xi), \qquad \qquad q(t,x,\xi) = \wstlim_{\varepsilon\to 0}\, \mathds{1}_{0\leq \xi \leq \veps(t,x)}(\xi).
$$
Then, there is a bounded nonnegative measure $n$ on $(0,T)\times\Omega\times\R$ such that equation
\begin{equation}\label{eq:PDE2_sat_by_kin_LIMIT_intro}
\partial_t \left[\int_{\R} p(t,x,\lambda) \, \delta_{\xi = F(\lambda)}(\xi) \diff \lambda +  q(t,x,\xi)\right] - \Delta_x q(t,x,\xi) =  \partial_{\xi}\, {n}(t,x,\xi)
\end{equation}
holds in the sense of distributions.
\end{thm}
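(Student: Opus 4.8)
The plan is to write a kinetic (level-set) formulation of each of \eqref{system_1}--\eqref{system_2}, add the two formulations after composing the equation for $\ueps$ with $F$, and then pass to the limit $\varepsilon\to0$. The point of the construction is that the two stiff reaction terms, each of order $1/\varepsilon$, will recombine into $\partial_\xi$ of a nonnegative measure that remains bounded thanks to the energy dissipation.

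\emph{Kinetic indicators and chain rule.} First I would set $f^\varepsilon(t,x,\xi)=\mathds{1}_{0\le\xi\le\ueps(t,x)}$, $g^\varepsilon(t,x,\xi)=\mathds{1}_{0\le\xi\le\veps(t,x)}$ and introduce the ``$F$--transported'' function
\[
h^\varepsilon(t,x,\xi):=\int_{\R} f^\varepsilon(t,x,\lambda)\,\delta_{\xi=F(\lambda)}(\xi)\,\diff\lambda .
\]
Since $(\ueps,\veps)$ is a classical solution (Theorem~\ref{theorem:well_posed}), testing against $\psi\in C_c^\infty\big((0,T)\times\Omega\times\R\big)$ and using Fubini together with the ordinary chain rule gives, in $\mathcal D'$, the identities $\partial_t g^\varepsilon=\delta_{\xi=\veps}\,\partial_t\veps$, $\partial_{x_i}g^\varepsilon=\delta_{\xi=\veps}\,\partial_{x_i}\veps$ and $\partial_t h^\varepsilon=\delta_{\xi=F(\ueps)}\,\partial_t\ueps$; differentiating the spatial identity once more yields the defect form of the diffusion term,
\[
\delta_{\xi=\veps}\,\Delta_x\veps=\Delta_x g^\varepsilon+\partial_\xi\big(|\nabla_x\veps|^2\,\delta_{\xi=\veps}\big).
\]
Substituting \eqref{system_1}--\eqref{system_2} I then obtain
\[
\partial_t h^\varepsilon=\frac{\veps-F(\ueps)}{\varepsilon}\,\delta_{\xi=F(\ueps)},\qquad
\partial_t g^\varepsilon-\Delta_x g^\varepsilon=\frac{F(\ueps)-\veps}{\varepsilon}\,\delta_{\xi=\veps}+\partial_\xi\big(|\nabla_x\veps|^2\,\delta_{\xi=\veps}\big).
\]

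\emph{Adding, cancellation, and uniform bounds.} Summing the two displays and using the elementary identity $\delta_{\xi=a}-\delta_{\xi=b}=\partial_\xi\big(\mathrm{sign}(b-a)\,\mathds{1}_{(\min(a,b),\,\max(a,b))}(\xi)\big)$ with $a=F(\ueps)$, $b=\veps$, the stiff contributions collapse to a derivative of a sign-definite object:
\[
\frac{\veps-F(\ueps)}{\varepsilon}\big(\delta_{\xi=F(\ueps)}-\delta_{\xi=\veps}\big)=\partial_\xi\, m^\varepsilon,\qquad m^\varepsilon:=\frac{|F(\ueps)-\veps|}{\varepsilon}\,\mathds{1}_{(\min(F(\ueps),\veps),\,\max(F(\ueps),\veps))}(\xi)\ge0 .
\]
Hence, with $n^\varepsilon:=|\nabla_x\veps|^2\,\delta_{\xi=\veps}+m^\varepsilon\ge0$, I get $\partial_t(h^\varepsilon+g^\varepsilon)-\Delta_x g^\varepsilon=\partial_\xi n^\varepsilon$ in $\mathcal D'$. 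Integrating in $\xi$ gives $\int n^\varepsilon=\|\nabla_x\veps\|_{L^2((0,T)\times\Omega)}^2+\tfrac1\varepsilon\|F(\ueps)-\veps\|_{L^2((0,T)\times\Omega)}^2$, which is bounded uniformly in $\varepsilon$: this is precisely the energy identity $\frac{d}{dt}\int_\Omega\big(\int_0^{\ueps}F(r)\,\diff r+\tfrac12\veps^2\big)+\int_\Omega|\nabla_x\veps|^2+\tfrac1\varepsilon\int_\Omega(F(\ueps)-\veps)^2=0$ obtained by testing \eqref{system_1} with $F(\ueps)$, \eqref{system_2} with $\veps$ and adding (see Section~\ref{sect:aprioriest}). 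The $L^\infty$ bounds on $\ueps,\veps$ confine the $\xi$-support of $n^\varepsilon$ to a fixed compact set, so $\{n^\varepsilon\}$ is bounded in $\M$ and, along a subsequence, $n^\varepsilon\weaks n$ for a bounded nonnegative measure $n$.

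\emph{Passage to the limit and the main obstacle.} Along a common subsequence, $f^\varepsilon\weaks p$ and $g^\varepsilon\weaks q$ in $L^\infty$--weak$^*$, and then $h^\varepsilon\weaks\int_\R p(t,x,\lambda)\,\delta_{\xi=F(\lambda)}(\xi)\,\diff\lambda$: indeed $\langle h^\varepsilon,\psi\rangle=\iiint f^\varepsilon(t,x,\lambda)\,\psi(t,x,F(\lambda))\,\diff\lambda\diff x\diff t$, and since $f^\varepsilon$ is supported in the fixed compact $\lambda$--range $[0,\sup_\varepsilon\|\ueps\|_\infty]$ and $F$ is continuous, $(t,x,\lambda)\mapsto\psi(t,x,F(\lambda))$ is an admissible $L^1$ test function there. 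As distributional derivatives commute with weak-$^*$ limits, passing to the limit in $\partial_t(h^\varepsilon+g^\varepsilon)-\Delta_x g^\varepsilon=\partial_\xi n^\varepsilon$ gives exactly \eqref{eq:PDE2_sat_by_kin_LIMIT_intro}. The bookkeeping with distributional products and the chain rule is routine; the content, and the step requiring care, is twofold — extracting the defect measure $|\nabla_x\veps|^2\delta_{\xi=\veps}$ from the Laplacian, and noticing that the two individually divergent $O(1/\varepsilon)$ reaction terms combine into $\partial_\xi m^\varepsilon$ with $m^\varepsilon\ge0$ and $\int m^\varepsilon=\tfrac1\varepsilon\|F(\ueps)-\veps\|_{L^2}^2$ equal to the quantity dissipated by the natural energy. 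This is exactly why one composes the $\ueps$--equation with $F$ before adding, rather than working with $\mathds{1}_{0\le\xi\le F(\ueps)}$ directly.
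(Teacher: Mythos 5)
Your proposal is correct and is essentially the paper's own argument in kinetic (dual) form: the paper multiplies \eqref{system_1} by $\phi(F(\ueps))$ and \eqref{system_2} by $\phi(\veps)$, sums to get \eqref{eq:PDE_for_kinetic_function_withTF}, and reads off \eqref{eq:PDE2_sat_by_kin_seq} with $n^\varepsilon=n_1^\varepsilon+n_2^\varepsilon$, which is exactly your computation with test functions made explicit. In fact your defect measure coincides with the paper's, since $\frac{(\veps-F(\ueps))^2}{\varepsilon}\int_0^1\delta_{s\veps+(1-s)F(\ueps)}(\xi)\diff s=\frac{|\veps-F(\ueps)|}{\varepsilon}\,\mathds{1}_{(\min(F(\ueps),\veps),\max(F(\ueps),\veps))}(\xi)$, and the uniform bound and weak$^*$ passage to the limit are identical.
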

\noindent Theorem \ref{thm:intro_sec_PDE_kin_fun} is proved in Section \ref{sect:intro_to_knetic_func} (part of Theorem \ref{thm:2ndPDE_kinfunct}). As a consequence, we can formulate equations for evolution of weights $\{\lambda_i(t,x)\}_{i=1,2,3}$ proved in Secton \ref{sect:diff_ineq_weights}. 
\begin{thm}[Equations for weights]\label{thm:intro_eqns_weights}
Let $\{\lambda_i(t,x)\}_{i=1,2,3}$ be as in \eqref{eq:repr_main_result}. We set
$$
\kappa_1(t,x) = 1 - \lambda_1(t,x), \qquad \qquad \kappa_2(t,x) = \lambda_3(t,x).
$$ 
\begin{enumerate}
    \item\label{thm:part1_eq_weights} Suppose additionally that sequences $\{\partial_t \veps\}_{\varepsilon \in (0,1)}$ and $\{\Delta \veps\}_{\varepsilon \in (0,1)}$ are uniformly bounded in $L^2((0,T)\times\Omega)$. Then, we have $\partial_t \lambda_i(t,x) = 0$ for $i=1,2,3$ and $(t,x) \in \mathcal{O}$ where $\mathcal{O}\subset (0,T)\times\Omega$ is any open set where $v(t,x)$ is continuous. In particular, no splitting of mass may occur.
    \item\label{thm:part2_eq_weights} In general, if $$\mathcal{O} \subset \{(t,x,\xi): f_{-}< v(t,x) < \xi_0 < \xi < f_{+} \}
    $$
    is an open set for some $\xi_0$, we have for $(t,x,\xi) \in \mathcal{O}$
    \begin{equation}\label{eq:intro_thm_PDE_weigts_1}
    \partial_t \int_{S_1(v(t,x))}^{S_2(v(t,x))} \kappa_1(t,x) \, \delta_{\xi = F(\lambda)}(\xi) \diff \lambda  =  \partial_{\xi} {n}(t,x,\xi),
    \end{equation}
    where $n$ is a nonnegative measure from Theorem \ref{thm:intro_sec_PDE_kin_fun}. Similarly, if 
    $$\mathcal{O} \subset \{(t,x,\xi): f_{-} < \xi < \xi_0 < v(t,x) < f_{+} \} 
    $$
    is an open set for some $\xi_0$, we have for $(t,x,\xi) \in \mathcal{O}$
    \begin{equation}\label{eq:intro_thm_PDE_weigts_2}
    \partial_t \int_{0}^{S_1(v(t,x))}  \delta_{\xi = F(\lambda)}(\xi) \diff \lambda  + \partial_t \int_{S_2(v(t,x))}^{S_3(v(t,x))} \kappa_2(t,x) \, \delta_{\xi = F(\lambda)}(\xi) \diff \lambda = \partial_{\xi} {n}(t,x,\xi).
    \end{equation}
\end{enumerate}
\end{thm}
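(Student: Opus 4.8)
The plan is to derive Theorem~\ref{thm:intro_eqns_weights} as a direct specialization of the kinetic PDE \eqref{eq:PDE2_sat_by_kin_LIMIT_intro} obtained in Theorem~\ref{thm:intro_sec_PDE_kin_fun}, combined with the explicit form of the Young measure \eqref{eq:repr_main_result} established in Theorem~\ref{thm_main_id_YM}. The first step is to compute the kinetic limits $p$ and $q$ explicitly in terms of the weights. Since $\veps \to v$ strongly in $L^2$, the Young measure of $\{\veps\}$ is the Dirac mass $\delta_{v(t,x)}$, so $q(t,x,\xi) = \mathds{1}_{0 \leq \xi \leq v(t,x)}$ is the genuine kinetic function of $v$. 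For $p$, using \eqref{eq:repr_main_result} and the fact that $S_1(v) \leq S_2(v) \leq S_3(v)$ with the $\lambda_i$ summing to one, one gets
\begin{equation*}
p(t,x,\lambda) = \int_{\R}\mathds{1}_{0\leq\lambda\leq s}\,\diff\mu_{t,x}(s) = \mathds{1}_{0\leq\lambda\leq S_1(v)} + \kappa_1(t,x)\,\mathds{1}_{S_1(v) < \lambda \leq S_2(v)} + \kappa_2(t,x)\,\mathds{1}_{S_2(v) < \lambda \leq S_3(v)},
\end{equation*}
where I used $\kappa_1 = \lambda_2 + \lambda_3 = 1 - \lambda_1$ and $\kappa_2 = \lambda_3$ (and for this identity one needs $S_i(v) \geq 0$, which follows from nonnegativity and $F(0)=0$). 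Substituting this decomposition of $p$ into the pushforward term $\int_\R p(t,x,\lambda)\,\delta_{\xi=F(\lambda)}(\xi)\diff\lambda$ of \eqref{eq:PDE2_sat_by_kin_LIMIT_intro} is the computational heart of the argument.

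For part~\eqref{thm:part1_eq_weights}, the extra hypothesis that $\{\partial_t\veps\}$ and $\{\Delta\veps\}$ are bounded in $L^2$ forces, on the open set $\mathcal O$ where $v$ is continuous, strong (or at least a.e.) convergence of $\veps$ together with enough regularity that $q$ solves the \emph{linear} heat equation $\partial_t q - \Delta_x q = \partial_\xi m$ for a measure $m$ supported on $\xi = v(t,x)$ with no defect — more precisely, the measure $n$ in \eqref{eq:PDE2_sat_by_kin_LIMIT_intro} reduces to the standard kinetic defect measure of the heat equation, and the pushforward term $\int_\R p\,\delta_{\xi=F(\lambda)}\diff\lambda$ must separately satisfy a transport-type identity. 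Testing \eqref{eq:PDE2_sat_by_kin_LIMIT_intro} against functions of the form $\psi(t,x)\,\varphi(\xi)$ with $\varphi$ supported away from $v(t,x)$ and using that $q$ and its heat-equation defect vanish there, one isolates $\partial_t\big[\int_\R p\,\delta_{\xi=F(\lambda)}\diff\lambda\big] = \partial_\xi n$ on $\mathcal O\times\R$; a further localization in $\xi$ to the ranges $F((S_1(v),S_2(v)))$ and $F((S_2(v),S_3(v)))$ — which on these intervals collapse because $F$ is strictly monotone there — peels off $\partial_t\kappa_1$ and $\partial_t\kappa_2$ individually. One then argues that a nonnegative measure $n$ whose $\xi$-derivative equals a pure time derivative of a function of bounded variation localized on a moving curve must vanish, yielding $\partial_t\kappa_1 = \partial_t\kappa_2 = 0$, hence $\partial_t\lambda_i = 0$. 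The nondegeneracy assumption \eqref{ass:nondeg} is what guarantees that this "separation of kinetic modes'' can actually be performed — i.e. that the three pieces of $p$ do not conspire to cancel.

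Part~\eqref{thm:part2_eq_weights} is the computation without the extra regularity: one takes \eqref{eq:PDE2_sat_by_kin_LIMIT_intro}, restricts to the stated wedge-shaped open sets $\mathcal O$ in $(t,x,\xi)$, and reads off which terms of the decomposed $p$ and of $q$ survive. On $\mathcal O \subset \{f_- < v(t,x) < \xi_0 < \xi < f_+\}$ the function $q(t,x,\xi) = \mathds{1}_{\xi \leq v}$ vanishes (as does $\Delta_x q$), the term $\mathds{1}_{0\leq\lambda\leq S_1(v)}$ pushed forward lives on $\xi \leq F(S_1(v)) = v < \xi_0$ hence contributes nothing on $\mathcal O$, the $\kappa_2$-term lives on $\xi \in F((S_2(v),S_3(v)))$ which by the constraint $\xi < f_+$ and the geometry of $F$ (Fig.~\ref{plot:Fd}) is also excluded, leaving exactly $\partial_t\big[\int_{S_1(v)}^{S_2(v)}\kappa_1\,\delta_{\xi=F(\lambda)}\diff\lambda\big] = \partial_\xi n$, which is \eqref{eq:intro_thm_PDE_weigts_1}. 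The symmetric bookkeeping on the other wedge $\{f_- < \xi < \xi_0 < v(t,x) < f_+\}$ keeps the $\mathds{1}_{0\leq\lambda\leq S_1(v)}$-term and the $\kappa_2$-term while killing $q$ and the $\kappa_1$-term, giving \eqref{eq:intro_thm_PDE_weigts_2}.

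I expect the main obstacle to be part~\eqref{thm:part1_eq_weights}: the rigorous justification that under the $L^2$ bounds on $\partial_t\veps$ and $\Delta\veps$ the defect measure $n$ from \eqref{eq:PDE2_sat_by_kin_LIMIT_intro} contains no contribution transverse to $\xi = v(t,x)$, so that on the continuity set $\mathcal O$ the identity forces $\partial_t\kappa_j = 0$. This requires either upgrading compactness of $\veps$ in time on $\mathcal O$ (using the extra bounds to get an Aubin–Lions-type argument locally) or a careful disintegration of $n$ showing that a nonnegative measure satisfying a relation of the form $\partial_\xi n = \partial_t(\text{something supported on } \xi = F(\lambda),\ \lambda \in (S_1(v),S_2(v)))$ on an open set must be zero — and here the nondegeneracy condition \eqref{ass:nondeg}, which prevents the three kinetic branches from being linearly dependent, is exactly what makes the separation and the ensuing vanishing conclusion legitimate. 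Everything else is bookkeeping of indicator functions against the plot of $F$.
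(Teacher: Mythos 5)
Your part \eqref{thm:part2_eq_weights} is correct and is essentially the paper's own argument: insert the explicit step-function form of $p$ (plateaux $1,\kappa_1,\kappa_2$ between $S_1(v)$, $S_2(v)$, $S_3(v)$) into \eqref{eq:PDE2_sat_by_kin_LIMIT}, and on the wedge $\{f_-<v<\xi_0<\xi<f_+\}$ use that $q=\chi_{v(t,x)}\equiv 0$ there and that the pushforwards of the pieces over $[0,S_1(v)]$ and $[S_2(v),S_3(v)]$ are supported in $\{\xi\le v\}$, so only the $\kappa_1$-term survives; symmetrically on the other wedge. That bookkeeping matches the paper.

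Part \eqref{thm:part1_eq_weights}, however, has a genuine gap precisely where you flag it, and the fixes you sketch are not the right mechanism. The missing idea is elementary: by \eqref{system_2}, $\frac{F(\ueps)-\veps}{\varepsilon}=\partial_t\veps-\Delta\veps$, so the assumed $L^2$ bounds give $\|\veps-F(\ueps)\|_{L^2((0,T)\times\Omega)}\le C\varepsilon$; consequently the reaction part $n_2^{\varepsilon}$ of the defect measure defined in \eqref{eq:term_n2_measure}, whose total mass is $\varepsilon^{-1}\|\veps-F(\ueps)\|_{L^2}^2\le C^2\varepsilon$, vanishes in the limit, and $n=\delta_{\xi=v(t,x)}|\nabla v(t,x)|^2$ is carried by the graph $\xi=v(t,x)$, cf. \eqref{eq:form_of_n_special_case}. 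Once this is known, localizing in $\xi$ strictly above (resp.\ below) $v$ --- legitimate since $v$ is continuous on $\mathcal O$ and $\partial_t v\in L^2$ exists, so the chain-rule differentiation of $p$ makes sense --- removes $\partial_\xi n$, all the $\delta_{\xi=v}$ terms and the $q$-terms, and the two remaining terms are separated because the images $F((S_1(v),S_2(v)))\subset(v,f_+]$ and $F((S_2(v),S_3(v)))\subset[f_-,v)$ lie on opposite sides of $v$; this gives $\partial_t\kappa_1=0$ and $\partial_t\kappa_2=0$ outright, with no auxiliary lemma about nonnegative measures whose $\xi$-derivative is a time derivative. Two of your assertions should be corrected accordingly: the nondegeneracy condition \eqref{ass:nondeg} plays no role in this theorem (it is used only to identify $q$ in Theorem \ref{thm:analytic_char_result}; the ``separation of kinetic modes'' here is by disjointness of supports in $\xi$, not linear independence), and $F$ is not strictly monotone on $(S_1(v),S_2(v))$ or $(S_2(v),S_3(v))$ --- what matters is only that $F$ maps these intervals to opposite sides of $v$.
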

\noindent Part \eqref{thm:part1_eq_weights} of Theorem \ref{thm:intro_eqns_weights} implies that if $\ueps$ oscillates between two states in some subset, function $\veps$ should form a discontinuity there. This phenomenon is presented in Fig.~\ref{fig:discont}.\\
\begin{figure}
\subfigure{\includegraphics[width=7.5cm]{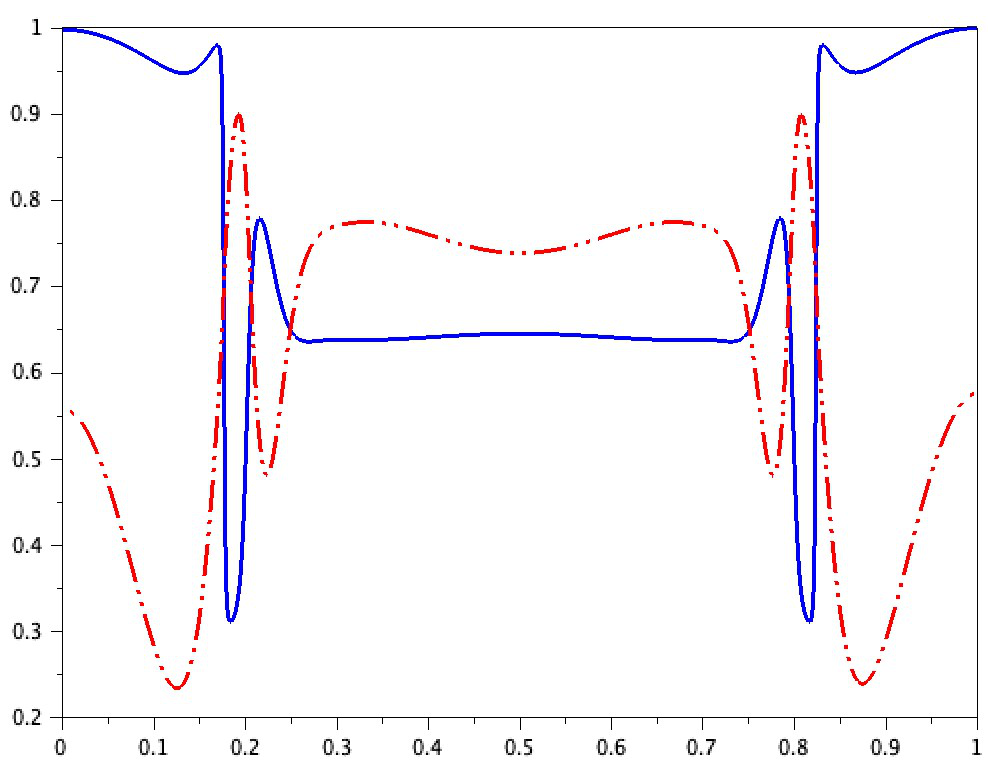}}
\subfigure{\includegraphics[width=7.5cm]{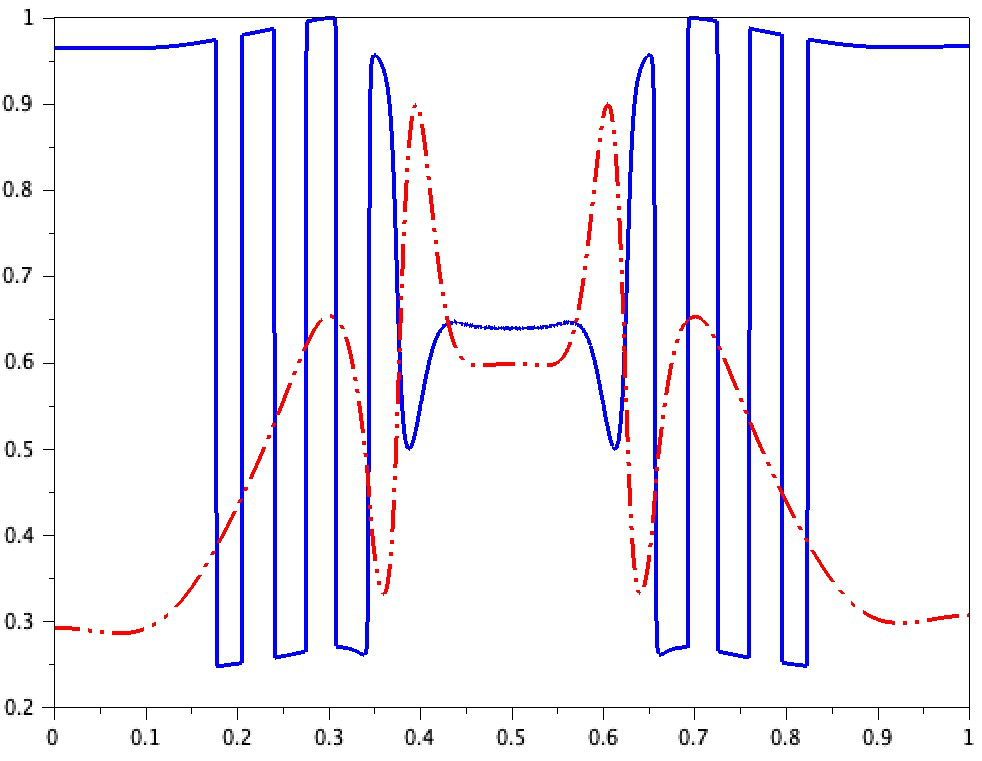}}
\vspace{0mm}
\caption{Evolution of $\ueps$ (continuous line) and $\veps$ (dash-dotted line) solving \eqref{system_1}--\eqref{system_2} in one space dimension with fixed and small value of $\varepsilon>0$. Two time shots are presented to show dependence between oscillations of $\ueps$ and $\veps$. When $\ueps$ oscillates, $\veps$ also exhibits oscillatory behaviour. However, when the weights in the equation \eqref{eq:repr_main_result} stabilize and only one of them is not vanishing, oscillations of $\veps$ disappear.}
\label{fig:discont}
\end{figure}

\noindent The main tool to prove Theorem \ref{thm:intro_sec_PDE_kin_fun} is the following energy equality. Given a smooth test function $\phi: \R \to \R$, we define
\begin{equation}\label{eq:def_of_Psi_Phi}
\Psi(\lambda) := \int_0^{\lambda} \phi(F(\tau)) \diff \tau, \qquad \qquad 
\Phi(\lambda) := \int_0^{\lambda} \phi(\tau) \diff \tau.
\end{equation}
Multiplying equation \eqref{system_1} with $\phi(F(\ueps))$ and equation \eqref{system_2} with $\phi(\veps)$ we obtain
\begin{align*}
\partial_t \Psi(\ueps) &=  \frac{\veps - F(\ueps)}{\varepsilon} \, \phi(F(\ueps)),\\
\partial_t \Phi(\veps) &= \Delta \Phi(\veps) - \phi'(\veps)\,|\nabla \veps|^2 + \frac{F(\ueps) - \veps}{\varepsilon}\, \phi(\veps).
\end{align*}
Summing up these equations we deduce
\begin{equation}\label{eq:PDE_for_kinetic_function_withTF}
\partial_t \Psi(\ueps) + \partial_t \Phi(\veps) = \Delta \Phi(\veps) - \phi'(\veps)\,|\nabla \veps|^2 - \frac{\big(\veps - F(\ueps)\big)\, (\phi(\veps) - \phi(F(\ueps))\big)}{\varepsilon}.
\end{equation}
This energy equality provides a priori estimates stated in Theorem~\ref{theorem:well_posed}, the PDE for the kinetic functions in Theorem~\ref{thm:2ndPDE_kinfunct} and compensated compactness results (Lemma \ref{lem:compactness_product}) necessary to derive the functional identity~\eqref{thmeq:main_identity}.

\section{A priori estimates}
\label{sect:aprioriest}

\noindent As long as $\varepsilon$ is positive, solutions of system \eqref{system_1}--\eqref{system_2} are smooth, however some estimates, which are instrumental for studying the oscillatory limit, are uniform in  $\varepsilon$.

\begin{thm}\label{theorem:well_posed}
There exists the unique classical solution $\ueps, \veps: [0,\infty)\times \Omega \to \R$  of \eqref{system_1}--\eqref{system_2} which is nonnegative and has regularity
$$
\ueps \in C^{\alpha, 1 + \alpha/2}\left([0,\infty) \times \overline{\Omega}\right), \qquad \veps \in C^{2+\alpha, 1 + \alpha/2}\left([0,\infty) \times \overline{\Omega}\right).
$$
Moreover, we have
\begin{enumerate}
\item \label{PUB0}  $0 \leq \ueps \leq M $, $0 \leq \veps \leq M $ with $M = \max(\|F(u_0)\|_{\infty},\, \|u_0\|_{\infty}, \, \|v_0\|_{\infty},\,f_{+},\,\beta_{+})$,
\item \label{PUB1} $\{\nabla \veps \}_{\varepsilon \in (0,1)}$ is uniformly bounded in $L^2((0,\infty)\times \Omega)$,
\item \label{PUB3} $\left\{\frac{F(\ueps) - \veps}{\sqrt{\varepsilon}}\right\}_{\varepsilon \in (0,1)}$ and $\left\{\sqrt{\varepsilon}\, \Delta \veps \right\}_{\varepsilon \in (0,1)}$ are uniformly bounded in $L^2((0,\infty)\times \Omega)$.
\end{enumerate}
\end{thm}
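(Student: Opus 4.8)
\textbf{Proof plan for Theorem \ref{theorem:well_posed}.}

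The plan is to establish existence and regularity by the standard theory for parabolic systems, and then to extract the uniform-in-$\varepsilon$ bounds \eqref{PUB0}--\eqref{PUB3} from the structure of the equations and the energy equality \eqref{eq:PDE_for_kinetic_function_withTF}.

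First I would address local existence and regularity. For fixed $\varepsilon>0$, system \eqref{system_1}--\eqref{system_2} is a semilinear parabolic system (one equation is an ODE in $t$ pointwise in $x$, the other a heat equation with a bounded-on-bounded-sets nonlinearity) with $C^{2+\alpha}$ initial data satisfying the compatibility/Neumann conditions of Assumption \ref{ass:ID}. By a fixed-point argument in Hölder spaces (or the contraction-mapping setup of Ladyzhenskaya--Solonnikov--Ural'tseva), one gets a unique local classical solution in $C^{\alpha,1+\alpha/2}\times C^{2+\alpha,1+\alpha/2}$; Schauder estimates for $\veps$ bootstrap the regularity. Global existence then follows once the $L^\infty$ bound \eqref{PUB0} is known, since the nonlinearity is then effectively Lipschitz and no blow-up can occur in finite time.

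Next, the $L^\infty$ bound \eqref{PUB0}. Nonnegativity: if $\ueps$ touches $0$ from above then $\partial_t\ueps = \veps/\varepsilon \ge 0$ there (using $F(0)=0$), so $\ueps$ cannot cross zero; for $\veps$ the reaction term $\tfrac{F(\ueps)-\veps}{\varepsilon}$ is $\ge 0$ where $\veps=0$ (as $F\ge 0$), so the maximum principle for the heat equation with Neumann data keeps $\veps\ge 0$. For the upper bound, set $M$ as in the statement and compare: consider the region where $\veps \le M$; since $F$ is nondecreasing for $u>\beta_+$ with $F(\beta_+)=F(\alpha_+)\le f_+\le M$ and $F(u)\to\infty$, one checks that on the set $\{\ueps = \max(\|u_0\|_\infty,\beta_+)\}$ one has $F(\ueps)\ge \veps$ only in a controlled way; more precisely the pair $(\ueps,\veps)$ is trapped in the rectangle $[0,M]^2$ because the vector field on \eqref{system_1}--\eqref{system_2} points inward on its boundary. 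This is an invariant-region argument in the sense of Chueh--Conley--Smoller: one verifies that $[0,M]\times[0,M]$ is positively invariant for the reaction part and that the diffusion in the $\veps$ equation respects it (diffusion of a quantity bounded by $M$ preserves the bound), which gives \eqref{PUB0}; the precise value of $M$ is dictated by needing $F(\ueps)\le M$ whenever $\ueps\le M$, which is why $\|F(u_0)\|_\infty$, $f_+$ and $\beta_+$ all enter.

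Finally, the uniform estimates \eqref{PUB1} and \eqref{PUB3}. These come from \eqref{eq:PDE_for_kinetic_function_withTF} with the choice $\phi \equiv 1$ (equivalently, multiply \eqref{system_1} by $F(\ueps)$, \eqref{system_2} by $\veps$, add, and integrate over $\Omega$). With $\phi\equiv 1$ we have $\Psi(\lambda)=\int_0^\lambda F$, $\Phi(\lambda)=\lambda^2/2$, $\phi'\equiv 0$, and integrating the resulting identity over $(0,t)\times\Omega$ and using the Neumann condition to kill the $\Delta\Phi(\veps)$ term yields
\begin{equation*}
\int_\Omega \Big(\Psi(\ueps)+\tfrac12|\veps|^2\Big)\diff x + \int_0^t\!\!\int_\Omega |\nabla\veps|^2 \diff x\diff s + \frac1\varepsilon\int_0^t\!\!\int_\Omega \big(\veps - F(\ueps)\big)^2 \diff x\diff s = \int_\Omega \Big(\Psi(u_0)+\tfrac12|v_0|^2\Big)\diff x .
\end{equation*}
Since $0\le \ueps,\veps\le M$ and $F$ is bounded on $[0,M]$, the left-hand side's first integral is bounded below by $0$ and the right-hand side is a finite constant independent of $\varepsilon$; hence $\{\nabla\veps\}$ is bounded in $L^2((0,\infty)\times\Omega)$, giving \eqref{PUB1}, and $\{(F(\ueps)-\veps)/\sqrt\varepsilon\}$ is bounded in $L^2$, giving the first half of \eqref{PUB3}. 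For the second half, note from \eqref{system_2} that $\sqrt\varepsilon\,\Delta\veps = \sqrt\varepsilon\,\partial_t\veps - (F(\ueps)-\veps)/\sqrt\varepsilon$; testing \eqref{system_2} against $\partial_t\veps$ (or differentiating the energy identity above appropriately) produces a bound on $\sqrt\varepsilon\,\|\partial_t\veps\|_{L^2}$ — indeed multiplying \eqref{system_2} by $\varepsilon\partial_t\veps$ and integrating gives $\varepsilon\|\partial_t\veps\|_{L^2}^2 + \tfrac12\frac{d}{dt}\int|\nabla\veps|^2 = \int (F(\ueps)-\veps)\partial_t\veps$, and the right side is controlled by $\tfrac{\varepsilon}{2}\|\partial_t\veps\|_{L^2}^2 + \tfrac1{2\varepsilon}\|F(\ueps)-\veps\|_{L^2}^2$, which we have already bounded — so $\sqrt\varepsilon\,\partial_t\veps$ is bounded in $L^2$ and therefore so is $\sqrt\varepsilon\,\Delta\veps$, completing \eqref{PUB3}.

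I expect the main obstacle to be the $L^\infty$ bound \eqref{PUB0} with the sharp constant $M$: one must check carefully that the rectangle $[0,M]^2$ is invariant despite the fast reaction term of size $1/\varepsilon$, using the fact that at $\ueps=M$ one has $F(\ueps)\ge \veps$ only because $M\ge f_+$ forces $F(M)\ge \veps$ — and conversely at $\veps = M$ one has $F(\ueps)\le \veps$ — so the reaction always pushes the pair back into the square, uniformly in $\varepsilon$; the diffusion term then cannot destroy the bound by the maximum principle. Everything else is either classical parabolic theory or a direct consequence of the energy identity with $\phi\equiv 1$.
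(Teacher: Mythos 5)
Your treatment of \eqref{PUB1} and \eqref{PUB3} is essentially the paper's. The displayed energy identity is exactly \eqref{eq:PDE_for_kinetic_function_withTF} with $\phi(\lambda)=\lambda$; your labels ``$\phi\equiv 1$'' and ``$\phi'\equiv 0$'' are slips, since with $\phi\equiv 1$ both the gradient term and the reaction term drop out and the identity becomes empty --- what you actually use (multiplying \eqref{system_1} by $F(\ueps)$ and \eqref{system_2} by $\veps$) is $\phi(\lambda)=\lambda$, and then $\phi'\equiv 1$ is precisely what produces the $\int|\nabla\veps|^2$ term you keep. Your derivation of the $\sqrt{\varepsilon}\,\Delta\veps$ bound by testing \eqref{system_2} with $\varepsilon\,\partial_t\veps$ is equivalent to the paper's multiplier $\varepsilon\,\Delta\veps$ (you dropped a factor $\varepsilon$ in front of $\tfrac12\frac{d}{dt}\int|\nabla\veps|^2$, which is harmless because $\varepsilon\int_\Omega|\nabla v_0|^2\diff x$ is uniformly bounded). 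The existence/regularity part is the same appeal to standard parabolic theory as in the paper.

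The genuine gap is in your argument for \eqref{PUB0}. Invariance of the face $\{\ueps=M\}$ of the rectangle $[0,M]^2$ requires $\partial_t\ueps=(\veps-F(M))/\varepsilon\le 0$ for every admissible $\veps\in[0,M]$, i.e. $F(M)\ge M$. Your justification, ``$M\ge f_{+}$ forces $F(M)\ge\veps$'', is a non sequitur: $M\ge f_{+}$ gives no lower bound on $F(M)$, and if $F$ grows slowly beyond $\beta_{+}$ while $M$ is large (driven, say, by $\|v_0\|_{\infty}$), then $F(M)<M$ and the reaction field points \emph{outward} at $\ueps=M$ when $\veps$ is close to $M$; symmetrically, invariance of $\{\veps=M\}$ needs $F\le M$ on the whole admissible $u$-range, which $[0,M]$ does not guarantee either. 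The repair is to enlarge the rectangle in the $u$-direction to $[0,S_3(M)]\times[0,M]$ (with $S_3$ from Notation \ref{intro:not_inv_phi}): on $[0,S_3(M)]$ one has $F\le\max(f_{+},M)=M$, so the $v$-face is invariant (and diffusion respects it by the maximum principle), while at $u=S_3(M)$ one has $F(u)=M\ge\veps$, so the $u$-face is invariant; moreover $\|u_0\|_{\infty}\le S_3(M)$ because $F(\|u_0\|_{\infty})\le\|F(u_0)\|_{\infty}\le M$ and $S_3(M)\ge\beta_{+}$. This yields an $\varepsilon$-uniform $L^{\infty}$ bound (with constant $S_3(M)$ for $\ueps$ rather than $M$), which suffices for global existence and for everything downstream. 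Note that the paper proceeds differently here: instead of an invariant region it takes a nondecreasing $\phi$ vanishing on $[0,M]$ in \eqref{eq:PDE_for_kinetic_function_withTF}, so that $t\mapsto\int_\Omega[\Psi(\ueps)+\Phi(\veps)]\diff x$ is nonincreasing and zero initially, giving $\veps\le M$ and $F\le M$ on $[0,\ueps]$ --- which again controls $\ueps$ only by $S_3(M)$; the literal bound $\ueps\le M$ would additionally require $F(M)\ge M$. So your route is viable and in substance parallel to the paper's, but the inward-pointing claim at $u=M$ must be corrected as above.
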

\begin{proof}
Because the right hand sides are locally Lipschitz continuous, local existence, nonnegativity and uniqueness follows from standard theory.
To prove global existence, we need to establish uniform bounds as in~\eqref{PUB0}. We consider smooth and nondecreasing test function $\phi: \R \to \R$ as well as $\Psi$ and $\Phi$ defined by \eqref{eq:def_of_Psi_Phi}. Then, it follows from \eqref{eq:PDE_for_kinetic_function_withTF} that
\begin{equation*}
\partial_t \Psi(\ueps) + \partial_t \Phi(\veps) \leq \Delta \Phi(\veps)
\end{equation*}
thanks to the monotonicity of $\phi$. Therefore, the nonnegative map
\begin{equation}\label{eq:functions_Linf_bounds}
t \mapsto \int_{\Omega} \big[\Psi(\ueps(t,x)) +\Phi(\veps(t,x))\big] \diff x
\end{equation}
is nonincreasing. 
We choose $\phi$ such that $\phi = 0$ on $[0,M]$ and $\phi' > 0$ on $(M,\infty)$. Then, the map in \eqref{eq:functions_Linf_bounds} vanishes at $t = 0$ and so, it has to vanish for all $t \geq 0 $. This proves uniform bounds on $\{\ueps\}_{\varepsilon \in (0,1)}$ and $\{\veps\}_{\varepsilon \in (0,1)}$ in $L^{\infty}((0,\infty)\times\Omega)$ and concludes the proof of global existence. \\

\noindent Estimates~\eqref{PUB1} and \eqref{PUB3} follow directly from \eqref{eq:PDE_for_kinetic_function_withTF} with $\phi(\lambda) = \lambda$. Finally, the estimate on $\Delta \veps$ is deduced by multiplying the equation for $\veps$ by $\varepsilon\,\Delta \veps$.
\end{proof}

\begin{cor}\label{cor:converg_A_v}
Let $\ueps, \veps$ be the solution of system \eqref{system_1}--\eqref{system_2}. Then, $F(\ueps) - \veps \to 0$ strongly in $L^2((0,\infty)\times\Omega)$.
\end{cor}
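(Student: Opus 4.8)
The plan is to read this off directly from the uniform a priori estimate \eqref{PUB3} in Theorem~\ref{theorem:well_posed}, which was itself produced by the energy equality \eqref{eq:PDE_for_kinetic_function_withTF} with the linear choice $\phi(\lambda)=\lambda$. That estimate provides a constant $C>0$, independent of $\varepsilon\in(0,1)$, such that
\[
\int_0^{\infty}\!\!\int_{\Omega} \frac{\big(F(\ueps)-\veps\big)^2}{\varepsilon}\, \diff x \diff t \;\leq\; C .
\]

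Multiplying this inequality by $\varepsilon$ yields
\[
\big\|F(\ueps)-\veps\big\|_{L^2((0,\infty)\times\Omega)}^2 \;\leq\; C\,\varepsilon ,
\]
and the right-hand side tends to $0$ as $\varepsilon\to0$. This is exactly the asserted strong convergence. The only point that genuinely matters is that the constant $C$ in \eqref{PUB3} does not depend on $\varepsilon$: it is precisely this uniformity, combined with the factor $\varepsilon$ gained by the rescaling, that forces the limit to vanish. Recall that \eqref{PUB3} comes from integrating \eqref{eq:PDE_for_kinetic_function_withTF} with $\phi(\lambda)=\lambda$ over $(0,\infty)\times\Omega$, using the $L^\infty$ bounds \eqref{PUB0} to handle the terms at $t=0$ and $t\to\infty$, after which the last term on the right of \eqref{eq:PDE_for_kinetic_function_withTF} is exactly $\varepsilon^{-1}\big(\veps-F(\ueps)\big)^2$.

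There is essentially no obstacle here; the statement is a one-line rescaling of an estimate already in hand. It is worth noting only that \eqref{PUB3} holds on the whole time half-line, so the conclusion is genuinely convergence in $L^2((0,\infty)\times\Omega)$ and not merely on bounded time intervals, and that the $L^\infty$ bounds \eqref{PUB0} are not needed for the argument itself beyond ensuring that all the integrals involved are finite.
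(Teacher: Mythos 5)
Your proof is correct and is exactly the argument the paper intends: Corollary~\ref{cor:converg_A_v} is an immediate consequence of the uniform bound on $\left\{\frac{F(\ueps)-\veps}{\sqrt{\varepsilon}}\right\}_{\varepsilon\in(0,1)}$ in \eqref{PUB3} of Theorem~\ref{theorem:well_posed}, since multiplying that bound by $\varepsilon$ gives $\|F(\ueps)-\veps\|_{L^2((0,\infty)\times\Omega)}^2\leq C\varepsilon\to 0$. Nothing further is needed.
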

\noindent Recall that we write $\{\mu_{t,x}\}_{t,x}$ and $\{\nu_{t,x}\}_{t,x}$ for Young measures generated by sequences $\{\ueps\}_{\varepsilon \in (0,1)}$ and $\{\veps\}_{\varepsilon \in (0,1)}$ respectively. We make an elementary observation.
\begin{lem}\label{lem:another_char_of_nu}
Sequence $\{F(\ueps)\}_{\varepsilon \in (0,1)}$ generates Young measure $\left\{F^{\#}\mu_{t,x}\right\}_{t,x}$ (i.e. push-forward of $\mu_{t,x}$ along map $F$). Moreover, for a.e. $(t,x)\in(0,\infty)\times\Omega$ we have
$$
F^{\#}\mu_{t,x}  = \nu_{t,x}.
$$
\end{lem}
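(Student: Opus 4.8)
The plan is to establish the two assertions of Lemma~\ref{lem:another_char_of_nu} separately, the first being a general fact about Young measures and the second a consequence of Corollary~\ref{cor:converg_A_v}. For the push-forward statement, I would invoke the standard composition rule for Young measures: if $\{\ueps\}$ generates $\{\mu_{t,x}\}$ and $F$ is continuous (here $F \in C^1$) and the sequence $\{\ueps\}$ is bounded in $L^\infty$ (by Theorem~\ref{theorem:well_posed}\eqref{PUB0}, with values in $[0,M]$), then for any bounded continuous $G:\R\to\R$ we have $G(F(\ueps)) = (G\circ F)(\ueps) \wstar \langle G\circ F, \mu_{t,x}\rangle = \int_\R G(F(\lambda))\diff\mu_{t,x}(\lambda) = \int_\R G(\eta)\diff (F^{\#}\mu_{t,x})(\eta)$ for a.e. $(t,x)$, which is exactly the statement that $\{F(\ueps)\}$ generates $\{F^{\#}\mu_{t,x}\}$. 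Since $\{F(\ueps)\}$ is also bounded in $L^\infty$, this characterization via test functions $G\in C_b(\R)$ is legitimate.

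For the identity $F^{\#}\mu_{t,x} = \nu_{t,x}$, the key point is that $F(\ueps) - \veps \to 0$ strongly in $L^2((0,\infty)\times\Omega)$ by Corollary~\ref{cor:converg_A_v}. Two sequences that differ by something converging to $0$ in $L^2$ (hence, along a further subsequence, a.e.), and both of which are bounded in $L^\infty$, generate the same Young measure. Concretely, I would argue as follows: for $G\in C_b(\R)$, $G(F(\ueps)) - G(\veps) \to 0$ in every $L^p((0,\infty)\times\Omega)$ with $p<\infty$ (use uniform continuity of $G$ restricted to $[0,M]$ together with $F(\ueps)-\veps\to 0$ in measure and dominated convergence), so $G(F(\ueps))$ and $G(\veps)$ have the same weak$^*$ limit; by the first part this limit is $\langle G, F^{\#}\mu_{t,x}\rangle$ and by definition of $\{\nu_{t,x}\}$ it is $\langle G, \nu_{t,x}\rangle$. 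Since this holds for all $G\in C_b(\R)$ and for a.e. $(t,x)$ (taking a countable dense subfamily of $C_b(\R)$ to fix the null set uniformly), we conclude $F^{\#}\mu_{t,x} = \nu_{t,x}$ for a.e.\ $(t,x)$.

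I do not expect any serious obstacle here; the statement is deliberately labeled ``an elementary observation.'' The only mild technical care needed is the passage from ``$G(F(\ueps))$ and $G(\veps)$ have the same weak$^*$ limit for each fixed $G$'' to ``$F^{\#}\mu_{t,x}=\nu_{t,x}$ as measures for a.e.\ $(t,x)$'': this requires choosing a countable family of $G$'s (e.g.\ a sequence dense in $C_0([0,M+1])$, or the functions $e^{i k \eta}$) that separates measures on the compact set $[0,M]$, and noting that the exceptional null set can then be taken as a countable union. Given the $L^\infty$ bounds from Theorem~\ref{theorem:well_posed}, all Young measures involved are supported in $[0,M]$, so measure-separation on a compact set suffices and no tightness issues arise.
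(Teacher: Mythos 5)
Your proposal is correct and follows essentially the same route as the paper: the push-forward identity is obtained by testing the Young measure of $\{\ueps\}$ against the composed function $G\circ F$, and the identification $F^{\#}\mu_{t,x}=\nu_{t,x}$ comes from the strong convergence $F(\ueps)-\veps\to 0$ of Corollary~\ref{cor:converg_A_v}. The only difference is that the paper simply cites a known lemma (that sequences whose difference converges to zero strongly generate the same Young measure), whereas you prove that step by hand via convergence in measure and a countable separating family, which is a fine (if slightly more laborious) substitute.
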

\begin{proof}
Let $\Psi: \R \to \R$ be a bounded function and let $\left\{\rho_{t,x} \right\}_{t,x}$ be the Young measure generated by sequence $\{F(\ueps)\}_{\varepsilon \in (0,1)}$. Then, up to a subsequence, weak$^*$ limit of $\Psi(F(\ueps))$ can be written as
$$
\int_{\R} \Psi(\lambda) \diff \rho_{t,x}(\lambda) = \int_{\R} \Psi(F(\lambda)) \diff \mu_{t,x}(\lambda).
$$
Therefore, $\rho_{t,x}=F^{\#} \mu_{t,x}$ holds for a.e. $(t,x)$ as desired. Moreover, Corollary \ref{cor:converg_A_v} shows that $F(\ueps) - \veps \to 0$ strongly in $L^2((0,T)\times\Omega)$. Hence, Young measures generated by these sequences coincide \cite[Lemma 6.3]{MR1452107} and the proof is concluded.
\end{proof}

\section{Kinetic formulation}
\label{sect:main_kin_form}

\subsection{Kinetic functions and the kinetic PDE}\label{sect:intro_to_knetic_func}

To understand the behaviour of sequences $\{\ueps\}_{\varepsilon \in (0,1)}$ and $\{\veps\}_{\varepsilon \in (0,1)}$, we introduce kinetic function for $\alpha \geq 0$, 
\begin{equation}\label{eq_general_kinetic_function}
\chi_{\alpha}(\xi) = 
\mathds{1}_{0<\xi\leq \alpha}.
\end{equation}
 As Young measures, it is a way to represent nonlinear functions $\varphi: \R \to \R$ since we have a fundamental identity
\begin{equation}\label{sect3:fundament_id_kinetic_form}
\int_{\R} \chi_{\alpha}(\xi) \, \varphi'(\xi) \diff \xi = \int_0^{\alpha} \varphi'(\xi)\diff \xi=\varphi(\alpha)-\varphi(0).
\end{equation}
We let
\begin{equation}\label{eq:pq}
p^{\varepsilon}(t,x,\xi) = \chi_{\ueps(t,x)}(\xi)  \qquad \qquad q^{\varepsilon}(t,x,\xi) = \chi_{\veps(t,x)}(\xi).
\end{equation}
so for any differentiable and bounded $\Psi: \R \to \R$ we have by \eqref{sect3:fundament_id_kinetic_form}
\begin{equation}\label{eq:fund_id_kin_form_part_sequences}
\Psi(\ueps(t,x)) = \Psi(0) + \int_{\R} p^{\varepsilon}(t,x,\xi)\, \Psi'(\xi) \diff \xi, \qquad \Psi(\veps(t,x)) = \Psi(0) + \int_{\R} q^{\varepsilon}(t,x,\xi)\, \Psi'(\xi) \diff \xi. 
\end{equation}
After extraction of a weakly$^*$ converging subsequence in $L^{\infty}((0,T)\times\Omega\times\R)$, we may assume that $p^{\varepsilon} \weaks p$ and $q^{\varepsilon} \weaks q$, i.e.,
\begin{equation}\label{eq:def_of_pq_weak_lim}
p(t,x,\xi) = \wstlim_{\varepsilon\to 0}\, p^{\varepsilon}(t,x,\xi), \qquad \qquad q(t,x,\xi) = \wstlim_{\varepsilon\to 0}\, q^{\varepsilon}(t,x,\xi).
\end{equation}
Connection between $p$ and $q$ will be explored in Lemma ~\ref{lem:prop_k_and_l}. We usually say that kinetic functions $p$ and $q$ are generated by sequences $\{\ueps\}_{\varepsilon \in (0,1)}$ and $\{\veps\}_{\varepsilon \in (0,1)}$ respectively. Basic properties of $p$ and $q$ are recorded below.

\begin{lem}[Properties of $p$ and $q$]\label{lem:prop_k_and_l}
Let $p$ and $q$ be given by \eqref{eq:def_of_pq_weak_lim}. Then, for a.e. $(t,x)\in(0,T)\times\Omega$, 
\begin{enumerate}
\item\label{X1} 
we have $0 \leq p(t,x,\xi)\leq 1$ and $0\leq q(t,x,\xi)\leq 1$; 
\item\label{X2} the maps $p(t,x,\xi)$ and $q(t,x,\xi)$ are 
supported in $(0,M)$ with $M$ defined in Theorem \ref{theorem:well_posed}, \eqref{PUB0};
\item\label{X3} the maps $\xi \mapsto p(t,x,\xi)$ and $\xi \mapsto q(t,x,\xi)$ are non-increasing,
\item\label{eq:kandlequation} we have
$$q(t,x,\xi) = \int_{\R} p(t,x,\lambda)\,F'(\lambda)\, \delta_{\xi = F(\lambda)}(\xi) \diff \lambda$$ in the sense of distributions.
\end{enumerate}
\end{lem}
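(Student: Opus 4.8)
The plan is to treat the four assertions separately. The first three are soft: in each case $p^\varepsilon(t,x,\cdot)=\chi_{\ueps(t,x)}$ and $q^\varepsilon(t,x,\cdot)=\chi_{\veps(t,x)}$ satisfy a pointwise structural constraint which is stable under weak$^*$ limits. The fourth is the substantial one and rests on the strong convergence $F(\ueps)-\veps\to0$ of Corollary~\ref{cor:converg_A_v} together with the chain rule.

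For the boundedness, $p^\varepsilon$ and $q^\varepsilon$ take values in $\{0,1\}\subset[0,1]$, and a weak$^*$ limit of functions valued in the closed convex set $[0,1]$ is again valued in $[0,1]$. For the support, Theorem~\ref{theorem:well_posed}, item~\eqref{PUB0}, gives $0\le\ueps,\veps\le M$ uniformly in $\varepsilon$, so by \eqref{eq_general_kinetic_function} the maps $\chi_{\ueps(t,x)}$ and $\chi_{\veps(t,x)}$ vanish outside $(0,M)$ for every $\varepsilon$ and every $(t,x)$, whence the same holds for $p$ and $q$. For the monotonicity in $\xi$, I would fix nonnegative $\varphi\in C_c^\infty((0,\infty))$ and nonnegative $\eta\in C_c^\infty((0,T)\times\Omega)$; since $\xi\mapsto\chi_{\alpha}(\xi)$ is nonincreasing on $(0,\infty)$ for every $\alpha\ge0$, the fundamental identity \eqref{sect3:fundament_id_kinetic_form} gives $\int_\R p^\varepsilon(t,x,\xi)\,\varphi'(\xi)\diff\xi=\varphi(\ueps(t,x))\ge0$, and similarly for $q^\varepsilon$. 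Multiplying by $\eta$, integrating in $(t,x)$, and letting $\varepsilon\to0$ yields $\int_\R p(t,x,\xi)\varphi'(\xi)\diff\xi\ge0$ and the analogue for $q$, for a.e.\ $(t,x)$ and all such $\varphi$, i.e.\ $p(t,x,\cdot)$ and $q(t,x,\cdot)$ are nonincreasing on $(0,\infty)$.

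For the last identity, which is the heart of the lemma, I would test against an arbitrary $\psi\in C_c^\infty(\R)$ and set $\Psi_0(\alpha):=\int_0^\alpha\psi(\xi)\diff\xi$. Since $\veps\ge0$, identity \eqref{sect3:fundament_id_kinetic_form} gives $\int_\R q^\varepsilon(t,x,\xi)\,\psi(\xi)\diff\xi=\Psi_0(\veps(t,x))$, while the chain rule applied to $\alpha\mapsto\Psi_0(F(\alpha))$, using $F(0)=0$, gives
$$
\Psi_0\big(F(\ueps(t,x))\big)=\int_0^{\ueps(t,x)}\psi\big(F(\lambda)\big)F'(\lambda)\diff\lambda=\int_\R p^\varepsilon(t,x,\lambda)\,\psi\big(F(\lambda)\big)F'(\lambda)\diff\lambda .
$$
By Corollary~\ref{cor:converg_A_v}, $F(\ueps)-\veps\to0$ in $L^2$, and $\Psi_0$ is globally Lipschitz with constant $\|\psi\|_\infty$, so $\Psi_0(\veps)-\Psi_0(F(\ueps))\to0$ in $L^1$; hence the two displayed quantities share the same weak$^*$ limit. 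Passing to the limit against an arbitrary nonnegative test function in $(t,x)$ — using $q^\varepsilon\weaks q$ and $p^\varepsilon\weaks p$ tested against the fixed bounded, compactly supported function $\lambda\mapsto\psi(F(\lambda))F'(\lambda)$ — I obtain, for a.e.\ $(t,x)$,
$$
\int_\R q(t,x,\xi)\,\psi(\xi)\diff\xi=\int_\R p(t,x,\lambda)\,F'(\lambda)\,\psi\big(F(\lambda)\big)\diff\lambda=\int_\R\Big(\int_\R p(t,x,\lambda)\,F'(\lambda)\,\delta_{\xi=F(\lambda)}(\xi)\diff\lambda\Big)\psi(\xi)\diff\xi ,
$$
which, $\psi$ ranging over $C_c^\infty(\R)$, is precisely the claimed distributional identity.

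The argument involves no genuine difficulty; the only care needed is bookkeeping, and that is where the main (mild) obstacle lies. First, to pass from the test-function-integrated identities to statements holding for a.e.\ $(t,x)$ and \emph{all} $\psi$ (resp.\ $\varphi$) one runs the standard argument over a countable dense family followed by density — this is exactly the slicing of Young measures. Second, it should be noted explicitly that replacing $\veps$ by $F(\ueps)$ inside the kinetic function is legitimate because both stay in the fixed bounded window $(0,M)$ and $\alpha\mapsto\chi_\alpha$ is an isometry of $\big([0,\infty),|\cdot|\big)$ into $\big(L^1(\R),\|\cdot\|_{L^1}\big)$, so the $L^2$-closeness of $F(\ueps)$ and $\veps$ transfers to $L^1$-closeness of the kinetic functions. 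Finally, an equivalent route to the last identity goes through Lemma~\ref{lem:another_char_of_nu}: writing $q(t,x,\xi)=\mathds 1_{\xi>0}\,\nu_{t,x}([\xi,\infty))$ with $\nu_{t,x}=F^{\#}\mu_{t,x}$ and $p(t,x,\lambda)=\mathds 1_{\lambda>0}\,\mu_{t,x}([\lambda,\infty))$, the same formula follows by a change of variables and Fubini; I would nonetheless keep the kinetic-function computation as the primary argument since it sidesteps disintegration of measures.
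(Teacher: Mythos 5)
Your proposal is correct and follows essentially the same route as the paper: items (1)–(3) by stability of the pointwise structural properties of $\chi_{\ueps}$, $\chi_{\veps}$ under weak$^*$ limits, and item (4) by the change-of-variables/chain-rule identity turning $\Psi(F(\ueps))$ into $\int_{\R}p^{\varepsilon}\,\psi(F(\lambda))F'(\lambda)\diff\lambda$, combined with Corollary \ref{cor:converg_A_v} and the Lipschitz bound on the primitive of $\psi$ to identify the limit with $q$. The only difference is presentational bookkeeping (e.g.\ your explicit $L^1$-isometry remark), not mathematical substance.
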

\begin{proof}
Property \eqref{X1} follows from the fact that the sequences are nonnegative and this property is preserved under weak limits. Property \eqref{X2} follows from uniform boundedness of sequences $\{\ueps\}_{\varepsilon \in (0,1)}$ and $\{\veps\}_{\varepsilon \in (0,1)}$. Property (3) is a consequence of the same for  $p^{\varepsilon}$ and $q^{\varepsilon}$. To see \eqref{eq:kandlequation}, we fix a smooth test function $\psi(\xi)$ with $\Psi(\xi)=\int_0^{\xi} \psi(\eta) \diff \eta$ and we consider the map 
$$
\R \ni w \mapsto \int_{\R} \psi(\lambda) \, \chi_{F(w)}(\lambda) \diff \lambda = \Psi(F(w)).
$$
With the change of variable $\xi=F(\lambda)$, \eqref{sect3:fundament_id_kinetic_form} implies the identity
$$
\int_{\R} \psi(\xi) \, \chi_{F(w)}(\xi) \diff \xi =
\int_{\R} \psi(F(\lambda)) \, \chi_{w}(\lambda) \,  F'(\lambda) \diff \lambda = \int_{\R} \psi(\xi) \int_{\R} \chi_{w}(\lambda) \, F'(\lambda) \, \delta_{\xi=F(\lambda)}(\xi) \diff \lambda \diff \xi.
$$
We plug $w = \ueps(t,x)$ and deduce
\begin{equation}\label{eq:kandl_almost_pass}
\int_{\R} \psi(\xi) \, \chi_{F(\ueps)}(\xi) \diff \xi = \int_{\R} \psi(\xi) \int_{\R} p^{\varepsilon}(t,x,\xi) \, F'(\lambda) \, \delta_{\xi=F(\lambda)}(\xi) \diff \lambda \diff \xi.
\end{equation}
Now, to identify the weak$^*$ limit on the (LHS) of \eqref{eq:kandl_almost_pass}, we note that
$$
\left|\int_{\R} \psi(\xi) \, \chi_{F(\ueps)}(\xi) \diff \xi  - \int_{\R} \psi(\xi) \, \chi_{\veps}(\xi) \diff \xi  \right| = \left|\Psi(F(\ueps)) - \Psi(\veps) \right| \to 0 \mbox{ in } L^2((0,T)\times\Omega)
$$
due to Corollary \ref{cor:converg_A_v}. Therefore, sending $\varepsilon \to 0$ in \eqref{eq:kandl_almost_pass}, we obtain \eqref{eq:kandlequation}.
\end{proof}
\noindent In view of Lemma \ref{lem:another_char_of_nu}, let us also connect kinetic functions with Young measures.
\begin{lem}[Young measures vs kinetic formulation]\label{lem:YM_con_with_kf}
Let $p$ and $q$ be given by \eqref{eq:def_of_pq_weak_lim} and let $\{\mu_{t,x}\}_{t,x}$ and $\{\nu_{t,x}\}_{t,x}$ be the Young measures generated by $\{\ueps\}_{\varepsilon \in (0,1)}$ and $\{\veps\}_{\varepsilon \in (0,1)}$ respectively. Then, for a.e. $(t,x)\in(0,T)\times\Omega$ we have, in the sense of distributions, 
$$
{\partial_\xi}\, p(t,x,\xi) = \delta_0 - \mu_{t,x}, \qquad \qquad {\partial_\xi}\, q(t,x,\xi) = \delta_0 - \nu_{t,x} = \delta_0 - F^{\#}\mu_{t,x} .
$$

\end{lem}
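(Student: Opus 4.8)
The plan is to compute the distributional $\xi$-derivative of the elementary kinetic functions $p^\varepsilon,q^\varepsilon$ and then pass to the limit, using that differentiation is continuous for the distributional topology and that weak$^*$ convergence in $L^\infty$ implies convergence in $\mathcal{D}'$.

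First I would record the pointwise identity. For fixed $(t,x)$ and $\alpha = \ueps(t,x)\geq 0$, applying the fundamental identity \eqref{sect3:fundament_id_kinetic_form} with a test function $\varphi \in C^1_c(\R)$ gives
$$
\langle \partial_\xi \chi_\alpha,\varphi\rangle = -\int_\R \chi_\alpha(\xi)\,\varphi'(\xi)\diff\xi = \varphi(0) - \varphi(\alpha) = \langle \delta_0 - \delta_\alpha,\varphi\rangle,
$$
so that $\partial_\xi p^\varepsilon(t,x,\cdot) = \delta_0 - \delta_{\ueps(t,x)}$ and, likewise, $\partial_\xi q^\varepsilon(t,x,\cdot) = \delta_0 - \delta_{\veps(t,x)}$ in $\mathcal{D}'(\R)$ for a.e. $(t,x)$.

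Next I would test against a tensor test function $\theta(t,x)\,\varphi(\xi)$ with $\theta \in C^\infty_c((0,T)\times\Omega)$ and $\varphi \in C^\infty_c(\R)$. Since $p^\varepsilon \weaks p$ in $L^\infty$, we have $p^\varepsilon \to p$, hence $\partial_\xi p^\varepsilon \to \partial_\xi p$, in $\mathcal{D}'((0,T)\times\Omega\times\R)$; on the other hand the previous step gives
$$
\langle \partial_\xi p^\varepsilon, \theta\otimes\varphi\rangle = \int_0^T\!\!\int_\Omega \theta(t,x)\,\big(\varphi(0) - \varphi(\ueps(t,x))\big)\diff x \diff t.
$$
By the very definition of the Young measure $\{\mu_{t,x}\}$ generated by $\{\ueps\}$, we have $\varphi(\ueps) \weaks \langle \varphi,\mu_{t,x}\rangle$ in $L^\infty((0,T)\times\Omega)$, so the right-hand side converges to $\int_0^T\int_\Omega \theta(t,x)\big(\varphi(0) - \langle \varphi,\mu_{t,x}\rangle\big)\diff x \diff t$. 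Therefore $\partial_\xi p = \delta_0 - \mu_{t,x}$ in $\mathcal{D}'$, and the identical computation with $\veps$ in place of $\ueps$ yields $\partial_\xi q = \delta_0 - \nu_{t,x}$; the equality $\nu_{t,x} = F^{\#}\mu_{t,x}$ is precisely Lemma~\ref{lem:another_char_of_nu}.

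The only point requiring genuine care — which I regard as the ``hard'' part, although it is routine — is upgrading the distributional identity in all variables $(t,x,\xi)$ to the slice-wise statement for a.e. fixed $(t,x)$. This I would deduce from weak$^*$ measurability of the families $(t,x)\mapsto \mu_{t,x}$ and $(t,x)\mapsto \nu_{t,x}$, separability of $C_0(\R)$ (so that a fixed countable family of test functions $\varphi$ suffices to characterise the derivative), and a Fubini/disintegration argument, concluding that for a.e. $(t,x)$ the maps $\xi \mapsto p(t,x,\xi)$ and $\xi\mapsto q(t,x,\xi)$ have the stated derivatives in $\mathcal{D}'(\R)$.
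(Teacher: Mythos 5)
Your proposal is correct and follows essentially the same route as the paper: both rest on the fundamental identity \eqref{sect3:fundament_id_kinetic_form}/\eqref{eq:fund_id_kin_form_part_sequences}, pass to the weak$^*$ limit using the definition of the Young measure together with $p^\varepsilon \weaks p$, and read off $\partial_\xi p = \delta_0 - \mu_{t,x}$ (with the $q$-identity reduced to Lemma~\ref{lem:another_char_of_nu}). The only difference is presentational — you differentiate first and are more explicit about upgrading the identity tested in $(t,x,\xi)$ to an a.e.\ slice-wise statement via a countable family of test functions, a point the paper leaves implicit.
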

\begin{proof}
We only prove the first formula as the second follows from a similar reasoning combined with Lemma \ref{lem:another_char_of_nu}. Let $\Psi:\R \to \R$ be an arbitrary smooth and bounded function. Passing to the limit $\varepsilon \to 0$ in \eqref{eq:fund_id_kin_form_part_sequences} we obtain
$$
\int_{\R} \Psi(\lambda)\diff\mu_{t,x}(\lambda) = \Psi(0) + \int_{\R} p(t,x,\xi)\,\Psi'(\xi) \diff \xi
$$
which proves the first identity. 
\end{proof}

\noindent We conclude this subsection with a distributional PDE that will be exploited in the compactness result (Lemma \ref{lem:compactness_product}).
\begin{thm}[PDE satisfied by kinetic functions]\label{thm:2ndPDE_kinfunct}
Let $p$ and $q$ be given by \eqref{eq:def_of_pq_weak_lim}. Then, there is a uniformly bounded sequence of nonnegative measures $\{n^{\varepsilon}\}_{\varepsilon \in (0,1)}$ on $(0,T)\times\Omega\times\R$ such that
\begin{equation}\label{eq:PDE2_sat_by_kin_seq}
\partial_t \left[\int_{\R} p^{\varepsilon}(t,x,\lambda) \, \delta_{\xi = F(\lambda)}(\xi) \diff \lambda +  q^{\varepsilon}(t,x,\xi)\right] - \Delta_x q^{\varepsilon}(t,x,\xi) =  \partial_{\xi}\, {n}^{\varepsilon}(t,x,\xi)
\end{equation}
in the sense of distributions. In particular, there is a bounded nonnegative measure $n$ on $(0,T)\times\Omega\times\R$ such that 
\begin{equation}\label{eq:PDE2_sat_by_kin_LIMIT}
\partial_t \left[\int_{\R} p(t,x,\lambda) \, \delta_{\xi = F(\lambda)}(\xi) \diff \lambda +  q(t,x,\xi)\right] - \Delta_x q(t,x,\xi) =  \partial_{\xi}\, {n}(t,x,\xi).
\end{equation}
\end{thm}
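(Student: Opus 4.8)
The plan is to obtain \eqref{eq:PDE2_sat_by_kin_seq} first at the level of the $\varepsilon$-problem by exploiting the energy equality \eqref{eq:PDE_for_kinetic_function_withTF}, and then pass to the limit. Fix a smooth, nondecreasing test function $\phi:\R\to\R$ and recall $\Psi,\Phi$ defined in \eqref{eq:def_of_Psi_Phi}. The key observation is that the kinetic identities \eqref{eq:fund_id_kin_form_part_sequences}, after a change of variables $\xi=F(\lambda)$ as already performed in the proof of Lemma~\ref{lem:prop_k_and_l}\eqref{eq:kandlequation}, let us rewrite $\Psi(\ueps(t,x)) - \Psi(0) = \int_\R p^\varepsilon(t,x,\lambda)\,\phi(F(\lambda))\,F'(\lambda)\diff\lambda = \int_\R \phi(\xi)\left[\int_\R p^\varepsilon(t,x,\lambda)\,F'(\lambda)\,\delta_{\xi=F(\lambda)}(\xi)\diff\lambda\right]\diff\xi$, and similarly $\Phi(\veps(t,x))-\Phi(0) = \int_\R q^\varepsilon(t,x,\xi)\,\phi(\xi)\diff\xi$. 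Hence the left-hand side of \eqref{eq:PDE_for_kinetic_function_withTF}, namely $\partial_t[\Psi(\ueps)+\Phi(\veps)]$, is exactly $\partial_t$ applied to $\int_\R \phi(\xi)\big[\int_\R p^\varepsilon F'\,\delta_{\xi=F(\lambda)}\diff\lambda + q^\varepsilon\big]\diff\xi$ — but I must be careful: the natural object appearing here carries the weight $F'(\lambda)$, whereas \eqref{eq:PDE2_sat_by_kin_seq} has no such weight. The resolution is that $F'(\lambda)\,\delta_{\xi=F(\lambda)}\diff\lambda$ is, up to sign on each monotonicity branch, the pushforward of Lebesgue measure; integrating $\phi(F(\lambda))$ against $\diff\lambda$ directly (without the $F'$ factor) is what \eqref{eq:PDE2_sat_by_kin_seq} encodes, and since $\phi$ ranges over all smooth functions both formulations are equivalent once tested against $\phi(\xi)$ — the first term of \eqref{eq:PDE2_sat_by_kin_seq}, paired with a test function $\theta(t,x)\phi(\xi)$, equals $\int \theta(t,x)\,\Psi(\ueps(t,x))$ by \eqref{sect3:fundament_id_kinetic_form}. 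So I will phrase everything by pairing \eqref{eq:PDE2_sat_by_kin_seq} against test functions of the product form $\theta(t,x)\,\phi(\xi)$ and reduce it to \eqref{eq:PDE_for_kinetic_function_withTF}.

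Next I identify the right-hand side. Integrating \eqref{eq:PDE_for_kinetic_function_withTF} against $\phi(\xi)$ means: the $\Delta\Phi(\veps)$ term is $\Delta_x\int_\R q^\varepsilon(t,x,\xi)\phi(\xi)\diff\xi$, matching the $-\Delta_x q^\varepsilon$ term in \eqref{eq:PDE2_sat_by_kin_seq} after moving it to the left; and the remaining two terms,
\[
- \phi'(\veps)\,|\nabla\veps|^2 - \frac{(\veps-F(\ueps))(\phi(\veps)-\phi(F(\ueps)))}{\varepsilon},
\]
must be shown to equal $\int_\R \phi'(\xi)\diff\widetilde n^\varepsilon_{t,x}(\xi)$ for a nonnegative measure, i.e. $\partial_\xi n^\varepsilon$ tested against $\phi$. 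The term $\phi'(\veps)|\nabla\veps|^2 = \int_\R \phi'(\xi)\,|\nabla\veps|^2\,\delta_{\xi=\veps}\diff\xi$ is manifestly of this form with nonnegative density $|\nabla\veps|^2\delta_{\veps}$. For the reaction term, note $(\veps-F(\ueps))(\phi(\veps)-\phi(F(\ueps))) = (\veps-F(\ueps))\int_{F(\ueps)}^{\veps}\phi'(\xi)\diff\xi$, and since $(\veps-F(\ueps))$ and $\int_{F(\ueps)}^{\veps}(\cdot)$ have matching signs, this equals $\int_\R \phi'(\xi)\,|\veps-F(\ueps)|\,\mathds{1}_{\xi\text{ between }F(\ueps)\text{ and }\veps}\diff\xi$ — again nonnegative. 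So I set
\[
n^\varepsilon(t,x,\xi) := |\nabla\veps|^2\,\delta_{\xi=\veps(t,x)}(\xi) + \frac{|\veps(t,x)-F(\ueps(t,x))|}{\varepsilon}\,\mathds{1}_{\xi\text{ between }F(\ueps)\text{ and }\veps}(\xi),
\]
a nonnegative measure on $(0,T)\times\Omega\times\R$. This establishes \eqref{eq:PDE2_sat_by_kin_seq}.

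For uniform boundedness of $\{n^\varepsilon\}$: its total mass is $\int_{\Omega_T}|\nabla\veps|^2\diff x\diff t + \int_{\Omega_T}\frac{|\veps-F(\ueps)|}{\varepsilon}|\veps-F(\ueps)|\diff x\diff t = \|\nabla\veps\|_{L^2}^2 + \frac{1}{\varepsilon}\|\veps-F(\ueps)\|_{L^2}^2$... but this last quantity is $\|\tfrac{F(\ueps)-\veps}{\sqrt\varepsilon}\|_{L^2}^2$, which is uniformly bounded by Theorem~\ref{theorem:well_posed}\eqref{PUB3}; the gradient term is bounded by \eqref{PUB1}. Hence $\sup_\varepsilon \|n^\varepsilon\|_{\mathcal M} < \infty$, and up to a subsequence $n^\varepsilon \weaks n$ for some bounded nonnegative measure $n$ on $(0,T)\times\Omega\times\R$. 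Finally, passing to the limit in \eqref{eq:PDE2_sat_by_kin_seq}: $q^\varepsilon\weaks q$ in $L^\infty$ handles the $\Delta_x q^\varepsilon$ term and the $q^\varepsilon$ part of the time derivative; for $\int_\R p^\varepsilon(t,x,\lambda)\delta_{\xi=F(\lambda)}\diff\lambda$, pairing against $\theta(t,x)\phi(\xi)$ gives $\int\theta(t,x)\Psi(\ueps(t,x))$ with $\Psi'=\phi\circ F$ bounded, and $p^\varepsilon\weaks p$ gives convergence to $\int\theta(t,x)\int_\R p(t,x,\lambda)\phi(F(\lambda))\diff\lambda$, i.e. exactly the corresponding term with $p$; and $\partial_\xi n^\varepsilon \to \partial_\xi n$ in distributions. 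This yields \eqref{eq:PDE2_sat_by_kin_LIMIT}.

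The main obstacle I anticipate is purely bookkeeping rather than conceptual: keeping straight the two equivalent ways of writing the first term (with versus without the $F'(\lambda)$ Jacobian factor) and verifying that the distributional identity \eqref{eq:PDE2_sat_by_kin_seq} as written — with $\delta_{\xi=F(\lambda)}$ but no $F'$ — is genuinely what the energy equality \eqref{eq:PDE_for_kinetic_function_withTF} produces when one reads off the coefficient of $\phi(\xi)$. The cleanest route is never to manipulate \eqref{eq:PDE2_sat_by_kin_seq} directly but to test it against arbitrary $\theta(t,x)\phi(\xi)$, collapse each term using \eqref{sect3:fundament_id_kinetic_form}, and recognize the result as $\int\theta\,\partial_t[\Psi(\ueps)+\Phi(\veps)]$ plus lower-order terms — at which point \eqref{eq:PDE_for_kinetic_function_withTF} finishes the argument. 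A secondary point requiring a line of care is that the indicator $\mathds{1}_{\xi\text{ between }F(\ueps)\text{ and }\veps}$ should be written symmetrically so that it is well-defined regardless of the sign of $\veps-F(\ueps)$, and that the resulting object is indeed a (locally finite, hence after the mass bound globally finite) nonnegative measure in $\xi$ for a.e.\ $(t,x)$ and measurable in $(t,x)$.
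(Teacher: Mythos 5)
Your proposal is correct and follows essentially the same route as the paper: test the energy equality \eqref{eq:PDE_for_kinetic_function_withTF} against $\phi$, read off the kinetic terms via \eqref{sect3:fundament_id_kinetic_form}, and absorb $-\phi'(\veps)|\nabla\veps|^2$ and the reaction term into $\partial_\xi$ of a nonnegative measure bounded by \eqref{PUB1} and \eqref{PUB3}, then pass to the weak$^*$ limit. Your explicit density $\frac{|\veps-F(\ueps)|}{\varepsilon}\,\mathds{1}_{\xi \text{ between } F(\ueps) \text{ and } \veps}$ is exactly the paper's measure $n_2^{\varepsilon}=\frac{(\veps-F(\ueps))^2}{\varepsilon}\int_0^1\delta_{s\veps+(1-s)F(\ueps)}(\xi)\diff s$ written out after the change of variables $s\mapsto\xi$, so the two proofs coincide (the restriction to nondecreasing $\phi$ in your first line is unnecessary and should be dropped, since arbitrary smooth $\phi$ are needed for the distributional identity and nothing in the argument uses monotonicity).
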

\begin{proof}
We consider smooth test function $\phi: \R \to \R$ as well as $\Psi$ and $\Phi$ defined by \eqref{eq:def_of_Psi_Phi}. From \eqref{eq:PDE_for_kinetic_function_withTF} we know that
$$
\partial_t \Psi(\ueps) + \partial_t \Phi(\veps) = \Delta \Phi(\veps) - \phi'(\veps)\,|\nabla \veps|^2 - \frac{\big(\veps - F(\ueps)\big)\, (\phi(\veps) - \phi(F(\ueps))\big)}{\varepsilon}.
$$
Now, using kinetic functions we can write $\partial_t \Psi(\ueps)$ as
$$
\partial_t \Psi(\ueps) = \partial_t \int_{\R} p^{\varepsilon}(t,x,\lambda) \, \phi(F(\lambda)) \diff \lambda = \partial_t  \int_{\R} \phi(\xi) \, \int_{\R}  p^{\varepsilon}(t,x,\lambda) \,  \delta_{\xi = F(\lambda)}(\xi) \diff \xi \diff \lambda.
$$
while $\partial_t\Phi(\veps)$ and $\Delta \Phi(\veps)$ as
$$
\partial_t \Phi(\veps) = \partial_t \int_{\R} \phi(\xi) \, q^{\varepsilon}(t,x,\xi) \diff \xi, \qquad \Delta \Phi(\veps) = \Delta \int_{\R} \phi(\xi) \, q^{\varepsilon}(t,x,\xi) \diff \xi.
$$
Now, term $-\, \phi'(\veps) \, |\nabla \veps|^2$ can be interpreted as a derivative of a nonnegative measure $n_1^{\varepsilon}(t,x,\xi)$:
$$
-\, \phi'(\veps) \, |\nabla \veps|^2 = \left\langle \phi(\xi), {\partial_\xi} \, \delta_{\xi = \veps(t,x)}(\xi)\, |\nabla \veps(t,x)|^2 \right\rangle := \Big\langle \phi(\xi),\, {\partial_{\xi}} \, n_1^{\varepsilon}(t,x,\xi) \Big\rangle,
$$
We note that the sequence $\left\{n_1^{\varepsilon}\right\}_{\varepsilon \in (0,1)}$ is uniformly bounded in the space of measures according to estimate \eqref{PUB1} in Theorem \ref{theorem:well_posed}. Similarly, using
\begin{multline*}
\phi(\veps) - \phi(F(\ueps)) = \int_0^1 \frac{\diff}{\diff s} \phi(s\, \veps + (1-s) \, F(\ueps)) \diff s = \\ =(\veps - F(\ueps)) \, \int_0^1 \phi'(s\, \veps + (1-s) \, F(\ueps)) \diff s,
\end{multline*}
we can write
$$
-\, \frac{\big(\veps - F(\ueps)\big)\, \big(\phi(\veps) - \phi(F(\ueps))\big)}{\varepsilon} = 
\frac{\big(\veps - F(\ueps)\big)^2}{\varepsilon} \, \left\langle \phi(\xi), \partial_{\xi} \int_0^1 \delta_{s\, \veps + (1-s) \, F(\ueps)}(\xi) \diff s \right\rangle
$$
where the last term can be interpreted as a Bochner integral in the space of measures. We let
\begin{equation}\label{eq:term_n2_measure}
n_2^{\varepsilon}(t,x,\xi) = \frac{\big(\veps - F(\ueps)\big)^2}{\varepsilon} \,\int_0^1 \delta_{s\, \veps + (1-s) \, F(\ueps)}(\xi) \diff s
\end{equation}
which is a uniformly bounded sequence of nonnegative measures on $(0,T)\times\Omega\times\R$ due to \eqref{PUB3} in Theorem \ref{theorem:well_posed}. Therefore, it has a subsequence converging weakly$^*$, i.e. $n_2^{\varepsilon} \weaks n_2$. Collecting all the terms, we obtain distributional identity \eqref{eq:PDE2_sat_by_kin_seq}. Passing to the weak$^*$ limit with $\varepsilon \to 0$, we deduce~\eqref{eq:PDE2_sat_by_kin_LIMIT} and Theorem~\ref{thm:2ndPDE_kinfunct} is proved.
\end{proof}

\subsection{A kinetic identity satisfied by $p$ and $q$} 

Now, following \cite{MR1299852} for Young measures, we formulate a distributional identity that will be used to identify kinetic functions $p$ and $q$. We start with the result of compensated compactness type.
\begin{lem}[compensated compactness]\label{lem:compactness_product}
Let $p$ and $q$ be given by \eqref{eq:def_of_pq_weak_lim}. Then,
$$
q^{\varepsilon}(t,x,\xi)\,\left[\int_{\R} 
p^{\varepsilon}(t,x,\lambda) \, \delta_{\eta = F(\lambda)}(\eta) \diff \lambda + q^{\varepsilon}(t,x,\eta)\right] \to
q(t,x,\xi)\,\left[\int_{\R} 
p(t,x,\lambda) \, \delta_{\eta = F(\lambda)}(\eta) \diff \lambda + q(t,x,\eta)\right]
$$
in the sense of distributions. More precisely, for all smooth and compactly supported test functions $\phi,\,\psi:\R \to \R$ and $\varphi:(0,T)\times\Omega \to \R$ we have
\begin{multline*}
\int_{(0,T)\times \Omega} \varphi(t,x) \, \int_{\R} \phi(\xi)\, q^{\varepsilon}(t,x,\xi) \diff \xi \, \int_{\R}  \psi(\eta)\,\left[ \int_{\R} p^{\varepsilon}(t,x,\tau)\, \delta_{\eta = F(\tau)}(\eta)  \diff \tau + q^{\varepsilon}(t,x,\eta)\right]\diff \eta \diff x \diff t \to \\
\to \int_{(0,T)\times \Omega} \varphi(t,x) \, \int_{\R} \phi(\xi)\, q(t,x,\xi) \diff \xi \, \int_{\R}  \psi(\eta)\,\left[\int_{\R} p(t,x,\tau)\, \delta_{\eta = F(\tau)}(\eta)  \diff \tau + q(t,x,\eta)\right]\diff \eta \diff x \diff t.
\end{multline*}
\end{lem}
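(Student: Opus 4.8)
The plan is to read the asserted convergence as a compensated‑compactness (div–curl type) statement in the space–time variables $(t,x)$, in which one of the two factors is compact in time while the other is compact in space. It clearly suffices to prove the displayed ``more precisely'' identity. For the fixed test functions $\phi,\psi$ put, using \eqref{sect3:fundament_id_kinetic_form} and the change of variables $\eta=F(\tau)$ exactly as in the proof of Lemma~\ref{lem:prop_k_and_l},
\[
Q^\varepsilon(t,x):=\int_\R\phi(\xi)\,q^\varepsilon(t,x,\xi)\diff\xi=\Phi(\veps(t,x)),\qquad
W^\varepsilon(t,x):=\int_\R\psi(\eta)\Big[\int_\R p^\varepsilon(t,x,\tau)\,\delta_{\eta=F(\tau)}(\eta)\diff\tau+q^\varepsilon(t,x,\eta)\Big]\diff\eta=\Psi(\veps(t,x))\ {+}\ \widetilde\Psi(\ueps(t,x)),
\]
with $\Phi(\lambda)=\int_0^\lambda\phi$, $\Psi(\lambda)=\int_0^\lambda\psi$, $\widetilde\Psi(\lambda)=\int_0^\lambda\psi(F(\tau))\diff\tau$. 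By Theorem~\ref{theorem:well_posed}\,\eqref{PUB0} and compact support of $\phi,\psi$, both $Q^\varepsilon$ and $W^\varepsilon$ are bounded in $L^\infty((0,T)\times\Omega)$, and testing the weak‑$*$ convergences $p^\varepsilon\weaks p$, $q^\varepsilon\weaks q$ against $L^1$ functions of the form $\varphi(t,x)\,\psi(F(\tau))\mathds{1}_{(0,M)}(\tau)$ shows that $Q^\varepsilon\weaks Q$ and $W^\varepsilon\weaks W$ with $Q,W$ the integrals of $\phi q$ and $\psi[\int p\,\delta_{\eta=F(\tau)}\diff\tau+q]$ respectively. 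Thus the lemma reduces to $\int_{(0,T)\times\Omega}\varphi\,Q^\varepsilon W^\varepsilon\to\int_{(0,T)\times\Omega}\varphi\,QW$ for every $\varphi\in C_c^\infty((0,T)\times\Omega)$.

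Next I would establish the two a priori mechanisms. First, $Q^\varepsilon$ is \emph{compact in space}: $\nabla_xQ^\varepsilon=\phi(\veps)\nabla\veps$ is bounded in $L^2((0,T)\times\Omega)$ by Theorem~\ref{theorem:well_posed}\,\eqref{PUB1}, hence $\{Q^\varepsilon\}$ is bounded in $L^2((0,T);H^1(\Omega))$, and along a subsequence $\varphi Q^\varepsilon\weak\varphi Q$ weakly in $L^2((0,T);H^1_0(\Omega))$ (multiplication by the compactly supported smooth $\varphi$ maps $H^1(\Omega)$ continuously into $H^1_0(\Omega)$, so this is weakly continuous; the limit is $\varphi Q$ since it must coincide with the distributional limit). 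Second, $W^\varepsilon$ is \emph{compact in time}: reading the energy identity \eqref{eq:PDE_for_kinetic_function_withTF} with test function $\psi$ (equivalently, pairing \eqref{eq:PDE2_sat_by_kin_seq} with $\psi$) gives
\[
\partial_tW^\varepsilon=\DIV_x\big(\psi(\veps)\nabla\veps\big)\ -\ \psi'(\veps)\,|\nabla\veps|^2\ -\ \frac{\big(\veps-F(\ueps)\big)\big(\psi(\veps)-\psi(F(\ueps))\big)}{\varepsilon}.
\]
The first term is bounded in $L^2((0,T);H^{-1}(\Omega))$; the last two are bounded in $L^1((0,T)\times\Omega)$ by Theorem~\ref{theorem:well_posed}\,\eqref{PUB1},\eqref{PUB3} (using $|\psi(\veps)-\psi(F(\ueps))|\le\|\psi'\|_\infty|\veps-F(\ueps)|$), hence bounded in $L^1((0,T);H^{-s}(\Omega))$ once $s>d/2$. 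Choosing $s$ large, $\{\partial_tW^\varepsilon\}$ is bounded in $L^1((0,T);H^{-s}(\Omega))$ while $\{W^\varepsilon\}$ is bounded in $L^\infty((0,T);L^2(\Omega))$; by the Aubin–Lions–Simon lemma with the compact embedding $L^2(\Omega)\hookrightarrow\hookrightarrow H^{-1}(\Omega)\hookrightarrow H^{-s}(\Omega)$, $\{W^\varepsilon\}$ is relatively compact in $C([0,T];H^{-1}(\Omega))$, so along a further subsequence $W^\varepsilon\to W$ strongly in $L^2((0,T);H^{-1}(\Omega))$.

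It then remains to combine the two by a strong–weak pairing. Since $\varphi(t,\cdot)Q^\varepsilon(t,\cdot)\in H^1_0(\Omega)$,
\[
\int_{(0,T)\times\Omega}\varphi\,W^\varepsilon Q^\varepsilon=\int_0^T\langle W^\varepsilon,\varphi Q^\varepsilon\rangle_{H^{-1},H^1_0}\diff t=\int_0^T\langle W^\varepsilon-W,\varphi Q^\varepsilon\rangle\diff t+\int_0^T\langle W,\varphi Q^\varepsilon\rangle\diff t,
\]
where the first integral is bounded by $\|W^\varepsilon-W\|_{L^2_tH^{-1}}\,\|\varphi Q^\varepsilon\|_{L^2_tH^1_0}\to0$ and the second converges to $\int_0^T\langle W,\varphi Q\rangle\diff t=\int_{(0,T)\times\Omega}\varphi\,WQ$ because $W\in L^2((0,T);H^{-1}(\Omega))$ is fixed and $\varphi Q^\varepsilon\weak\varphi Q$ weakly in $L^2((0,T);H^1_0(\Omega))$. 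As the limit does not depend on the extracted subsequence, the full family converges, which is the claim. The step I expect to be the main obstacle is the \emph{time‑compactness of $W^\varepsilon$}: the right‑hand side of the identity for $\partial_tW^\varepsilon$ is controlled only in $L^1$ through the two dissipation terms, plus a term of second order in $x$, so one must set up the functional framework carefully so that Aubin–Lions–Simon applies with an $L^1$‑in‑time derivative, and one must respect the Neumann condition — which is precisely why the localisation by $\varphi$, placing $\varphi Q^\varepsilon$ in $H^1_0$, is used rather than working with $Q^\varepsilon$ directly. Beyond that, the only care needed is the routine bookkeeping of extracting a single subsequence along which $p^\varepsilon,q^\varepsilon$, $Q^\varepsilon$ in $L^2_tH^1$ and $W^\varepsilon$ in $C_tH^{-1}$ all converge.
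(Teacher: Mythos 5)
Your proof is correct in substance, but it implements the compensated--compactness step differently from the paper, so a comparison is worth recording. The paper works with the same two quantities (your $W^\varepsilon$ is its $\mathcal{P}^\varepsilon_\psi$, your $Q^\varepsilon$ its $\mathcal{Q}^\varepsilon_\phi$) and the same space-regularity input $\nabla\veps\in L^2$, but instead of placing $W^\varepsilon$ in a negative Sobolev space it lifts it by the inverse Neumann operator $T$ of \eqref{eq:auxillary_problem_laplacian}, rewrites $\int\varphi\,\mathcal{P}^\varepsilon_\psi\mathcal{Q}^\varepsilon_\phi$ through $\nabla T(\mathcal{P}^\varepsilon_\psi)\cdot\nabla(\varphi\mathcal{Q}^\varepsilon_\phi)$, and proves strong $L^2(0,T;H^1)$ compactness of $T(\mathcal{P}^\varepsilon_\psi)$ from the kinetic PDE \eqref{eq:PDE2_sat_by_kin_seq}, whose right-hand side is a bounded measure, via an Aubin--Lions lemma with measure-valued time derivative in spaces $\left[C(0,T;H^k(\Omega))\right]^*$. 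You avoid the auxiliary elliptic operator altogether: you read $\partial_t W^\varepsilon$ directly off the energy identity \eqref{eq:PDE_for_kinetic_function_withTF} with test function $\psi$, bound it in $L^2(0,T;H^{-1})+L^1((0,T)\times\Omega)$ using \eqref{PUB1} and \eqref{PUB3}, and pair the resulting strong $H^{-1}$-compactness of $W^\varepsilon$ with the weak $L^2(0,T;H^1_0)$ convergence of $\varphi Q^\varepsilon$. This is the ``dual'' realization of the same div--curl mechanism; it is more elementary (no operator $T$, no $C(0,T;H^k)^*$ bookkeeping), at the price of handling an $L^1$-in-time derivative in the compactness lemma, which is exactly the delicate point you flag.

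Two small repairs. First, with $\partial_t W^\varepsilon$ bounded only in $L^1(0,T;Y)$ you cannot invoke relative compactness in $C([0,T];H^{-1}(\Omega))$ (Simon's criterion in $C([0,T];B)$ requires the derivative in $L^r$ with $r>1$); but Simon's theorem with $W^\varepsilon$ bounded in $L^\infty(0,T;L^2(\Omega))$ and $\partial_t W^\varepsilon$ bounded in $L^1(0,T;Y)$ does give relative compactness in $L^p(0,T;H^{-1}(\Omega))$ for every $p<\infty$, and $p=2$ is all you use, so the argument survives with the correct citation. Second, specify $Y:=\bigl(H^s_0(\Omega)\bigr)^*$ with $s>\max(1,d/2)$: then the divergence term (a functional on $H^1_0\supset H^s_0$) and the two $L^1(\Omega)$ dissipation terms (functionals on $C(\overline\Omega)\supset H^s_0$) both land in $Y$, and the chain $L^2(\Omega)\hookrightarrow\hookrightarrow H^{-1}(\Omega)\hookrightarrow Y$ holds with injective embeddings, as Simon's lemma requires. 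With these adjustments, and the routine single-subsequence/uniqueness-of-limit argument you already mention, the proof is complete.
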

\begin{proof}
Let
$$
\mathcal{P}^{\varepsilon}_{\psi}(t,x) = \int_{\R}  \psi(\eta)\,\left[\int_{\R} p^{\varepsilon}(t,x,\xi)(t,x,\tau)\, \delta_{\eta = F(\tau)}(\eta)  \diff \tau + q(t,x,\eta)\right]\diff \eta, $$
$$\mathcal{Q}^{\varepsilon}_{\phi}(t,x) =
\int_{\R} \phi(\xi)\, q^{\varepsilon}(t,x,\xi) \diff \xi.
$$
Setting $\mathcal{P}^{\varepsilon}_{\psi} \weaks \mathcal{P}_{\psi}$ and $\mathcal{Q}^{\varepsilon}_{\phi} \weaks \mathcal{Q}_{\phi}$, we have to prove that
$$
\int_{(0,T)\times\Omega} \varphi(t,x) \, \mathcal{P}^{\varepsilon}_{\psi}(t,x) \, \mathcal{Q}^{\varepsilon}_{\phi}(t,x) \diff t \diff x \to \int_{(0,T)\times\Omega} \varphi(t,x) \, \mathcal{P}_{\psi}(t,x) \, \mathcal{Q}_{\phi}(t,x) \diff t \diff x.
$$
Consider operator $T: L^2(\Omega) \to H^2(\Omega)$ defined as the solution of the Neumann problem
\begin{equation}\label{eq:auxillary_problem_laplacian}
- \Delta \left[T(f)\right] = \mu \, \left[T(f)\right] + f \mbox{ in } \Omega, \qquad \qquad \frac{\partial}{\partial {\bf \mbox{n}}} \left[T(f)\right] = 0 \mbox{ on } \partial \Omega,
\end{equation}
where $\mu > 0$ is some fixed number from the resolvent of $-\Delta$ with Neumann boundary conditions.
By elliptic regularity theory, $\|T(f)\|_{H^2(\Omega)} \leq C\, \|f \|_{L^2(\Omega)}$ for a.e. $t \in (0,T)$. In particular, up to a subsequence, uniqueness of solutions to \eqref{eq:auxillary_problem_laplacian} implies  $T\left(\mathcal{P}^{\varepsilon}_{\psi}\right) \weak T\left(\mathcal{P}^{\phantom{\varepsilon}}_{\psi}\right)$ in $L^2(0,T;H^1(\Omega))$.
Using the operator $T$ we can write
$$
\int_{(0,T)\times\Omega} \varphi \, \mathcal{P}^{\varepsilon}_{\psi} \, \mathcal{Q}^{\varepsilon}_{\phi} \diff t \diff x = \int_{(0,T)\times\Omega} \nabla T\left(\mathcal{P}^{\varepsilon}_{\psi}\right) \cdot \nabla\left( \varphi\, \mathcal{Q}^{\varepsilon}_{\phi} \right) \diff t \diff x - \mu \, \int_{(0,T)\times\Omega} T\left(\mathcal{P}^{\varepsilon}_{\psi}\right) \, \left(\varphi\,\mathcal{Q}^{\varepsilon}_{\phi}\right) \diff t \diff x .
$$
Clearly, up to a subsequence, $\mathcal{Q}^{\varepsilon}_{\phi} \weak \mathcal{Q}_{\phi}$ in $L^2(0,T;H^1(\Omega))$ because $\mathcal{Q}^{\varepsilon}_{\phi} = \Phi(\veps)$ (where $\Phi' = \phi$) and $\{\veps\}_{\varepsilon \in (0,1)}$ is bounded in $L^2(0,T;H^1(\Omega))$ cf. \eqref{PUB1} in Theorem \ref{theorem:well_posed}. Therefore, it is sufficient to prove that $T\left(\mathcal{P}^{\varepsilon}_{\phi}\right) \to T\left(\mathcal{P}_{\phi}^{\phantom{\varepsilon}}\right)$ strongly in $L^2(0,T;H^1(\Omega))$.\\

\noindent We want to apply Aubin-Lions Lemma for the case where time derivative is a measure cf. \cite[Corollary 7.9]{MR3014456}.
In view of the regularity estimate $$
\left\|T\left(\mathcal{P}^{\varepsilon}_{\psi}\right)(t,x)\right\|_{L^2(0,T;H^2(\Omega))} \leq C\, \left\|\mathcal{P}^{\varepsilon}_{\psi}(t,x) \right\|_{L^2(0,T;L^2(\Omega))},
$$
we only have to prove that the sequences of distributional time derivatives $\left\{\frac{\partial}{\partial t}T\left(\mathcal{P}^{\varepsilon}_{\psi} \right)\right\}_{\varepsilon \in (0,1)}$ and $\left\{\frac{\partial}{\partial t}\nabla T\left(\mathcal{P}^{\varepsilon}_{\psi} \right) \right\}_{\varepsilon \in (0,1)}$ are bounded in $C(0,T;X)^*$ for some separable Banach space $X$ such that $L^2(\Omega) \subset X^*$.\\

\noindent To this end, we note that equation \eqref{eq:PDE2_sat_by_kin_seq} implies that sequence $\left\{\frac{\partial}{\partial t}\mathcal{P}^{\varepsilon}_{\psi} \right\}_{\varepsilon \in (0,1)}$ is bounded in space $\left[C(0,T; H^k(\Omega))\right]^*$ for some $k > d$ so that $H^k(\Omega)$ embedds continuously into $L^{\infty}(\Omega)$ cf. \cite[Corollary 7.11]{MR1814364}. Then, we claim that equation \eqref{eq:auxillary_problem_laplacian} implies that
$$
\left\{\frac{\partial}{\partial t}T\left(\mathcal{P}^{\varepsilon}_{\psi}\right) \right\}_{\varepsilon \in (0,1)} \mbox{ is bounded in } \left[C(0,T; H^{k-2}(\Omega))\right]^*,
$$
$$
\left\{\frac{\partial}{\partial t}\nabla T\left(\mathcal{P}^{\varepsilon}_{\psi}\right) \right\}_{\varepsilon \in (0,1)} \mbox{ is bounded in } \left[C(0,T; H^{k-1}(\Omega))\right]^*.
$$
Indeed, if $\zeta(t,x)$ is a (vector-valued) smooth and compactly supported test function we have
\begin{align*}
&\int_{(0,T)\times\Omega} \nabla T\left(\mathcal{P}^{\varepsilon}_{\psi}\right) (t,x) \cdot \partial_t \zeta(t,x) \diff t \diff x = \\
& \qquad \qquad =\,- \int_{(0,T)\times\Omega} \nabla T\left(\mathcal{P}^{\varepsilon}_{\psi}\right) (t,x) \cdot \Big[\Delta T\left(\partial_t \zeta\right)(t,x) + \mu \, T\left(\partial_t \zeta\right)(t,x) \Big] \diff t \diff x \, \\
& \qquad \qquad = - \int_{(0,T)\times\Omega} \mathcal{P}^{\varepsilon}_{\psi} \, \partial_t \left(\DIV T(\zeta)\right) \diff t \diff x \leq C \, \| \DIV T(\zeta) \|_{C(0,T;H^k(\Omega))} \\  & \qquad \qquad \leq
C\, \| T(\zeta) \|_{C(0,T;H^{k+1}(\Omega))} \leq
C \, \| \zeta \|_{C(0,T;H^{k-1}(\Omega))}. \phantom{\int_{(0,T)\times\Omega}}
\end{align*}
Similar computation can be performed for the term $ \int_{(0,T)\times\Omega} T\left(\mathcal{P}^{\varepsilon}_{\psi}\right) (t,x) \, \partial_t \zeta(t,x) \diff t \diff x
$. This concludes the proof.
\end{proof}
\noindent We are in position to formulate a functional identity relating the kinetic functions $p$ and $q$.

\begin{thm}[Functional identity for $p$ and $q$]\label{thm:funct_id_forkandl}
Let $p$ and $q$ be given by \eqref{eq:def_of_pq_weak_lim}. Then, the following identity is satisfied in the sense of distributions
\begin{equation}\label{thmeq:main_identity}
\begin{split}
q(t,x,\xi)\,\int_{\R} 
p(t,x,\lambda) \, \delta_{\eta = F(\lambda)}(\eta)& \diff \lambda = \chi_{\eta}(\xi) \, \int_{\R}  p(t,x,\lambda) \,   \delta_{\eta = F(\lambda)}(\eta)  \diff \lambda
\\
& \phantom{=} + \chi_{\xi}(\eta)  \,q(t,x,\xi) + \chi_{\eta}(\xi) \, q(t,x,\eta) - q(t,x,\eta)\, q(t,x,\xi)  \phantom{\int_{\R}}\\
& \phantom{=} + \int_{\R} \int_{\R}    p(t,x,\lambda) \, \chi_{\lambda}(\tau) \, \delta_{\eta = F(\tau)}(\eta)  \,  \delta_{\xi = F(\lambda)}(\xi)  \, F'(\lambda) \diff \lambda \diff \tau.
\end{split}
\end{equation}
\end{thm}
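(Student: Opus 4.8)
The plan is to compute the weak$^*$ limit of the product
\[
q^\varepsilon(t,x,\xi)\,\Big[\int_\R p^\varepsilon(t,x,\tau)\,\delta_{\eta=F(\tau)}(\eta)\diff\tau + q^\varepsilon(t,x,\eta)\Big]
\]
in two different ways. On the one hand, Lemma~\ref{lem:compactness_product} asserts that it converges in the sense of distributions to $q(t,x,\xi)\big[\int_\R p(t,x,\tau)\,\delta_{\eta=F(\tau)}(\eta)\diff\tau + q(t,x,\eta)\big]$. On the other hand, I would expand the product using the fact that $p^\varepsilon(t,x,\cdot)=\chi_{\ueps(t,x)}$ and $q^\varepsilon(t,x,\cdot)=\chi_{\veps(t,x)}$ are characteristic functions, and rewrite it as a sum of terms each of which is a \emph{fixed} distribution paired against one of the \emph{weakly} converging sequences $q^\varepsilon$, $p^\varepsilon$ or $\int_\R p^\varepsilon(t,x,\lambda)\,\delta_{\eta=F(\lambda)}(\eta)\diff\lambda$, and therefore passes to the limit by linearity. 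Equating the two expressions and cancelling a common term yields~\eqref{thmeq:main_identity}.

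The elementary device is the pointwise identity, valid for every $a$ and a.e.\ $(\xi,\eta)$,
\[
\chi_a(\xi)\,\chi_a(\eta)=\chi_\eta(\xi)\,\chi_a(\eta)+\chi_\xi(\eta)\,\chi_a(\xi),
\]
proved by distinguishing $\xi\le\eta$ from $\eta<\xi$. Applied with $a=\veps$ it gives $q^\varepsilon(\xi)q^\varepsilon(\eta)=\chi_\eta(\xi)q^\varepsilon(\eta)+\chi_\xi(\eta)q^\varepsilon(\xi)$, whose weak$^*$ limit is $\chi_\eta(\xi)q(\eta)+\chi_\xi(\eta)q(\xi)$.

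For the remaining term $q^\varepsilon(\xi)\int_\R p^\varepsilon(\tau)\,\delta_{\eta=F(\tau)}(\eta)\diff\tau$ I would proceed in three steps. First, replace $q^\varepsilon(\xi)=\chi_{\veps}(\xi)$ by $\chi_{F(\ueps)}(\xi)$: since $\int_\R\big|\chi_{\veps}(\xi)-\chi_{F(\ueps)}(\xi)\big|\diff\xi=|\veps-F(\ueps)|\to 0$ in $L^1((0,T)\times\Omega)$ by Corollary~\ref{cor:converg_A_v}, and the factor $\int_\R p^\varepsilon(\lambda)\,\delta_{\eta=F(\lambda)}(\eta)\diff\lambda$ is uniformly bounded in the space of measures, the error vanishes in the sense of distributions. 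Second, by the fundamental identity~\eqref{sect3:fundament_id_kinetic_form} used as in the proof of Lemma~\ref{lem:prop_k_and_l} one has $\chi_{F(\ueps)}(\xi)=\int_\R p^\varepsilon(\tau)\,F'(\tau)\,\delta_{\xi=F(\tau)}(\xi)\diff\tau$, so the term becomes
\[
\int_\R\int_\R p^\varepsilon(\tau)\,p^\varepsilon(\lambda)\,F'(\tau)\,\delta_{\xi=F(\tau)}(\xi)\,\delta_{\eta=F(\lambda)}(\eta)\diff\tau\diff\lambda .
\]
Third, apply the elementary identity once more, now as $p^\varepsilon(\tau)p^\varepsilon(\lambda)=\chi_\lambda(\tau)p^\varepsilon(\lambda)+\chi_\tau(\lambda)p^\varepsilon(\tau)$: integrating the piece carrying $\chi_\lambda(\tau)$ over $\tau$ via~\eqref{sect3:fundament_id_kinetic_form} leaves $\chi_\eta(\xi)\int_\R p^\varepsilon(\lambda)\,\delta_{\eta=F(\lambda)}(\eta)\diff\lambda$, converging to $\chi_\eta(\xi)\int_\R p(\lambda)\,\delta_{\eta=F(\lambda)}(\eta)\diff\lambda$; the piece carrying $\chi_\tau(\lambda)$ is linear in $p^\varepsilon$ and, after relabelling $\tau\leftrightarrow\lambda$, converges to $\int_\R\int_\R p(\lambda)\,\chi_\lambda(\tau)\,\delta_{\eta=F(\tau)}(\eta)\,\delta_{\xi=F(\lambda)}(\xi)\,F'(\lambda)\diff\lambda\diff\tau$.

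Adding the contributions of the two pieces, the weak$^*$ limit computed by hand equals the right-hand side of~\eqref{thmeq:main_identity} \emph{plus} the extra term $q(t,x,\xi)\,q(t,x,\eta)$, while by Lemma~\ref{lem:compactness_product} it also equals $q(t,x,\xi)\int_\R p(t,x,\lambda)\,\delta_{\eta=F(\lambda)}(\eta)\diff\lambda + q(t,x,\xi)\,q(t,x,\eta)$; cancelling $q(t,x,\xi)\,q(t,x,\eta)$ produces~\eqref{thmeq:main_identity}. The main obstacle, and the step requiring most care, is the rigorous bookkeeping of the measures $\delta_{\xi=F(\tau)}$ and $\delta_{\eta=F(\lambda)}$: because $F$ is non-monotone, relations such as $\int_0^\lambda F'(\tau)\,\delta_{\xi=F(\tau)}(\xi)\diff\tau=\chi_{F(\lambda)}(\xi)$ must be verified by testing against smooth functions and invoking the fundamental theorem of calculus together with $F(0)=0$, not by a naive substitution; and one must carefully distinguish the single genuine product of two weakly convergent sequences — the left-hand side, for which the compensated compactness Lemma~\ref{lem:compactness_product} is indispensable — from the products of a fixed distribution with one weakly convergent sequence on the right, which are all harmless.
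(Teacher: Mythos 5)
Your proposal is correct and takes essentially the same route as the paper: both compute the limit of $q^{\varepsilon}\big[\int_{\R} p^{\varepsilon}\,\delta_{\eta=F(\lambda)}(\eta)\diff\lambda+q^{\varepsilon}\big]$ twice, once by the compensated compactness Lemma~\ref{lem:compactness_product} and once by using $\veps-F(\ueps)\to 0$ (Corollary~\ref{cor:converg_A_v}) to reduce the product to expressions linear in $p^{\varepsilon}$ and $q^{\varepsilon}$, then equate the two limits. The only difference is bookkeeping: the paper tests with primitives $\Psi,\Phi,\Theta$, expands $[\Psi(\ueps)+\Phi(F(\ueps))]\,\Theta(F(\ueps))$ by the product rule and simplifies at the end via \eqref{eq:kandlequation} in Lemma~\ref{lem:prop_k_and_l}, whereas your identity $\chi_a(\xi)\chi_a(\eta)=\chi_\eta(\xi)\chi_a(\eta)+\chi_\xi(\eta)\chi_a(\xi)$ is the kinetic form of that same product rule and, since you replace $\veps$ by $F(\ueps)$ only in the cross term, you land on \eqref{thmeq:main_identity} directly.
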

\begin{proof}
Consider two smooth test functions $\phi$, $\varphi: \R \to \R$ and define
$$
\Psi(\lambda):= \int_0^{\lambda} \phi(F(\tau))\diff \tau, \qquad \Phi(\lambda):= \int_0^{\lambda} \phi(\tau) \diff \tau, \qquad \Theta(\lambda) := \int_0^{\lambda} \varphi(\tau) \diff \tau.
$$
Now, the plan is to consider the limit as $\varepsilon \to 0$ of the expression
$$
\left[\Psi(\ueps) + \Phi(\veps)\right]\,\Theta(\veps).
$$
From the kinetic representation~\eqref{sect3:fundament_id_kinetic_form},  we can write
$$
\Psi(\ueps) = \int_{\R} \phi(F(\lambda)) \, p^{\varepsilon}(t,x,\lambda) \diff \lambda = \int_{\R} \int_{\R} \phi(\eta) \, \delta_{\eta = F(\tau)}(\eta) \, p^{\varepsilon}(t,x,\lambda) \diff \eta \diff \lambda,
$$
$$
\Phi(\veps) = \int_{\R} \phi(\eta) \, q^{\varepsilon}(t,x,\eta) \diff \eta, \qquad \qquad  \Theta(\veps) = \int_{\R} \varphi(\xi) \, q^{\varepsilon}(t,x,\xi) \diff \xi.
$$
Onthe one hand, using Lemma \ref{lem:compactness_product}, we obtain
$$
\left[\Psi(\ueps) + \Phi(\veps)\right]\,\Theta(\veps) \weaks \int_{\R} \int_{\R} \phi(\eta)\, \varphi(\xi)\, q(t,x,\xi)\,\left[\int_{\R} 
p(t,x,\lambda) \, \delta_{\eta = F(\lambda)}(\eta) \diff \lambda + q(t,x,\eta)\right] \diff \eta \diff \xi.
$$
On the other hand, we can replace the term $\left[\Psi(\ueps) + \Phi(\veps)\right]\,\Theta(\veps)$ with $\left[\Psi(\ueps) + \Phi(F(\ueps))\right]\,\Theta(F(\ueps))$ because $\veps - F(\ueps) \to 0$ strongly. Therefore, we can use the kinetic representation
$$
\left[\Psi(\ueps) + \Phi(F(\ueps))\right]\,\Theta(F(\ueps)) = \int_{\R} p^{\varepsilon}(t,x,\lambda) \big[\left(\Psi(\lambda) + \Phi(F(\lambda))\right) \, \Theta(F(\lambda)) \big]' \diff \lambda =: A^{\varepsilon} + B^{\varepsilon}+ C^{\varepsilon}
$$
where these three terms come from the differentiation of the product. We have
\begin{align*}
A^{\varepsilon} &= \int_{\R} p^{\varepsilon}(t,x,\lambda) \, \Theta(F(\lambda)) \, \left(\Psi'(\lambda) + \Phi'(F(\lambda)) \, F'(\lambda) \right) \diff \lambda \\
&= \int_{\R} p^{\varepsilon}(t,x,\lambda) \, \left[\int_{\R} \chi_{F(\lambda)}(\xi) \, \varphi(\xi) \diff \xi \right] \, \left(\phi(F(\lambda)) + \phi(F(\lambda)) \, F'(\lambda) \right) \diff \lambda \\
&= \int_{\R} p^{\varepsilon}(t,x,\lambda) \, \left[\int_{\R} \chi_{F(\lambda)}(\xi) \, \varphi(\xi) \diff \xi \right] \, \left[\int_{\R} \phi(\eta) \, \delta_{\eta = F(\lambda)}(\eta) \diff \lambda \right]\, \left(1 + F'(\lambda) \right) \diff \lambda \\
&= \int_{\R} \int_{\R} \int_{\R} \varphi(\xi)  \,  \phi(\eta) \, p^{\varepsilon}(t,x,\lambda) \, \chi_{\eta}(\xi) \,  \delta_{\eta = F(\lambda)}(\eta) \, \left(1 + F'(\lambda) \right) \diff \lambda \diff \eta \diff \xi,
\end{align*}
\begin{align*}
B^{\varepsilon} &= \int_{\R} p^{\varepsilon}(t,x,\lambda) \,     \Psi(\lambda)  \, \Theta'(F(\lambda)) \, F'(\lambda) \diff \lambda \\
&= \int_{\R} p^{\varepsilon}(t,x,\lambda) \,      \left[\int_{\R} \int_{\R} \chi_{\lambda}(\tau) \, \phi(\eta) \, \delta_{\eta = F(\tau)}(\eta) \diff \tau \diff \eta \right] \, \varphi(F(\lambda)) \, F'(\lambda) \diff \lambda \\
&= \int_{\R} p^{\varepsilon}(t,x,\lambda) \,      \left[\int_{\R} \int_{\R} \chi_{\lambda}(\tau) \, \phi(\eta) \, \delta_{\eta = F(\tau)}(\eta) \diff \tau \diff \eta \right] \, \left[\int_{\R} \varphi(\xi) \, \delta_{\xi = F(\lambda)}(\xi) \diff \xi \right] \, F'(\lambda) \diff \lambda\\
&= \int_{\R}\int_{\R}\int_{\R} \int_{\R}   \phi(\eta) \, \varphi(\xi) \, p^{\varepsilon}(t,x,\lambda) \, \chi_{\lambda}(\tau) \, \delta_{\eta = F(\tau)}(\eta)  \,  \delta_{\xi = F(\lambda)}(\xi)  \, F'(\lambda) \diff \lambda \diff \tau \diff \xi \diff \eta 
\end{align*}
and
\begin{align*}
C^{\varepsilon} &= \int_{\R} p^{\varepsilon}(t,x,\lambda) \, \Phi(F(\lambda)) \, \Theta'(F(\lambda)) \, F'(\lambda) \diff \lambda \\
&= \int_{\R} p^{\varepsilon}(t,x,\lambda) \, \left[\int_{\R} \phi(\eta) \, \chi_{F(\lambda)}(\eta) \diff \eta \right] \, \left[\int_{\R} \varphi(\xi) \, \delta_{\xi = F(\lambda)}(\xi) \diff \xi \right] \, F'(\lambda) \diff \lambda \\
&= \int_{\R}\int_{\R} \int_{\R}\phi(\eta) \,\varphi(\xi) \, p^{\varepsilon}(t,x,\lambda) \,  \chi_{\xi}(\eta)  \,  \delta_{\xi = F(\lambda)}(\xi)   \, F'(\lambda) \diff \lambda \diff \xi \diff \eta.
\end{align*}
Passing to the weak$^*$ limit $\varepsilon \to 0$ in these three terms, we obtain
\begin{equation}\label{eq:identity_pq_before_simpl}
\begin{split}
&q(t,x,\xi)\,\int_{\R} 
p(t,x,\lambda) \, \delta_{\eta = F(\lambda)}(\eta) \diff \lambda + q(t,x,\eta)\, q(t,x,\xi) =\\&\qquad = \chi_{\xi}(\eta)  \, \int_{\R} p(t,x,\lambda) \, \delta_{\xi = F(\lambda)}(\xi)   \, F'(\lambda) \diff \lambda + \chi_{\eta}(\xi) \, \int_{\R}  p(t,x,\lambda) \,   \delta_{\eta = F(\lambda)}(\eta) \, \left(1 + F'(\lambda) \right) \diff \lambda\\
& \qquad \phantom{=} + \int_{\R} \int_{\R}    p(t,x,\lambda) \, \chi_{\lambda}(\tau) \, \delta_{\eta = F(\tau)}(\eta)  \,  \delta_{\xi = F(\lambda)}(\xi)  \, F'(\lambda) \diff \lambda \diff \tau
\end{split}
\end{equation}
understood in the sense of distributions. Using \eqref{eq:kandlequation} in Lemma~\ref{lem:prop_k_and_l}
we simplify \eqref{eq:identity_pq_before_simpl} to \eqref{thmeq:main_identity}. 
\end{proof}

\section{Proof of the Young measure representation}
\label{sect:prf_main_result}

\noindent We are in position to prove the representation of $u$ by Young measures as stated in Theorem \ref{thm_main_id_YM}. It turns out, that equations \eqref{eq:kandlequation} in Lemma~\ref{lem:prop_k_and_l} and \eqref{thmeq:main_identity} completely characterize the kinetic function $q$ (and $p$). The proof is based on the identity \eqref{thmeq:main_identity} with fixed $(t,x) \in (0,T)\times\Omega$ so we simplify notations.
\begin{Nott}
In this section, we fix $(t,x) \in (0,T)\times\Omega$ and write $p(\xi)$ and $q(\xi)$ for $p(t,x,\xi)$ and $q(t,x,\xi)$ respectively.
\end{Nott}
\noindent In order to translate distributional identity \eqref{thmeq:main_identity} into a functional one, we use the following 
\begin{lem}[Adjoint distribution to $\delta_{\xi = F(\tau)}(\xi)$]\label{lem:int_by_subst_part1}
There exists a distribution 
$$
\delta_{\xi = F(\tau)}^*(\tau):=\sum_{i=1}^3 \delta_{\tau=S_i(\xi)}(\tau)\, \left|S_i'(\xi)\right|
$$
with  $S_1$, $S_2$, $S_3$ defined in Notation \ref{intro:not_inv_phi}, such that, for all test functions $\psi$, $\Theta$, 
$$
\int_{\R} \Theta(\tau) \int_{\R} \Psi(\xi)\, \delta_{\xi = F(\tau)}(\xi) \diff \xi \diff \tau = \int_{\R} \Psi(\xi) \int_{\R} \Theta(\tau)\,\delta_{\xi = F(\tau)}^*(\tau) \diff \tau \diff \xi.
$$
\end{lem}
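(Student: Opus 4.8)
The plan is to recognize Lemma \ref{lem:int_by_subst_part1} as a branchwise change-of-variables identity for the three intervals of strict monotonicity of $F$. Since both sides of the asserted equality are bilinear and continuous in $(\Theta,\Psi)$, it suffices to establish it for continuous, compactly supported test functions. Evaluating the inner integral on the left via $\int_{\R}\Psi(\xi)\,\delta_{\xi=F(\tau)}(\xi)\diff\xi=\Psi(F(\tau))$, the left-hand side becomes $\int_{\R}\Theta(\tau)\,\Psi(F(\tau))\diff\tau$, and the task reduces to rewriting this single integral as one against $\Psi(\xi)\diff\xi$.

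Next I would split $\R=I_1\cup I_2\cup I_3$ up to the two turning points $\alpha_{+},\beta_{-}$, which are Lebesgue-null. On each $I_i$ the map $F$ is a $C^1$ strictly monotone bijection onto $J_i=F(I_i)$ with inverse $S_i$; by the constant extension of $S_i$ one has $S_i'\equiv 0$ off $J_i$. In $\int_{I_i}\Theta(\tau)\,\Psi(F(\tau))\diff\tau$ I would substitute $\xi=F(\tau)$, i.e. $\tau=S_i(\xi)$, $\diff\tau=S_i'(\xi)\diff\xi$, absorbing the reversal of orientation on $I_2$ (where $S_2$ is decreasing) into an absolute value, to get $\int_{I_i}\Theta(\tau)\,\Psi(F(\tau))\diff\tau=\int_{J_i}\Theta(S_i(\xi))\,\Psi(\xi)\,|S_i'(\xi)|\diff\xi$. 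Summing over $i$ and noting that $\mathds{1}_{J_i}(\xi)\,\Theta(S_i(\xi))\,|S_i'(\xi)|=\int_{\R}\Theta(\tau)\,\delta_{\tau=S_i(\xi)}(\tau)\,|S_i'(\xi)|\diff\tau$ (the two expressions coinciding trivially off $J_i$, where $|S_i'|=0$) yields
$$
\int_{\R}\Theta(\tau)\,\Psi(F(\tau))\diff\tau=\int_{\R}\Psi(\xi)\int_{\R}\Theta(\tau)\sum_{i=1}^{3}\delta_{\tau=S_i(\xi)}(\tau)\,|S_i'(\xi)|\diff\tau\diff\xi,
$$
which is the claimed identity with $\delta^{*}_{\xi=F(\tau)}(\tau)=\sum_{i=1}^{3}\delta_{\tau=S_i(\xi)}(\tau)\,|S_i'(\xi)|$.

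The main obstacle I anticipate is making the substitution rigorous near the turning points $\alpha_{+},\beta_{-}$, where $F'$ vanishes, so that $S_i'$ is unbounded near the endpoints $f_{\pm}$ of $J_i$ and $S_i$ is not Lipschitz; one must check that $S_i$ is absolutely continuous on compact subintervals of $\overline{J_i}$ with $S_i'\in L^1_{\mathrm{loc}}$, so that $\delta^{*}_{\xi=F(\tau)}(\tau)$ is a genuine locally finite measure and the substitution rule for monotone absolutely continuous functions is legitimate. I would obtain this from the fact that $F\in C^1$ is locally absolutely continuous: substituting back shows $\int_{F(a)}^{F(b)}\diff\xi/F'(S_i(\xi))=b-a$ for every $[a,b]\subset I_i$, which establishes at once the local integrability of $S_i'=1/F'(S_i(\cdot))$ and that its integral recovers the increments of $S_i$; alternatively, once the left-hand side is recognized as finite for compactly supported $\Theta,\Psi$, finiteness of the transformed integral follows automatically. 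The compact support of $\Psi$ removes any issue with the unboundedness of $J_1$ and $J_3$, and the two turning points contribute nothing.
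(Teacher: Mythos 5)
Your proposal is correct and follows essentially the same route as the paper: evaluate the inner integral to get $\Psi(F(\tau))$, split $\R$ into the monotonicity intervals $I_1, I_2, I_3$, change variables $\xi = F(\tau)$ on each piece to obtain $\sum_i \int_{J_i} \Theta(S_i(\xi))\,\Psi(\xi)\,|S_i'(\xi)|\diff\xi$, and use the constant extension of the $S_i$ (so $S_i' = 0$ off $J_i$) to assemble the adjoint distribution. Your additional care about local integrability of $S_i'$ near the turning points is a welcome refinement the paper leaves implicit, but it does not change the argument.
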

\begin{proof}
By definition
$$
\int_{\R} \Theta(\tau) \int_{\R} \Psi(\xi)\, \delta_{\xi = F(\tau)}(\xi) \diff \xi \diff \tau = \int_{\R} \Theta(\tau) \, \Psi(F(\tau)) \diff \tau
$$
Let $\{I_i\}_{i=1}^3$ and $\{J_i\}_{i=1}^3$ be as in Notation \ref{intro:not_inv_phi}. Then, we can integrate by substitution
$$
\int_{\R} \Theta(\tau) \, \Psi(F(\tau)) \diff \tau = 
\sum_{i=1}^3 \int_{I_i} \Theta(\tau) \, \Psi(F(\tau)) \diff \tau = \sum_{i=1}^3 \int_{J_i} \Theta(S_i(\xi)) \, \Psi(\xi) \, \left|S_i'(\xi) \right| \diff \xi. 
$$
As inverses are extended by a constant to the whole of $\R$ we can write
$$
\int_{\R} \Theta(\tau) \, \Psi(F(\tau)) \diff \tau = \sum_{i=1}^3 \int_{\R} \Theta(S_i(\xi)) \, \Psi(\xi) \, \left|S_i'(\xi)\right| \diff \xi = \int_{\R} \Psi(\xi) \int_{\R} \Theta(\tau)\,\delta_{\xi = F(\tau)}^*(\tau) \diff \tau \diff \xi. 
$$
\end{proof}
\begin{cor}\label{rem:practical_connection_betkandl} 
Identity \eqref{eq:kandlequation} in Lemma~\ref{lem:prop_k_and_l} can be also written explicitly as 
\begin{equation}
q(\xi) =  \sum_{i=1}^3 (-1)^{i+1}\, p(S_i(\xi))\, \mathds{1}_{J_i}(\xi).
\end{equation}
This follows from Lemma \ref{lem:int_by_subst_part1} and an observation that $F'(S_i(\lambda))\,|S_i'(\lambda)| = (-1)^{i+1}\,\mathds{1}_{J_i}(\lambda)$ where $\{J_i\}_{i=3}^3$ are defined in Notation \ref{intro:not_inv_phi}.
\end{cor}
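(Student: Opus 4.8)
The plan is to test the distributional identity \eqref{eq:kandlequation} against an arbitrary test function $\Psi=\Psi(\xi)$ and convert the right-hand side into an ordinary function of $\xi$ via the substitution rule of Lemma~\ref{lem:int_by_subst_part1}. Pairing \eqref{eq:kandlequation} with $\Psi$ and using $\int_{\R}\Psi(\xi)\,\delta_{\xi=F(\lambda)}(\xi)\diff\xi=\Psi(F(\lambda))$ gives
$$
\int_{\R} q(\xi)\,\Psi(\xi)\diff\xi=\int_{\R} p(\lambda)\,F'(\lambda)\,\Psi(F(\lambda))\diff\lambda .
$$
The function $\Theta(\tau):=p(\tau)\,F'(\tau)$ is bounded and compactly supported (recall $p$ is supported in $(0,M)$, cf.\ Lemma~\ref{lem:prop_k_and_l}\,\eqref{X2}, and $F\in C^1$), so Lemma~\ref{lem:int_by_subst_part1} — whose proof only uses the change-of-variables formula and hence extends verbatim to bounded compactly supported $\Theta$ — rewrites the right-hand side as
$$
\int_{\R}\Psi(\xi)\sum_{i=1}^3 p\big(S_i(\xi)\big)\,F'\big(S_i(\xi)\big)\,\big|S_i'(\xi)\big|\diff\xi .
$$

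Next I would compute $F'(S_i(\xi))\,|S_i'(\xi)|$. Outside $J_i$ the function $S_i$ is constant by the convention of Notation~\ref{intro:not_inv_phi}, so $S_i'\equiv 0$ and the product vanishes. On $J_i$ one has $F(S_i(\xi))=\xi$; differentiating (licit for a.e.\ $\xi$, the exceptional set being the Lebesgue-null set of critical values of $F$) yields $F'(S_i(\xi))\,S_i'(\xi)=1$, hence $F'(S_i(\xi))\,|S_i'(\xi)|=\operatorname{sign}\big(F'(S_i(\xi))\big)$ for a.e.\ $\xi\in J_i$. By Assumption~\ref{ass:nonlinearity}\,\eqref{ass:mon}, $F$ is strictly increasing on $I_1$ and $I_3$ and strictly decreasing on $I_2$, so this sign equals $+1,-1,+1$ for $i=1,2,3$, i.e.\ $(-1)^{i+1}$. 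Therefore $F'(S_i(\xi))\,|S_i'(\xi)|=(-1)^{i+1}\mathds{1}_{J_i}(\xi)$ a.e., and substituting above yields
$$
\int_{\R} q(\xi)\,\Psi(\xi)\diff\xi=\int_{\R}\Psi(\xi)\sum_{i=1}^3(-1)^{i+1}\,p\big(S_i(\xi)\big)\,\mathds{1}_{J_i}(\xi)\diff\xi .
$$
Since $\Psi$ is arbitrary and both sides are bounded measurable functions of $\xi$ (for a.e.\ fixed $(t,x)$), the claimed identity follows.

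The only mildly delicate point — and the place where I would be slightly careful — is the behaviour near the critical values of $F$, where $F'$ vanishes and $S_i'$ becomes unbounded: the substitution in Lemma~\ref{lem:int_by_subst_part1} is nonetheless valid because $F$ restricted to each $I_i$ is a monotone bijection onto $J_i$, and the set of critical values carries no Lebesgue mass, so it does not affect any of the integral identities. Beyond this bookkeeping, no genuine obstacle is expected; the corollary is essentially a transcription of \eqref{eq:kandlequation} through the explicit form of $\delta^*_{\xi=F(\tau)}$.
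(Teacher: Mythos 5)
Your argument is correct and is essentially the paper's own: pair \eqref{eq:kandlequation} with a test function, rewrite the right-hand side via the adjoint distribution of Lemma~\ref{lem:int_by_subst_part1} (i.e.\ the change of variables $\xi=F(\tau)$ on each $I_i$), and use the sign computation $F'(S_i(\xi))\,|S_i'(\xi)|=(-1)^{i+1}\mathds{1}_{J_i}(\xi)$, which is exactly the observation the corollary invokes. Your additional care about extending the substitution to the bounded integrand $p\,F'$ and about the null set of critical values is sound but does not alter the route.
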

\begin{lem}[Explicit formulation of the kinetic identity]
The functional identity \eqref{thmeq:main_identity} can be written explicitly as
\begin{equation}\label{thmeq:main_identity_loctx}
\left[q(\xi) - \chi_{\eta}(\xi) \right] \, \mathcal{S}(\eta) = \chi_{\xi}(\eta)\,q(\xi) + \chi_{\eta}(\xi)\,q(\eta) - q(\eta)\,q(\xi) +   \mathcal{R}(\eta,\xi),
\end{equation} where 
\begin{equation}\label{easier_formula_for_S_and_R}
\mathcal{S}(\eta) = \sum_{i=1}^3 p(S_i(\eta))\, |S_i'(\eta)|, \quad  \mathcal{R}(\eta,\xi)=\sum_{i=1}^3 (-1)^{i+1} \,
p(S_i(\xi)) \, \mathds{1}_{J_i}(\xi)\, \sum_{j=1}^3 \chi_{S_i(\xi)}(S_j(\eta)) \, |S_j'(\eta)|.
\end{equation}
\end{lem}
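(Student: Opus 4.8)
The plan is to turn the distributional identity \eqref{thmeq:main_identity} into the explicit form \eqref{thmeq:main_identity_loctx} by resolving every delta distribution in it into a locally integrable kernel in the variables $(\eta,\xi)$, using the change of variables packaged in Lemma~\ref{lem:int_by_subst_part1}. Concretely, I would test \eqref{thmeq:main_identity} against an arbitrary product $\Psi(\xi)\,\Theta(\eta)$ of smooth compactly supported functions (their span being dense among test functions on $\R^2$), carry out each integral against a delta one variable at a time, and read off the kernel. The three terms of \eqref{thmeq:main_identity} that carry no delta, namely $\chi_{\xi}(\eta)\,q(\xi)$, $\chi_{\eta}(\xi)\,q(\eta)$ and $-q(\eta)\,q(\xi)$, are already honest bounded functions of $(\eta,\xi)$ and pass to \eqref{thmeq:main_identity_loctx} unchanged; the work concerns the two terms involving $\int_{\R}p(\lambda)\,\delta_{\eta=F(\lambda)}(\eta)\diff\lambda$ and the double integral.

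For the single-integral kernel, Lemma~\ref{lem:int_by_subst_part1} with its $\Theta$ taken to be $p$ (and its dummy variable renamed $\eta$) yields $\int_{\R}p(\lambda)\,\Theta(F(\lambda))\diff\lambda=\int_{\R}\Theta(\eta)\,\mathcal{S}(\eta)\diff\eta$ for every test function $\Theta$, i.e. $\int_{\R}p(\lambda)\,\delta_{\eta=F(\lambda)}(\eta)\diff\lambda=\mathcal{S}(\eta)$ with $\mathcal{S}$ as in \eqref{easier_formula_for_S_and_R} (the support indicators $\mathds{1}_{J_i}$ being automatic since $S_i'$ vanishes outside $J_i$). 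Thus the left-hand side of \eqref{thmeq:main_identity} is $q(\xi)\,\mathcal{S}(\eta)$ and its first right-hand term is $\chi_{\eta}(\xi)\,\mathcal{S}(\eta)$; transposing the latter produces the factor $\bigl[q(\xi)-\chi_{\eta}(\xi)\bigr]\,\mathcal{S}(\eta)$ on the left of \eqref{thmeq:main_identity_loctx}.

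The main step is the double integral, which after testing against $\Psi(\xi)\,\Theta(\eta)$ reads $\int_{\R}\int_{\R}p(\lambda)\,\chi_{\lambda}(\tau)\,\Psi(F(\lambda))\,\Theta(F(\tau))\,F'(\lambda)\diff\lambda\,\diff\tau$. I would resolve the two deltas successively. First the $\lambda$-integral: changing variables $\xi=F(\lambda)$ on each monotonicity interval $I_i$ and using, as in Corollary~\ref{rem:practical_connection_betkandl}, that $F'(S_i(\xi))\,|S_i'(\xi)|=(-1)^{i+1}\,\mathds{1}_{J_i}(\xi)$, it becomes $\sum_{i=1}^3(-1)^{i+1}\int_{\R}\mathds{1}_{J_i}(\xi)\,p(S_i(\xi))\,\chi_{S_i(\xi)}(\tau)\,\Psi(\xi)\diff\xi$; note that $\chi_{\lambda}(\tau)$, which still couples $\lambda$ to $\tau$ at this stage, simply turns into $\chi_{S_i(\xi)}(\tau)$. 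Then for the remaining $\tau$-integral, apply Lemma~\ref{lem:int_by_subst_part1} once more, now with its $\Theta$ equal to $\tau\mapsto\chi_{S_i(\xi)}(\tau)$ and its dummy renamed $\eta$, replacing $\int_{\R}\chi_{S_i(\xi)}(\tau)\,\Theta(F(\tau))\diff\tau$ by $\int_{\R}\Theta(\eta)\sum_{j=1}^3\chi_{S_i(\xi)}(S_j(\eta))\,|S_j'(\eta)|\diff\eta$. Collecting the two steps and using Fubini, the double-integral term equals $\int_{\R}\int_{\R}\Psi(\xi)\,\Theta(\eta)\,\mathcal{R}(\eta,\xi)\diff\eta\,\diff\xi$ with $\mathcal{R}$ exactly as in \eqref{easier_formula_for_S_and_R}.

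Assembling the three contributions shows that \eqref{thmeq:main_identity}, tested against an arbitrary $\Psi(\xi)\,\Theta(\eta)$, is precisely \eqref{thmeq:main_identity_loctx} tested the same way; since all the kernels ($q$, $\chi$, $\mathcal{S}$, $\mathcal{R}$) are locally integrable in $(\eta,\xi)$ — the factors $|S_i'|$ being integrable even though they can blow up at the points where $F'$ vanishes — the identity holds pointwise almost everywhere. The one delicate point is the bookkeeping in the double-integral step: one must change variables in $\lambda$ \emph{before} touching the $\tau$-delta, keep the orientation signs $(-1)^{i+1}$ and the support indicators $\mathds{1}_{J_i}$ straight, and invoke Lemma~\ref{lem:int_by_subst_part1} with the correct assignment of its dummy variables — any slip there corrupts the definition of $\mathcal{R}$.
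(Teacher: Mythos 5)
Your argument is correct and follows essentially the same route as the paper: both translate the distributional identity \eqref{thmeq:main_identity} into a pointwise one by resolving the deltas through the change of variables of Lemma~\ref{lem:int_by_subst_part1} (the adjoint distribution $\delta^*_{\xi=F(\tau)}$), and both identify $\mathcal{R}$ via the same key identity $F'(S_i(\xi))\,|S_i'(\xi)|=(-1)^{i+1}\,\mathds{1}_{J_i}(\xi)$. Your version merely spells out the paper's computation by testing against products $\Psi(\xi)\,\Theta(\eta)$ and reading off the locally integrable kernels, which is a faithful, slightly more explicit rendering of the same proof.
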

\begin{proof}
Using adjoint distribution from Lemma \ref{lem:int_by_subst_part1}, distributional identity \eqref{thmeq:main_identity} can be reformulated as follows. For a.e. $\eta,\, \xi > 0$, it holds
\begin{equation*}
\begin{split}
q(\xi) \, \int_{\R} p(\lambda) \, \delta_{\eta = F(\lambda)}^*(\lambda)   \diff \lambda &=  \, \chi_{\eta}(\xi)\,\int_{\R} 
p(\lambda) \, \delta_{\eta = F(\lambda)}^*(\lambda)
\diff \lambda \,+\\&+  \int_{\R} \int_{\R}  p(\lambda) \,    \chi_{\lambda}(\tau) \, F'(\lambda) \, \delta_{\eta= F(\tau)}^*(\tau) \, \delta_{\xi=F(\lambda)}^*(\lambda) \diff \tau  \diff \lambda\\
&+ \chi_{\xi}(\eta)\,q(\xi) + \chi_{\eta}(\xi)\,q(\eta) - q(\eta)\,q(\xi). \phantom{\int_{\R}}
\end{split}
\end{equation*}
If we define 
$$
\mathcal{S}(\eta) = \int_{\R} p(\lambda) \, \delta_{F(\tau)=\eta}^*(\tau) \diff \tau, \qquad  \mathcal{R}(\eta,\xi)=\int_{\R} \int_{\R}  p(\lambda) \,    \chi_{\lambda}(\tau) \, F'(\lambda) \, \delta_{F(\tau)=\eta}^*(\tau) \, \delta_{F(\lambda)=\xi}^*(\lambda) \diff \tau  \diff \lambda,
$$
we obtain \eqref{thmeq:main_identity_loctx} and we just have to prove the claimed formulas for $\mathcal{S}(\eta)$ and $\mathcal{R}(\eta,\xi)$ as in \eqref{easier_formula_for_S_and_R}. Using Lemma \ref{lem:int_by_subst_part1}, we have
$$
\mathcal{S}(\eta) = \sum_{i=1}^3 p(S_i(\eta))\, |S_i'(\eta)|.
$$
For $\mathcal{R}(\eta,\xi)$ we additionally note that $ F'(S_i(\xi)) \, |S_i'(\xi)| = (-1)^{i+1} \, \mathds{1}_{J_i}(\xi)$. Hence,
\begin{equation*}
\begin{split}
&\mathcal{R}(\eta,\xi) = \sum_{i=1}^3
p(S_i(\xi))  \, F'(S_i(\xi)) \, |S_i'(\xi)|\, \sum_{j=1}^3 \chi_{S_i(\xi)}(S_j(\eta)) \, |S_j'(\eta)| = \\ & \qquad \qquad \qquad \qquad = 
\sum_{i=1}^3 (-1)^{i+1} \,
p(S_i(\xi)) \, \mathds{1}_{J_i}(\xi)\, \sum_{j=1}^3 \chi_{S_i(\xi)}(S_j(\eta)) \, |S_j'(\eta)|.
\end{split}
\end{equation*}
\end{proof}
\noindent Formula for $\mathcal{R}(\eta,\xi)$ seems to be complicated. We compute its value for $\eta$ and $\xi$ in unstable region below.
\begin{lem}[$\mathcal{R}(\eta,\xi)$ in the unstable region]\label{R_unstable_region} Let $\mathcal{R}(\eta,\xi)$ be defined with \eqref{easier_formula_for_S_and_R}. Moreover, let $\xi_1$, $\eta$ and $\xi_2$ be such that $f_{-} < \xi_1 < \eta <  \xi_2 < f_{+}$. Then,
\begin{equation}\label{eq:formulaforRetaxi1}
\mathcal{R}(\eta, \xi_1) = \big[p(S_3(\xi_1)) - p(S_2(\xi_1))  \big] \, \big[S_1'(\eta) - S_2'(\eta)\big],
\end{equation}
\begin{equation}\label{eq:formulaforRetaxi2}
\mathcal{R}(\eta,\xi_2) = q(\xi_2) \, S_1'(\eta) + p(S_3(\xi_2)) \, \left[S_3'(\eta) - S_2'(\eta)\right].
\end{equation}
\end{lem}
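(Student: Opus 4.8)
The plan is a direct evaluation of the double sum defining $\mathcal{R}$ in \eqref{easier_formula_for_S_and_R}, organized around two elementary facts. First, since $\xi_1,\eta,\xi_2$ all lie in the unstable band $(f_{-},f_{+})$ and $J_1=(-\infty,f_{+})$, $J_2=(f_{-},f_{+})$, $J_3=(f_{-},\infty)$, every factor $\mathds{1}_{J_i}(\xi)$ with $\xi\in(f_{-},f_{+})$ equals $1$; hence for such $\xi$
\[
\mathcal{R}(\eta,\xi)=\sum_{i=1}^3(-1)^{i+1}\,p(S_i(\xi))\sum_{j=1}^3\chi_{S_i(\xi)}(S_j(\eta))\,|S_j'(\eta)|,
\]
so the computation reduces to deciding, for each of the nine pairs $(i,j)$, whether the indicator $\chi_{S_i(\xi)}(S_j(\eta))=\mathds{1}_{0<S_j(\eta)\le S_i(\xi)}$ equals $0$ or $1$.

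The two facts driving this are: (a) the branch ordering $S_1(\zeta)<\alpha_{+}<S_2(\zeta)<\beta_{-}<S_3(\zeta)$, valid for all $\zeta$ in the common domain, together with positivity of all these values on $(f_{-},f_{+})$ (cf.\ Fig.~\ref{plot:Fd}), so that $\chi_{S_i(\xi)}(S_j(\eta))$ is simply the indicator of the comparison $S_j(\eta)\le S_i(\xi)$; and (b) the monotonicity from Assumption~\ref{ass:nonlinearity}\eqref{ass:mon}: $S_1,S_3$ are strictly increasing and $S_2$ is strictly decreasing, hence $|S_1'|=S_1'$, $|S_3'|=S_3'$, $|S_2'|=-S_2'$.

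For $\mathcal{R}(\eta,\xi_1)$ one has $\xi_1<\eta$. When $i=1$ all inner terms vanish ($S_1(\eta)>S_1(\xi_1)$ by monotonicity; $S_2(\eta),S_3(\eta)>\alpha_{+}>S_1(\xi_1)$ by (a)). When $i=2$ only $j=1,2$ survive ($S_1(\eta)<\alpha_{+}<S_2(\xi_1)$, and $S_2(\eta)<S_2(\xi_1)$ since $S_2$ decreases, while $S_3(\eta)>\beta_{-}>S_2(\xi_1)$), contributing $-p(S_2(\xi_1))\big(S_1'(\eta)-S_2'(\eta)\big)$. When $i=3$ again only $j=1,2$ survive ($S_1(\eta),S_2(\eta)<\beta_{-}<S_3(\xi_1)$, while $S_3(\eta)>S_3(\xi_1)$), contributing $+p(S_3(\xi_1))\big(S_1'(\eta)-S_2'(\eta)\big)$; summing gives \eqref{eq:formulaforRetaxi1}. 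For $\mathcal{R}(\eta,\xi_2)$ one has $\eta<\xi_2$, so the $S_1$- and $S_3$-comparisons flip: for $i=1$ only $j=1$ survives, for $i=2$ only $j=1$ survives, and for $i=3$ all of $j=1,2,3$ survive. This yields
\[
\mathcal{R}(\eta,\xi_2)=S_1'(\eta)\big(p(S_1(\xi_2))-p(S_2(\xi_2))+p(S_3(\xi_2))\big)+p(S_3(\xi_2))\big(S_3'(\eta)-S_2'(\eta)\big),
\]
and since $\xi_2\in(f_{-},f_{+})$ Corollary~\ref{rem:practical_connection_betkandl} gives $q(\xi_2)=p(S_1(\xi_2))-p(S_2(\xi_2))+p(S_3(\xi_2))$, turning the display into \eqref{eq:formulaforRetaxi2}.

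The only real obstacle is bookkeeping: in each of the two cases one must, for every relevant pair $(i,j)$, simultaneously invoke the interval-membership relations (a) and the monotonicity directions (b) to settle the sharp inequality $S_j(\eta)\le S_i(\xi)$. There is no analytic subtlety beyond keeping track of the sign $|S_2'|=-S_2'$ and of the positivity $S_j(\eta)>0$ that lets $\chi$ collapse to a plain comparison.
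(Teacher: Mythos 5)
Your proposal is correct and follows essentially the same route as the paper: a direct evaluation of the double sum in \eqref{easier_formula_for_S_and_R} by settling each comparison $S_j(\eta)\le S_i(\xi)$ via the branch ordering and the monotonicity of $S_1,S_2,S_3$ (the paper packages these comparisons into a single chain of inequalities read off Fig.~\ref{plot:Fd}), and then invoking Corollary~\ref{rem:practical_connection_betkandl} to recognize $q(\xi_2)$. The only cosmetic difference is your case-by-case $(i,j)$ bookkeeping versus the paper's global ordering, which changes nothing of substance.
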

\begin{proof}
We compute explicitly $\mathcal{R}(\eta, \xi_1)$ and $\mathcal{R}(\eta,\xi_2)$. As it can be seen from formula \eqref{easier_formula_for_S_and_R}, it is important to understand how $S_i(\xi_1)$ and $S_i(\xi_2)$ are related to $S_j(\eta)$. As $f_{-} < \xi_1 < \eta <  \xi_2 < f_{+}$, we deduce from Fig.~\ref{plot:Fd} that
$$
\alpha_{-} < S_1(\xi_1) < S_1(\eta) < S_2(\eta) < S_2(\xi_1) < S_3(\xi_1) < S_3(\eta) < \beta_{+}.
$$
Therefore,
$$
\sum_{j=1}^3 \chi_{S_1(\xi_1)}(S_j(\eta)) \, |S_j'(\eta)| = 0, \quad  \sum_{j=1}^3 \chi_{S_2(\xi_1)}(S_j(\eta)) \, |S_j'(\eta)| = \sum_{j=1}^3 \chi_{S_3(\xi_1)}(S_j(\eta)) \, |S_j'(\eta)| = S_1'(\eta) - S_2'(\eta).
$$
Hence, \eqref{easier_formula_for_S_and_R} implies
\begin{equation*}
\mathcal{R}(\eta, \xi_1) = \sum_{i=2}^3 (-1)^{i+1} \,
p(S_i(\xi_1)) \, \left[S_1'(\eta) - S_2'(\eta)\right] = \big[p(S_3(\xi_1)) - p(S_2(\xi_1))  \big] \, \big[S_1'(\eta) - S_2'(\eta)\big].
\end{equation*}
Similarly, Assumption \ref{ass:nonlinearity}-\eqref{ass:mon}, see also Fig. \ref{plot:Fd}, implies that
$$
\alpha_{-} < S_1(\eta) < S_1(\xi_2) < S_2(\xi_2) < S_2(\eta) < S_3(\eta) < S_3(\xi_2) < \beta_{+}.
$$
Therefore, we find
$$
\sum_{j=1}^3 \chi_{S_1(\xi_2)}(S_j(\eta)) \, |S_j'(\eta)| =  \sum_{j=1}^3 \chi_{S_2(\xi_2)}(S_j(\eta)) \, |S_j'(\eta)| = S_1'(\eta),
$$
$$
\sum_{j=1}^3 \chi_{S_3(\xi_2)}(S_j(\eta)) \, |S_j'(\eta)| = S_1'(\eta) - S_2'(\eta) + S_3'(\eta).
$$
Hence, \eqref{easier_formula_for_S_and_R} and Remark \ref{rem:practical_connection_betkandl} imply
\begin{equation*}
\mathcal{R}(\eta,\xi_2) = q(\xi_2) \, S_1'(\eta) + p(S_3(\xi_2)) \, \left[S_3'(\eta) - S_2'(\eta)\right].
\end{equation*}
\end{proof}
\noindent Finally, we prove the following characterization result based on the nondegeneracy condition \eqref{ass:nondeg} in Assumption \ref{ass:nonlinearity}.

\begin{thm}[Strong convergence of $\veps$ for non-degenerate $F$] \label{thm:analytic_char_result}
Let $F$ be as in Assumption \ref{ass:nonlinearity} (in particular, it satisfies~\eqref{ass:nondeg} in this assumption). Let $p$ and $q$ be given by \eqref{eq:def_of_pq_weak_lim}.
Then, there exists $\alpha(t,x) \geq 0$ such that $q(t,x,\xi) = \chi_{\alpha(t,x)}(\xi)$, cf. equation \eqref{eq_general_kinetic_function}. Consequently $\alpha(t,x)=v(t,x)$ and $\veps(t,x)$ converges strongly in $L^2((0,T)\times\Omega)$.
\end{thm}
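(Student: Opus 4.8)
The plan is to work at a fixed $(t,x)$ (as in the Notation preceding the statement) and prove that the non-increasing function $\xi\mapsto q(\xi)$, valued in $[0,1]$ and supported in $(0,M)$, takes only the values $0$ and $1$ for a.e.\ $\xi$. Granting this, such a function agrees a.e.\ with a single step $\chi_{\alpha(t,x)}$, where $\alpha(t,x):=\int_0^\infty q(\xi)\,\diff\xi$; then Lemma~\ref{lem:YM_con_with_kf} gives $\partial_\xi q=\delta_0-\nu_{t,x}$, so $\nu_{t,x}=\delta_{\alpha(t,x)}$ is a Dirac mass. Since $\{\veps\}_{\varepsilon\in(0,1)}$ is bounded and its Young measure is a.e.\ a Dirac mass, it converges strongly in $L^2((0,T)\times\Omega)$, and identifying the barycenter of $\nu_{t,x}$ with the weak limit yields $\alpha(t,x)=v(t,x)$.

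The crucial algebraic remark is that, for $\eta\in(f_-,f_+)$, the combination of $\mathcal{S}$ and $q$ appearing in \eqref{thmeq:main_identity_loctx} can be written in a form governed by the nondegeneracy condition \eqref{ass:nondeg}, namely
\begin{equation*}
\mathcal{S}(\eta)+q(\eta)=\sum_{i=1}^{3}(-1)^{i+1}\,p(S_i(\eta))\,\big(S_i'(\eta)+1\big),
\end{equation*}
which follows from $|S_1'|=S_1'$, $|S_2'|=-S_2'$, $|S_3'|=S_3'$ together with $q(\eta)=\sum_{i=1}^3(-1)^{i+1}p(S_i(\eta))$ on $(f_-,f_+)$ (Corollary~\ref{rem:practical_connection_betkandl}). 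Inserting the formulas \eqref{eq:formulaforRetaxi1}--\eqref{eq:formulaforRetaxi2} of Lemma~\ref{R_unstable_region} into \eqref{thmeq:main_identity_loctx} and using $\chi_\eta(\xi)=0$ for $0<\eta<\xi$, one obtains, for all $\xi_1<\eta<\xi_2$ in $(f_-,f_+)$,
\begin{equation*}
-\big(1-q(\xi_1)\big)\big[\mathcal{S}(\eta)+q(\eta)\big]=\big[p(S_3(\xi_1))-p(S_2(\xi_1))\big]\big[(S_1'(\eta)+1)-(S_2'(\eta)+1)\big],
\end{equation*}
\begin{equation*}
q(\xi_2)\big[\mathcal{S}(\eta)+q(\eta)-(S_1'(\eta)+1)\big]=p(S_3(\xi_2))\big[(S_3'(\eta)+1)-(S_2'(\eta)+1)\big].
\end{equation*}

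Now argue on $(f_-,f_+)$. Since $S_1'>0>S_2'$, the factor $(S_1'(\eta)+1)-(S_2'(\eta)+1)$ never vanishes, so unless $q\equiv1$ on $(f_-,f_+)$ the first identity (with $\eta$ ranging over $(\xi_1,f_+)$) forces $\big(p(S_3(\xi_1))-p(S_2(\xi_1))\big)/\big(1-q(\xi_1)\big)$ to be a constant $-c_1$ independent of $\xi_1$ and $\mathcal{S}(\eta)+q(\eta)=c_1\big[(S_1'(\eta)+1)-(S_2'(\eta)+1)\big]$ wherever it applies. Substituting this into the second identity produces, for $\eta$ in a subinterval, a relation $\sum_{i=1}^3 b_i(\xi_2)\,(S_i'(\eta)+1)=0$ whose coefficients $b_1=q(\xi_2)(c_1-1)$, $b_2=-q(\xi_2)c_1+p(S_3(\xi_2))$, $b_3=-p(S_3(\xi_2))$ no longer depend on $\eta$; the nondegeneracy condition \eqref{ass:nondeg} then forces $b_1+b_2+b_3=-q(\xi_2)=0$, i.e.\ $q(\xi_2)=0$. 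Since this holds for every $\xi_2$ above the infimum of $\{\xi\in(f_-,f_+):q(\xi)<1\}$, monotonicity of $q$ shows it equals $1$ on an initial subinterval of $(f_-,f_+)$ and $0$ afterwards, hence is $\{0,1\}$-valued there. For $\xi$ in a monotone range ($\xi<f_-$ or $\xi>f_+$) only one branch of $F$ is active and, by the constant-extension convention, $S_j'\equiv0$ for the other two indices, so $\mathcal{S}$ and $\mathcal{R}$ in \eqref{easier_formula_for_S_and_R} collapse to a single term; taking $\xi,\eta$ in the same monotone range with $\eta<\xi$ and $q(\xi)>0$, \eqref{thmeq:main_identity_loctx} reduces, after dividing by $q(\xi)$, to $p(S_1(\eta))(S_1'(\eta)+1)=S_1'(\eta)+1$ (respectively the analogue with $S_3$), hence $p(S_1(\eta))=1$ wherever $q$ is positive, so $q$ is again $\{0,1\}$-valued. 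Patching the three ranges together with the global monotonicity of $q$ yields $q=\chi_{\alpha(t,x)}$ and finishes the proof.

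I expect the main obstacle to be exactly the reduction that makes the nondegeneracy condition applicable: a priori the quantity $\mathcal{S}(\eta)+q(\eta)$ carries the $\eta$-dependent weights $p(S_i(\eta))$, so \eqref{thmeq:main_identity_loctx} is not directly of the form $\sum_i a_i\,(S_i'(\eta)+1)=0$ with constant coefficients, and it is precisely the ``freeze $\eta$, vary $\xi_1$'' step followed by back-substitution into the second identity that removes this dependence. A secondary difficulty is the careful treatment of the degenerate alternatives (for instance $q\equiv1$ or $q\equiv0$ on $(f_-,f_+)$), which must be resolved inside the monotone ranges, together with checking that the constant-extension conventions for $S_2,S_3$ make all indicator functions in \eqref{easier_formula_for_S_and_R} behave as claimed; this is where most of the bookkeeping lies.
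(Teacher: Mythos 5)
Your proposal is correct and follows essentially the same route as the paper: it rests on the explicit identity \eqref{thmeq:main_identity_loctx}, the formulas of Lemma~\ref{R_unstable_region}, and the nondegeneracy condition \eqref{ass:nondeg} to force $\big(1-q(\xi_1)\big)\,q(\xi_2)=0$ on $(f_-,f_+)$, together with the same direct computations yielding $p(S_1(\eta))=1$ (resp. $p(S_3(\eta))=1$) on the monotone ranges. The only deviations — eliminating $\mathcal{S}(\eta)+q(\eta)$ by solving the first identity and substituting (with the case $q\equiv 1$ on $(f_-,f_+)$ treated separately) instead of cross-multiplying the two identities, and patching the three ranges via monotonicity of $q$ rather than the paper's sequential case analysis on $\supp q$ — are organizational, not substantive.
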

\begin{proof}
We know that $p(\xi)$ and $q(\xi)$ are bounded, nonnegative and compactly supported. Moreover, they vanish for $\xi < 0$. Consider the support of $q$ denoted with 
$\supp\, q$.
The proof is divided for three parts where we systematically increase possible support of $q$.\\

\noindent \underline{Case 1: $\supp\, {q}\subset [0,f_{-})$.} Consider $\eta \in (0,f_{-})$ and $\xi$ such that $\xi > \eta$. We want to use \eqref{thmeq:main_identity_loctx}. Notice that $S_1'(\eta) > 0$ and $S_2'(\eta) = S_3'(\eta) = 0$. Using \eqref{easier_formula_for_S_and_R} and Remark \ref{rem:practical_connection_betkandl} we write
$$
 \mathcal{S}(\eta) = p(S_1'(\eta)) \,S_1'(\eta)\, = q(\eta)\,S_1'(\eta).
$$
Moreover, $\chi_{\eta}(\xi) = 0$. When it comes to $\mathcal{R}(\eta,\xi)$ we observe that there is only one $\tau$ such that $F(\tau) = \eta$, namely $\tau = S_1(\eta)$. Moreover, as $\xi > \eta$, $S_1(\eta) < S_i(\xi)$ for $i = 1,2,3$. Therefore, \eqref{easier_formula_for_S_and_R} implies
$$
\mathcal{R}(\eta,\xi) = S_1'(\eta) \, \sum_{i=1}^3 (-1)^{i+1} \,
p(S_i(\xi)) \, \mathds{1}_{J_i}(\xi) = S_1'(\eta) \, q(\xi).
$$
Hence, \eqref{thmeq:main_identity_loctx} simplifies to
$$
q(\xi)\,q(\eta)\,S_1'(\eta) = q(\xi) \, S_1'(\eta) +q(\xi) - q(\xi)\, q(\eta)
$$
which can be rearranged to
$$
\big(q(\xi)\,q(\eta) - q(\xi)\big) \, \big(S_1'(\eta)+1 \big) = 0.
$$
As $S_1'(\eta) > 0$, this implies that for $\eta \in (0,f_{-})$ and $\xi > \eta$ we have
$$
q(\xi) \, (q(\eta) - 1) = 0.
$$
Since $q\leq 1$ is non-increasing, it follows that $q(\xi) =1$ with at most one jump from 1 to 0.
If there is a jump, the result is proved and thus we now continue with the case ${q} = 1$ on $(0,f_-)$.
\\

\noindent \underline{Case 2: $[0,f_{-}) \subset \supp\, {q} \subset [0,f_{+})$.} We consider three points $\xi_1$, $\eta$ and $\xi_2$ such that $f_{-} < \xi_1 < \eta <  \xi_2 < f_{+}$. The proof in this case will be concluded if we demonstrate
\begin{equation}\label{eq:mainclaimforstep2}
 q(\xi_2) \neq 0 \implies q(\xi_1) = 1.
\end{equation} 
Using \eqref{thmeq:main_identity_loctx} with $(\xi_1,\eta)$ and $(\xi_2,\eta)$ we obtain two equations
\begin{equation}\label{prf_thm_eq_case1_lxi12}
\left[q(\xi_1) - 1 \right] \, \mathcal{S}(\eta) =   \mathcal{R}(\eta,\xi_1) + q(\eta) - q(\eta)\,q(\xi_1) = \mathcal{R}(\eta,\xi_1) + q(\eta)\,(1 - q(\xi_1)),
\end{equation}
\begin{equation}\label{prf_thm_eq_case1_lxi122}
 q(\xi_2) \, \mathcal{S}(\eta) =   \mathcal{R}(\eta,\xi_2) + q(\xi_2) - q(\eta)\,q(\xi_2).
\end{equation}
We multiply \eqref{prf_thm_eq_case1_lxi12} with $q(\xi_2)$ and combine it with \eqref{prf_thm_eq_case1_lxi122} to deduce
$$
\big(q(\xi_1) - 1\big)\, \big(\mathcal{R}(\eta,\xi_2) +q(\xi_2) \big) = \mathcal{R}(\eta,\xi_1)\,q(\xi_2).
$$
Now, we use Lemma \ref{R_unstable_region}. Namely, we plug \eqref{eq:formulaforRetaxi1} and \eqref{eq:formulaforRetaxi2} above to discover identity
\begin{multline*}
\big(q(\xi_1) - 1\big)   \big[ q(\xi_2) S_1'(\eta) + p(S_3(\xi_2)) \, \big(S_3'(\eta) - S_2'(\eta)\big)+ q(\xi_2)\big] = \\ = q(\xi_2) \,\big(p(S_3(\xi_1)) - p(S_2(\xi_1))  \big) \, \big(S_1'(\eta) - S_2'(\eta)\big).
\end{multline*}
It can be rewritten as
\begin{multline*}
\big(q(\xi_1) - 1\big) \, \big[q(\xi_2) \, \big(S_1'(\eta) + 1\big) + p(S_3(\xi_2)) \, \big(S_3'(\eta) + 1 - S_2'(\eta) - 1\big)\big] = \\ = q(\xi_2) \,\big(p(S_3(\xi_1)) - p(S_2(\xi_1))  \big) \, \big(S_1'(\eta) + 1 - S_2'(\eta) - 1\big).
\end{multline*}
This can be seen as a linear equation for functions $S_1'(\eta)+1$, $S_2'(\eta)+1$ and $S_3'(\eta)+1$ satisfied for $\eta \in (\xi_1,\xi_2)$. Using \eqref{ass:nondeg} in Assumption~\ref{ass:nonlinearity} we obtain that sum of the coefficients standing next to these functions vanish. Hence,
$$
\big(q(\xi_1) - 1\big) \, q(\xi_2) = 0 \qquad (\xi_1 < \xi_2)
$$
which proves \eqref{eq:mainclaimforstep2}.
\\

\noindent  \underline{Case 3: $[0,f_{+}) \subset \supp\,q$.} This is very similar to the first case. We consider $\eta \in (f_{+}, \infty)$ and arbitrary $\xi \in (0, \eta)$. Note that  $\chi_{\eta}(\xi) = 1$. Moreover, $S_1'(\eta) = S_2'(\eta) = 0$ and so, $$
 \mathcal{S}(\eta) = p(S_3'(\eta)) \,S_3'(\eta)\, = q(\eta)\,S_3'(\eta).
$$
Finally, $ S_i(\xi) < S_3(\eta)$ for $i =1,2,3$. Therefore $\mathcal{R}(\eta,\xi) = 0$ and so, \eqref{thmeq:main_identity_loctx} simplifies to 
$$
q(\xi) \, q(\eta) \, S_3'(\eta) = S_3'(\eta) \, q(\eta) + q(\eta) - q(\xi) \,q(\eta) .
$$
Since $S_3'(\eta) > 0$, we deduce that for all $\eta \in (f_+, \infty)$ and $\xi < \eta$ we have
$$
q(\eta)\left(q(\xi) - 1 \right) = 0.
$$
We conclude as in Case 1 and find that $q(\xi)= \chi_{\alpha}(\xi)$.
\\

\noindent Once we know that, it is easy to derive strong convergence. Since 
$$
v(t,x) \weaksinv \veps(t,x)= \int \chi_{\veps(t,x)}(\xi) \diff \xi  \weaks \int_{\R} q(t,x,\xi) \diff \xi =\int_{R} \chi_{\alpha(t,x)}(\xi) \diff \xi = \alpha(t,x),
$$
we find that $\alpha(t,x)= v(t,x)$. Then, we have 
$$
(\veps(t,x))^2= 2\int \xi \, \chi_{\veps(t,x)}(\xi) \diff \xi  \weaks  2 \int  \xi \, \chi_{v(t,x)}(\xi) \diff \xi = (v(t,x))^2
$$
which implies strong convergence.
\end{proof}

\noindent Now, we may conclude the proof of Theorem \ref{thm_main_id_YM}.
\begin{proof}[Proof of Theorem \ref{thm_main_id_YM}]
Using \eqref{eq:kandlequation} in Lemma~\ref{lem:prop_k_and_l}, we may write
$$
\chi_{v(t,x)}(\xi)= \int p(t,x,\tau)\, \frac{\partial}{\partial \tau} \chi_{F(\tau)}(\xi) \diff \tau
=- \int \frac{\partial p}{\partial \tau}(t,x,\tau) \, \chi_{F(\tau)}(\xi) \diff \tau.
$$
Differentiating with respect to $\xi$ we obtain
$$
\delta_{v(t,x)}(\xi) = - \int \frac{\partial p}{\partial \tau}(t,x,\tau) \,  \delta_{\xi=F(\tau)}(\xi) \diff \tau.
$$
Recalling that $\frac{\partial p}{\partial \tau}$ is nonpositive, for $F(\tau) \neq v(t,x)$ we conclude that $\frac{\partial p}{\partial \tau}(t,x,\tau)=0$. In other words, $p(t,x,\tau)$ can only have non-increasing jumps at the three roots of $F(\tau)=v(t,x)$, i.e., $S_1(v(t,x))$, $S_2(v(t,x))$ and $S_3(v(t,x))$, see Fig.\ref{plot:Fd}. Finally, because $p(\tau)$ decreases from 1 to $0$, the three weights $\{\lambda_i(t,x)\}_{i=1,2,3}$ have to sum-up to $1$ and the representation formula for  $u$ in Theorem \ref{thm_main_id_YM} is proved.
\end{proof}

%

\section{Equation satisfied by weights $\lambda_1(t,x)$, $\lambda_2(t,x)$ and $\lambda_3(t,x)$}\label{sect:diff_ineq_weights}
\noindent In order to prove Theorem \ref{thm:intro_eqns_weights}, we first connect the Young measure representation \eqref{eq:repr_main_result} from Theorem~\ref{thm_main_id_YM} with the kinetic function $p$. Due to Lemma \ref{lem:YM_con_with_kf}, for fixed $(t,x)$, function $p(t,x,\xi)$ has four jumps (see Fig.~\ref{plot:K}):
\begin{itemize}
    \item from 0 to 1 at $\xi = 0$,
    \item from 1 to $\kappa_1(t,x)$ at $\xi = S_1(v(t,x))$ for some $\kappa_1(t,x) \in (0,1)$,
    \item from $\kappa_1(t,x)$ to $\kappa_2(t,x)$ at $\xi = S_2(v(t,x))$ for some $\kappa_2(t,x) \in (0,\kappa_1(t,x))$,
    \item from $\kappa_2(t,x)$ to 0 at $\xi = S_3(v(t,x))$.
\end{itemize}
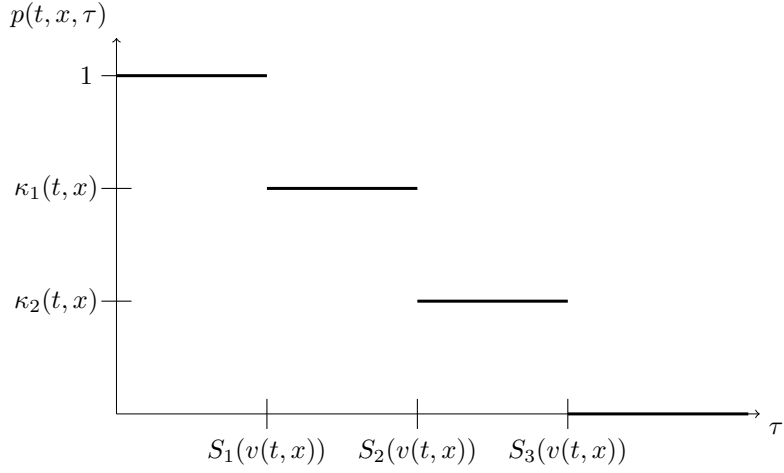
\begin{figure}
\begin{tikzpicture}

\draw[line width=0.1mm,->] (0,0) -- (8.55,0) node[anchor=north west] {$\tau$};
\draw[line width=0.1mm,->] (0,0) -- (0,5) node[anchor=south east] {$p(t,x,\tau)$};

\draw [line width=0.4mm](0,4.5) -- (2,4.5);
\draw [line width=0.4mm](2,3) -- (4,3);
\draw [line width=0.4mm](4,1.5) -- (6,1.5);
\draw [line width=0.4mm](6,0) -- (8.4,0);
\node at (-0.4, 4.5) {$1$};
\node at (-0.8, 3) {$\kappa_1(t,x)$};
\node at (-0.8, 1.5) {$\kappa_2(t,x)$};
\draw (-0.2,4.5) -- (0.2,4.5);
\draw (-0.2,3) -- (0.2,3);
\draw (-0.2,1.5) -- (0.2,1.5);

\node at (2,-0.5) {$S_1(v(t,x))$};
\node at (4,-0.5) {$S_2(v(t,x))$};
\node at (6,-0.5) {$S_3(v(t,x))$};
\draw (2,-0.2) -- (2,0.2);
\draw (4,-0.2) -- (4,0.2);
\draw (6,-0.2) -- (6,0.2);

\end{tikzpicture}
\vspace{-6mm}
\caption{Plot of the function $\tau \mapsto p(t,x,\tau)$ for fixed $(t,x)$.}
\label{plot:K}
\end{figure}
\noindent Once again from Lemma \ref{lem:YM_con_with_kf} we deduce that
$$
\lambda_1(t,x) = 1-\kappa_1(t,x), \qquad \lambda_2(t,x) = \kappa_1(t,x) - \kappa_2(t,x), \qquad \lambda_3(t,x) = \kappa_2(t,x).
$$
Hence, to understand dynamics of weights $\lambda_1(t,x)$, $\lambda_2(t,x)$ and $\lambda_3(t,x)$, it is sufficient to study coefficients $\kappa_1(t,x)$ and $\kappa_2(t,x)$. Moreover, we have representation
\begin{equation}\label{eq:formula_for_p}
p(t,x,\tau) = \mathds{1}_{[0,S_1(v(t,x))]}(\tau) + \kappa_1(t,x) \, \mathds{1}_{[S_1(v(t,x)),S_2(v(t,x))]}(\tau) + \kappa_2(t,x) \, \mathds{1}_{[S_2(v(t,x)),S_3(v(t,x))]}(\tau).
\end{equation} 
\begin{proof}[Proof of \eqref{thm:part1_eq_weights} in Theorem \ref{thm:intro_eqns_weights}]
Due to assumptions, $\partial_t v(t,x)$ exists in the Sobolev sence. Therefore, we can write \begin{equation}\label{eq:formula_for_dpdt}
\begin{split}
\partial_t p(t,x,\tau) =&- \delta_{\tau = S_1(v(t,x))}(\tau) \, S_1'(v(t,x)) \, \partial_t v(t,x) \phantom{\Big(\Big)}\\
&-\Big(\delta_{\tau = S_2(v(t,x))}(\tau) \, S_2'(v(t,x)) - \delta_{\tau = S_1(v(t,x))}(\tau) \, S_1'(v(t,x)) \Big)\,\kappa_1(t,x)\, \partial_t v(t,x)\\
&-\Big(\delta_{\tau = S_3(v(t,x))}(\tau) \, S_3'(v(t,x)) - \delta_{\tau = S_2(v(t,x))}(\tau) \, S_2'(v(t,x)) \Big)\,\kappa_2(t,x)\, \partial_t v(t,x) \\
&+\partial_t\kappa_1(t,x) \, \mathds{1}_{[S_1(v(t,x)),S_2(v(t,x))]}(\tau) + \partial_t\kappa_2(t,x) \, \mathds{1}_{[S_2(v(t,x)),S_3(v(t,x))]}(\tau) \phantom{\Big(\Big)}
\end{split}
\end{equation}
in the sense of distributions. Moreover, functions $p(t,x,\xi)$ and $q(t,x,\xi)$ satisfy PDE \eqref{eq:PDE2_sat_by_kin_LIMIT}. We note that for all $i = 1,2,3$ it holds
$$
\int_{\R} \delta_{\tau = S_1(v(t,x))}(\tau) \, \delta_{\xi = F(\tau)}(\xi) \diff \tau = \delta_{\xi = v(t,x)}(\xi)
$$
because $F(S_i(v(t,x))) = v(t,x)$. Therefore, plugging \eqref{eq:formula_for_dpdt} into \eqref{eq:PDE2_sat_by_kin_LIMIT}, we deduce
\begin{equation}\label{eq:ODE_combined_noloc}
\begin{split}
\left[\int_{S_1(v(t,x))}^{S_2(v(t,x))} \delta_{\xi=F(\tau)}(\xi) \diff \tau\right]& \, \partial_t \kappa_1(t,x) + \left[\int_{S_2(v(t,x))}^{S_3(v(t,x))} \delta_{\xi=F(\tau)}(\xi) \diff \tau\right] \, \partial_t \kappa_2(t,x)
=\partial_{\xi} n(t,x,\xi)\\& + (\Delta - \partial_t)\, q(t,x,\xi)  + \delta_{\xi = v(t,x)}(\xi) \, \partial_t v(t,x)\, S_1'(v(t,x)) \phantom{\int_{S_2(v(t,x))}^{S_3(v(t,x))}}\\ & + \delta_{\xi = v(t,x)}(\xi)\,\sum_{i=1}^2  (S_{i+1}'(v(t,x))-S_i'(v(t,x))) \, \kappa_i(t,x). \phantom{\int_{S_2(v(t,x))}^{S_3(v(t,x))}}
\end{split}
\end{equation}
Since sequences $\{\partial_t \veps\}_{\varepsilon \in (0,1)}$ and $\{\Delta \veps\}_{\varepsilon \in (0,1)}$ are uniformly bounded in $L^2((0,T)\times\Omega)$, we deduce from equation \eqref{system_2} that \begin{equation}\label{eq:convergence_v_to_F_faster}
\| \veps - F(\ueps)\|_{L^2((0,T)\times\Omega)} \leq C\, \varepsilon
\end{equation}
for some constant $C$. Therefore, term $n_2^{\varepsilon}(t,x,\xi)$ defined with \eqref{eq:term_n2_measure} converges to 0 as $\varepsilon \to 0$ and measure $n(t,x,\xi)$ equals
\begin{equation}\label{eq:form_of_n_special_case}
n(t,x,\xi) = \delta_{v(t,x)}(\xi)\, |\nabla v(t,x)|^2.
\end{equation}
We claim now that \eqref{eq:formula_for_dpdt} implies 
\begin{equation}\label{eq:weights_smooth_claim_kappa}
\partial_t \kappa_1(t,x) =0, \qquad \qquad \partial_t \kappa_2(t,x) = 0
\end{equation}
which is equivalent to the assertion because $\lambda_1 + \lambda_2 + \lambda_3 = 1$. To see \eqref{eq:weights_smooth_claim_kappa}, fix $(t_0,x_0)$ such that $v(t_0,x_0) \in (f_{-}, f_{+})$ and open neighbourhood $\mathcal{O}_{t_0,x_0}$ such that $v(t,x) < \xi_0 < f_{+}$ in $\mathcal{O}_{t_0,x_0}$ for some $\xi_0$. Then, \eqref{eq:ODE_combined_noloc} for $(t,x,\xi) \in \mathcal{O}_{t_0,x_0} \times (\xi_0, f_{+})$ boils down to
$$
\left[\int_{S_1(v(t,x))}^{S_2(v(t,x))} \delta_{\xi=F(\tau)}(\xi) \diff \tau\right] \, \partial_t \kappa_1(t,x) + \left[\int_{S_2(v(t,x))}^{S_3(v(t,x))} \delta_{\xi=F(\tau)}(\xi) \diff \tau\right] \, \partial_t \kappa_2(t,x) = 0
$$
because $q(t,x,\xi) = \chi_{v(t,x)}(\xi)$. However, as $v(t,x)<\xi$, the second term vanishes because 
$$
S_2(\xi) < S_2(v(t,x)) < S_3(v(t,x)) < S_3(\xi),
$$
cf. Fig.~\ref{plot:Fd}. Hence, we deduce $\partial_t \kappa_1(t,x) = 0$ in $\mathcal{O}_{t_0,x_0}$. Equality $\partial_t \kappa_3(t,x) = 0$ follows from the similar reasoning - this time we need to localize equation so that $f_{-}<\xi < v(t,x)$.
\end{proof}
\begin{rem}
One can prove \eqref{thm:part1_eq_weights} in Theorem \ref{thm:intro_eqns_weights} under weaker assumption on $\{\Delta \veps\}_{\varepsilon \in (0,1)}$, namely that the sequence $\{\varepsilon^{1/2 - \delta}\,\Delta \veps\}_{\varepsilon \in (0,1)}$ is bounded in $L^2((0,T)\times\Omega)$ for some $\delta > 0$. Indeed, in this case we obtain
$$
\| \veps - F(\ueps)\|_{L^2((0,T)\times\Omega)} \leq C\, \varepsilon^{1/2+\delta}
$$
instead of \eqref{eq:convergence_v_to_F_faster} and the same conclusion as in \eqref{eq:form_of_n_special_case} follows concerning form of the measure $n(t,x,\xi)$. The assumption on $\{\partial_t \veps \}_{\varepsilon \in (0,1)}$ is still necessary to guarantee existence of $\partial_t v$.
\end{rem}

\noindent We remark that identity \eqref{eq:ODE_combined_noloc} was obtained in \cite[Appendix A]{MR2103092}. We also note that derivation of~\eqref{eq:ODE_combined_noloc} requires some smoothness of $v(t,x)$ and so, our PDE for kinetic functions~\eqref{eq:PDE2_sat_by_kin_LIMIT} may be seen as a weak formulation of~\eqref{eq:ODE_combined_noloc}.

\begin{proof}[Proof of \eqref{thm:part2_eq_weights} in Theorem \ref{thm:intro_eqns_weights}.]
This time we need to be more careful as $\partial_t v$ can be understood only in the sense of distributions and computation \eqref{eq:formula_for_dpdt} is no longer valid. Still we can write
\begin{equation}\label{eq:formula_for_dpdt_special_case}
\begin{split}
\partial_t p(t,x,\tau) = \,& \partial_t \mathds{1}_{[0,S_1(v(t,x))]}(\tau)+ \partial_t\big(\kappa_1(t,x) \, \mathds{1}_{[S_1(v(t,x)),S_2(v(t,x))]}(\tau)\big) \\ &+\partial_t\big( \kappa_2(t,x) \, \mathds{1}_{[S_2(v(t,x)),S_3(v(t,x))]}(\tau)\big).
\end{split}
\end{equation}
Therefore, plugging \eqref{eq:formula_for_dpdt_special_case} into \eqref{eq:PDE2_sat_by_kin_LIMIT}, we deduce
\begin{equation}\label{eq:PDE_with_p_plugged_gen_case}
\begin{split}
&\partial_t \int_{0}^{S_1(v(t,x))}  \delta_{\xi = F(\lambda)}(\xi) \diff \lambda + \partial_t \int_{S_1(v(t,x))}^{S_2(v(t,x))} \kappa_1(t,x) \, \delta_{\xi = F(\lambda)}(\xi) \diff \lambda \,+ \\ & \qquad  + \partial_t \int_{S_2(v(t,x))}^{S_3(v(t,x))} \kappa_2(t,x) \, \delta_{\xi = F(\lambda)}(\xi) \diff \lambda + \partial_t q(t,x,\xi) - \Delta_x q(t,x,\xi) =  \partial_{\xi}\, {n}(t,x,\xi).
\end{split}
\end{equation}
As in the proof of \eqref{thm:part1_eq_weights} above, we localize around 
$(t,x,\xi)$ such that $v(t,x)<\xi$. In this case,
$$
S_1(v(t,x)) < S_1(\xi) < S_2(\xi) < S_2(v(t,x)) < S_3(v(t,x)) < S_3(\xi)
$$
so that $\int_{0}^{S_1(v(t,x))}  \delta_{\xi = F(\lambda)}(\xi) \diff \lambda = 0$ and $\int_{S_2(v(t,x))}^{S_3(v(t,x))} \kappa_2(t,x) \, \delta_{\xi = F(\lambda)}(\xi) \diff \lambda = 0$. Therefore,
$$
\partial_t \int_{S_1(v(t,x))}^{S_2(v(t,x))} \kappa_1(t,x) \, \delta_{\xi = F(\lambda)}(\xi) \diff \lambda  =  \partial_{\xi}\, {n}(t,x,\xi)
$$
holds in any open set where $f_{-} < v(t,x) < \xi_0 < \xi < f_{+}$ for some $\xi_0$. This proves \eqref{eq:intro_thm_PDE_weigts_1}. In a similar way we localize around $\xi < v(t,x)$ and deduce \eqref{eq:intro_thm_PDE_weigts_2}.  
\end{proof}
\noindent Understanding dynamics of weights and deducing more information from~\eqref{eq:intro_thm_PDE_weigts_1} and \eqref{eq:intro_thm_PDE_weigts_2} is one of the three open problems discussed in Section~\ref{sect:open_problems}.
\section{Concluding remarks}

\subsection{Connection with Plotnikov's approach}
\label{sect:connection_with_Plotnikov}

\noindent Our work uses ideas from a seminal paper by Plotnikov \cite{MR1299852} who studied another regularization 
\begin{equation}\label{eq:sect7_reg_for_back}
\partial_t \weps = \Delta A(\weps) + \varepsilon \, \Delta (\partial_t \weps)
\end{equation}
of the forward-backward problems $\partial_t w = \Delta A(w)$ where $A$ has a similar monotonicity profile as  presented  in Fig.~\ref{plot:Fd} for function $F$. Below we summarize his argument adapted to our system \eqref{system_1}--\eqref{system_2} and emphasize the differences between his and our approach. We remark that Plotnikov worked with Young measures and obtained identities for measures of arbitrary sets rather than functional identities as \eqref{thmeq:main_identity}. Nevertheless, we believe that the equivalent approach of kinetic formulation allows to simplify the reasoning and bring new information.\\

\noindent Here, following \cite{MR1299852}, we assume additionally that $F'(u) > -1$ and we define functions
\begin{equation}\label{eq:def_of_I_and_A}
I(u):= u + F(u), \qquad A(w) = F(I^{-1}(w)).
\end{equation}
Note that $I$ is bijective so function $A$ has the same monotonicity profile as function $F$. If we let $\weps = \ueps + \veps$, we deduce from \eqref{system_1}--\eqref{system_2} that
\begin{equation}\label{eq:sum_of_u_and_v}
\partial_t \weps = \Delta \veps
\end{equation}
and the connection between \eqref{eq:sect7_reg_for_back} and \eqref{eq:sum_of_u_and_v} comes from an observation that $\veps - A(\weps) \to 0$ in $L^2((0,T)\times\Omega)$. Indeed, it is sufficient to write
\begin{equation}\label{eq:strong_conv_v_AW}
\veps - A(\weps) = \veps - A(\ueps + F(\ueps)) + A(\ueps + F(\ueps)) - A(\weps)
\end{equation}
and use the strong convergence of $\veps - F(\ueps) \to 0$ from Corollary \ref{cor:converg_A_v}.\\

\noindent Plotnikov works in variables $(\weps, A(\weps))$ rather than with $(\ueps, \veps)$ as in this paper. Let
$$
k^{\varepsilon}(t,x,\xi) = \chi_{\weps(t,x)}(\xi), \qquad \qquad l^{\varepsilon}(t,x,\xi) = \chi_{A(w^{\varepsilon})(t,x)}(\xi).
$$
To obtain a PDE satisfied by the weak$^*$ limits of the kinetic functions $k^{\varepsilon}$ and $l^{\varepsilon}$ as in Theorem \ref{thm:2ndPDE_kinfunct}, Plotnikov introduces functions
\begin{equation}\label{cc_notation_compact_functions}
G(\lambda) = \int_{0}^{\lambda} g(A(\tau)) \diff \tau, \qquad \qquad  H(\lambda) = \int_0^{A(\lambda)} h(\tau) \diff \tau.
\end{equation}
where $g$ and $h$ are smooth test functions. Using chain rule and \eqref{eq:sum_of_u_and_v}, we obtain
\begin{align*}
\partial_t  G(\weps) &= g(A(\weps)) \, \Delta \veps = g(\veps)\, \Delta \veps + \big(g(A(\weps)) - g(\veps)\big) \, \Delta \veps \\ &= 
\Delta_x \widetilde{G}(\veps) -\,g'(\veps) \, |\nabla \veps|^2  + \big(g(A(\weps)) - g(\veps)\big) \, \Delta \veps.
\end{align*}
where $\widetilde{G}$ is a primitive function of $g$. This leads to PDE
\begin{equation}\label{eq:PDE_kin_func_previous}
{\partial_t}\,  \int_{\R}  k(t,x,\tau) \, \delta_{\xi = A(\tau)}(\xi) \diff \tau - \Delta_x\, l(t,x,\xi) = {\partial_\xi}\, m(t,x,\xi).
\end{equation}
where $k$ and $l$ are weak$^*$ limits of functions $k^{\varepsilon}$ and $l^{\varepsilon}$ while $m$ is a weak$^*$ limit of the sequence
$$
m^{\varepsilon}(t,x,\xi) = \delta_{\xi = \veps(t,x)}(\xi)\, |\nabla \veps(t,x)|^2 +  \int_0^1 \delta_{\xi = s\,A(\weps) + (1-s)\, \veps}(\xi) \diff s \, \left(\veps - A(\weps) \right) \, \Delta \veps.
$$
It is a little bit surprising that it is not clear what is the sign of measure $m$ while the measure $n$ from PDE \eqref{eq:PDE2_sat_by_kin_LIMIT} is nonnegative. It is even more mysterious if we realize that left-hand sides of limiting equations \eqref{eq:PDE2_sat_by_kin_LIMIT} and \eqref{eq:PDE_kin_func_previous} are exactly the same. This is the content of the following lemma.
\begin{lem}\label{lem:two_PDEs_coincide}
Let $k$, $l$, $p$ and $q$ be the weak$^*$ limits of kinetic functions $k^{\varepsilon}(t,x,\xi) = \chi_{\weps(t,x)}(\xi)$, $l^{\varepsilon}(t,x,\xi) = \chi_{A(\weps)(t,x)}(\xi)$, $p^{\varepsilon}(t,x,\xi) = \chi_{\ueps(t,x)}(\xi)$ and $q^{\varepsilon}(t,x,\xi) = \chi_{\veps(t,x)}(\xi)$. Then,
\begin{enumerate}
\item\label{C1} $l(t,x,\xi) = q(t,x,\xi)$,
\item\label{C112} $k(t,x,\xi) = p(t,x,I^{-1}(\xi))$,
\item\label{C2} $\int_{\R}  k(t,x,\tau) \, \delta_{\xi = A(\tau)}(\xi) \diff \tau = \int_{\R} p(t,x,\lambda) \, \delta_{\xi = F(\lambda)}(\xi) \diff \lambda +  q(t,x,\xi)$.
\end{enumerate}
In particular, \eqref{C1} and \eqref{C2} imply that left-hand sides of PDEs \eqref{eq:PDE2_sat_by_kin_LIMIT} and \eqref{eq:PDE_kin_func_previous} coincide.
\end{lem}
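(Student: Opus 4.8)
The plan is to establish the three identities \eqref{C1}, \eqref{C112}, \eqref{C2} in turn -- each one being a change of variables combined with a strong convergence already at hand -- and then to read off the last assertion by direct comparison of \eqref{eq:PDE2_sat_by_kin_LIMIT} and \eqref{eq:PDE_kin_func_previous}. For \eqref{C1}, I would recall from \eqref{eq:strong_conv_v_AW} (a consequence of Corollary~\ref{cor:converg_A_v} and continuity of $A$) that $\veps - A(\weps) \to 0$ strongly in $L^2((0,T)\times\Omega)$. Thus $\{\veps\}_{\varepsilon\in(0,1)}$ and $\{A(\weps)\}_{\varepsilon\in(0,1)}$ are uniformly bounded sequences whose difference tends to zero in $L^2$, so they generate the same Young measure (as in the proof of Lemma~\ref{lem:another_char_of_nu}), and hence, via Lemma~\ref{lem:YM_con_with_kf}, the corresponding kinetic functions share the same weak$^*$ limit. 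This is exactly $l(t,x,\xi) = q(t,x,\xi)$.

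For \eqref{C112}, the key elementary observation is the pointwise identity $\chi_{I(w)}(\xi) = \chi_{w}(I^{-1}(\xi))$, valid because $I$ is strictly increasing with $I(0)=0$; here the standing hypothesis $F'>-1$ of this subsection, together with $F\geq 0$, $F(0)=0$ and $F(u)\to\infty$, guarantees that $I:\R\to\R$ is a genuine bijection, so $I^{-1}$ is a well-defined smooth diffeomorphism. Hence $\chi_{I(\ueps)(t,x)}(\xi) = p^{\varepsilon}(t,x,I^{-1}(\xi))$, and changing variables in the $\xi$-integral against a test function shows $p^{\varepsilon}(t,x,I^{-1}(\xi)) \weaks p(t,x,I^{-1}(\xi))$. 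On the other hand $\weps - I(\ueps) = \veps - F(\ueps) \to 0$ strongly in $L^2$ by Corollary~\ref{cor:converg_A_v}, so exactly as in the previous step $\chi_{\weps}$ and $\chi_{I(\ueps)}$ have the same weak$^*$ limit; therefore $k(t,x,\xi) = p(t,x,I^{-1}(\xi))$.

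For \eqref{C2}, I would start from \eqref{C112}, test against a smooth $\phi(\xi)$, and substitute $\lambda = I^{-1}(\tau)$, i.e.\ $\tau = I(\lambda)$, $\diff\tau = I'(\lambda)\diff\lambda = (1+F'(\lambda))\diff\lambda$, using $A(I(\lambda)) = F(\lambda)$. This rewrites $\int_{\R} k(t,x,\tau)\,\delta_{\xi=A(\tau)}(\xi)\diff\tau$ as $\int_{\R} p(t,x,\lambda)\,\delta_{\xi=F(\lambda)}(\xi)\diff\lambda + \int_{\R} p(t,x,\lambda)\,F'(\lambda)\,\delta_{\xi=F(\lambda)}(\xi)\diff\lambda$, and the second integral equals $q(t,x,\xi)$ by \eqref{eq:kandlequation} in Lemma~\ref{lem:prop_k_and_l}. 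This gives \eqref{C2}. Comparing now \eqref{eq:PDE2_sat_by_kin_LIMIT} with \eqref{eq:PDE_kin_func_previous}: identity \eqref{C2} matches the two time-derivative terms and identity \eqref{C1} matches the two Laplacian terms, so the left-hand sides of the two PDEs coincide.

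There is no deep obstacle here; the content is essentially bookkeeping, and the ``surprise'' flagged in the text (that $m$ need not have a sign while $n$ is nonnegative) is precisely what this computation makes transparent. The only two points that deserve care are: (i) the passage ``strong $L^2$ convergence of the difference $\Rightarrow$ equal weak$^*$ limits of the kinetic functions'', which one justifies through the Young-measure coincidence lemma together with Lemma~\ref{lem:YM_con_with_kf} rather than by a direct estimate on $\chi_{\veps}-\chi_{A(\weps)}$; and (ii) carrying out the substitution in \eqref{C2} at the level of distributions -- which is why one checks first that $I$ is a bijection of $\R$ so that $I^{-1}$ and the Jacobian $I' = 1+F'$ are globally defined.
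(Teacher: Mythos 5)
Your proposal is correct, and for parts \eqref{C1} and \eqref{C112} it follows essentially the paper's own route: \eqref{C1} is exactly the strong-convergence argument via \eqref{eq:strong_conv_v_AW}, and for \eqref{C112} you use the pointwise identity $\chi_{I(\ueps)}(\xi)=p^{\varepsilon}(t,x,I^{-1}(\xi))$ at the $\varepsilon$-level plus $\weps-I(\ueps)=\veps-F(\ueps)\to 0$, whereas the paper first passes to the limit (obtaining the analogue of \eqref{eq:kandlequation} with $I$ in place of $F$) and then inverts the resulting distributional identity using bijectivity of $I$; these are the same idea in a slightly different order, and your version is if anything a bit more elementary. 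The genuine difference is in \eqref{C2}: the paper works branchwise, introducing the three inverses $R_i=I(S_i)$ of $A$, computing $R_i'=S_i'+\mathds{1}_{J_i}$, and invoking the adjoint-distribution machinery of Lemma~\ref{lem:int_by_subst_part1} and Corollary~\ref{rem:practical_connection_betkandl} to split $\sum_i k(R_i)\,|R_i'|$ into the $p$-term and $l=q$; you instead perform a single global substitution $\tau=I(\lambda)$, using $A\circ I=F$ and $I'=1+F'$, and identify the extra term $\int_{\R}p\,F'(\lambda)\,\delta_{\xi=F(\lambda)}\diff\lambda$ as $q$ via \eqref{eq:kandlequation}. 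Both are valid; your route avoids the branchwise bookkeeping (and any case distinction on the sign of $R_i'$), at the price of relying on $I$ being a globally defined $C^1$ increasing bijection with $I(0)=0$ — which is precisely the standing assumption $F'>-1$ of this subsection, as you correctly note — while the paper's branch decomposition keeps the computation in the same $S_i$-language used in the rest of Section~\ref{sect:prf_main_result}. Your points of care (Young-measure coincidence for the strong-convergence step, and carrying out the substitution against test functions) are exactly the right ones.
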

\begin{proof}
Property \eqref{C1} follows from strong convergence of $\veps - A(\weps) \to 0$ cf. \eqref{eq:strong_conv_v_AW}. To see \eqref{C112}, we first observe that $I(\ueps) - \weps \to 0$ strongly. Hence,
$$
k(t,x,\xi) = \int_{\R} q(t,x,\lambda) \, I'(\lambda) \, \delta_{\xi = I(\lambda)}(\xi) \diff \lambda
$$
in the sense of distributions. Note that $I$ is assumed to be invertible so we may transform this distributional identity into the pointwise one. For any test function $\psi(\xi)$ we have
\begin{multline*}
\int_{\R} \psi(\xi) \, k(t,x,\xi) \diff \xi = \int_{\R} q(t,x,\lambda) \, I'(\lambda) \, \psi(I(\lambda)) \diff \lambda = \\ = \int_{\R} q(t,x,I^{-1}(\xi)) \, I'(I^{-1}(\xi)) \, \psi(\xi) \, (I^{-1})'(\xi) \diff \xi = 
 \int_{\R} \psi(\xi) \, q(t,x,I^{-1}(\xi)) \diff \xi.
\end{multline*}
To prove \eqref{C2}, we note that function $A$ has three inverses
$
R_i(\xi):= I(S_i(\xi))$ where $1 \leq i \leq 3
$. By chain rule,
$$
R_i'(\xi) = I'(S_i(\xi)) \, S_i'(\xi) = (1 + F'(S_i(\xi)) ) \,S_i'(\xi) = S_i'(\xi) + \mathds{1}_{J_i}(\xi)
$$
because $S_i$ are inverses of $F$ on $J_i$ and so, $F'(S_i(\xi)) \,S_i'(\xi) = \mathds{1}_{J_i}(\xi)$. Moreover, by virtue of Lemma \ref{lem:int_by_subst_part1} and Corollary \ref{rem:practical_connection_betkandl}, we have  
$$
l(t,x,\xi) = \sum_{i=1}^3 (-1)^{i+1}\,k(t,x,R_i(\xi)) \,\mathds{1}_{J_i}(\xi), \quad \int_{\R} k(t,x,\tau) \, \delta_{\xi=A(\tau)}(\xi) \diff \tau = \sum_{i=1}^3 k(t,x,R_i(\xi)) \,|R_i'(\xi)|.
$$
Using these facts, we write
\begin{align*}
\int_{\R} k(t,x,\tau) \, \delta_{\xi=A(\tau)}(\xi) \diff \tau &= \sum_{i=1}^{3} (-1)^{i+1}\, k(t,x,R_i(\xi)) \,  R_i'(\xi) =\\
& = \sum_{i=1}^{3} (-1)^{i+1}\, k(t,x,R_i(\xi)) \,  S_i'(\xi) + \sum_{i=1}^{3} (-1)^{i+1}\, k(t,x,R_i(\xi)) \mathds{1}_{J_i}(\xi) =\\
& = \sum_{i=1}^3 (-1)^{i+1}\, p(t,x,I^{-1}(R_i(\xi))) \, S_i'(\xi) + l(t,x,\xi) \\
& = \sum_{i=1}^3 (-1)^{i+1}\, p(t,x,S_i(\xi))) \, S_i'(\xi) + q(t,x,\xi) \\
& = \int_{\R} p(t,x,\tau) \, \delta_{\xi = F(\tau)}(\xi) \diff \tau + q(t,x,\xi).
\end{align*}
\end{proof}
\noindent Lemma \ref{lem:two_PDEs_coincide} implies that the next steps in identifiction of the limit in our paper and in the work of Plotnikov are equivalent. However, working directly with $(\ueps, \veps)$ allows to formulate equation for kinetic function with nonnegative measure. This is useful to gain more information on weights $\lambda_1(t,x)$, $\lambda_2(t,x)$ and $\lambda_3(t,x)$ from equations \eqref{eq:intro_thm_PDE_weigts_1} and \eqref{eq:intro_thm_PDE_weigts_2}. Another advantage of our approach is that it does not require assumption $F'(u) > -1$.

\subsection{Open problems and further perspectives}\label{sect:open_problems}
\noindent We list here three problems connected to our work. For now, their treatment seems to be unavailable for us.\\

\noindent \underline{Problem 1: fast reaction limit for general reaction-diffusion system.} System \eqref{system_1}--\eqref{system_2} studied in this paper is a special case of
\begin{align}
\partial_t \ueps &= d_1\, \Delta \ueps + \frac{\veps - F(\ueps)}{\varepsilon}, \label{system_1_gen}\\
\partial_t \veps &= d_2\, \Delta \veps + \frac{F(\ueps) - \veps}{\varepsilon} \label{system_2_gen}
\end{align}
for some $d_1, d_2 \geq 0$. Using refined energy estimates from \cite{morita2016reaction}, fast reaction limit was established in \cite[Theorem 2.9]{MR3908864} for two special cases
\begin{equation}\label{eq:cond_d1_d2}
d_2 \geq d_1,\, F'(u) + \frac{d_1}{d_2} > 0, \qquad \mbox{ or } \qquad d_1 > d_2,\, F'(u) + \frac{d_2}{d_1} > 0.
\end{equation}
More precisely, it was proved that $\weps := \ueps + \veps$ converges strongly to the solution of 
\begin{equation}\label{eq:limit_problem_some_special_case}
\partial_t w - \Delta A(w) = 0, \qquad \frac{\partial}{\partial \textbf{n}} w = 0
\end{equation}
where
$$
A(w) = d_1 u + d_2 F(u)\quad  \mbox{ with }\quad  w = u + F(u).
$$
Function $A$ is well-defined because conditions \eqref{eq:cond_d1_d2} imply that $F'(u)>-1$. Limiting equation \eqref{eq:limit_problem_some_special_case} is a consequence of summing up \eqref{system_1_gen}--\eqref{system_2_gen} together with a priori estimates that gives strong convergence of $\ueps + \veps \to u+v$ and $\veps - F(\ueps) \to 0$. However, if one only assumes $F'(u) > -1$ without \eqref{eq:cond_d1_d2}, the only available energy estimate is 
\begin{equation*}
\begin{split}
&\frac{\diff}{\diff t} \int_{\Omega} \left[ \widetilde{F}(\ueps) + \frac{1}{2}\left(\veps\right)^2 + \varepsilon \, d_1 |\nabla \ueps|^2 + \frac{d_1^2 + d_1\,d_2}{2\,(d_2^2 - d_1^2)} \left(w^{\varepsilon}\right)^2 \right]\diff x = \\
& \qquad \qquad = - \varepsilon \int_{\Omega} \left(d_1 \, \Delta \ueps + \frac{\veps - F(\ueps)}{\varepsilon} \right)^2 \diff x - \frac{1}{d_2 - d_1} \int_{\Omega} \left|d_1 \nabla \ueps + d_2 \nabla \veps \right|^2 \diff x
\end{split}
\end{equation*}
where $\widetilde{F}$ is a primitive function of $F$. This equality is too weak to deduce any strong convergence. The only result we can prove in that case is that, seting $\weps = \ueps + \veps$ and $\zeps = d_1\ueps + d_2\veps$, we have
$$
\weps \weaks w := u+v, \qquad \zeps \weaks z := d_1u + d_2v, \qquad \veps - F(\ueps) \weaks 0, \qquad w_t = \Delta z
$$
but it is not clear at all what is the coupling between functions $w$ and $z$.\\

\noindent \underline{Problem 2: nondegeneracy condition.} Strong convergence $\veps \to v$ in our work is rather unavailable to be obtained from a priori estimates. It is a consequence of careful analysis of kinetic function (or Young measure) in Theorem \ref{thm:analytic_char_result} and nondegeneracy condition \eqref{ass:nondeg} in Assumption \ref{ass:nonlinearity}. This technical assumption excludes piecewise affine functions $F$ and is hard to verify for particular examples as one needs to know inverses of $F$ explicitly. On the other hand, this type of condition is a common assumption in papers concerning mostly regularization of forward-backward parabolic problems \cite{MR1015926, MR1299852} but also some hyperbolic equations with nonmonotone model functions \cite{MR657784}. We would like to know whether nondegeneracy assumption can be waived and if not, what happens with solutions to \eqref{system_1}--\eqref{system_2} in the case of piecewise affine function $F$.\\

\noindent \underline{Problem 3: understanding equation on weights $\lambda_1$, $\lambda_2$ and $\lambda_3$.} In Section \ref{sect:diff_ineq_weights} we proved equations \eqref{eq:intro_thm_PDE_weigts_1} and \eqref{eq:intro_thm_PDE_weigts_2} that carry some information on the weights in the decomposition \eqref{eq:repr_main_result}. It is not clear what is the information hidden in this equality. For instance, is it possible to determine asymptotic values of $\{\lambda_i(t,x)\}_{i=1,2,3}$? Some information can be gained from the sign of $n$. For example, for $\psi(\xi) \geq 0$, we can test \eqref{eq:intro_thm_PDE_weigts_1} with $\Psi(\xi) = \int_0^{\xi} \psi(\eta) \diff \eta$ to deduce
$$
\partial_t \left[ \kappa_1(t,x) \, \int_{S_1(v(t,x))}^{S_2(v(t,x))} \Psi(F(\lambda)) \diff \lambda \right] = - \int_{\R} \psi(\xi) \diff m(t,x,\xi) \leq 0
$$
so that the function $t \mapsto \kappa_1(t,x) \, \int_{S_1(v(t,x))}^{S_2(v(t,x))} \Psi(F(\lambda)) \diff \lambda$ is nonincreasing. However, it is not clear how $\kappa_1(t,x)$ interacts with $\int_{S_1(v(t,x))}^{S_2(v(t,x))} \Psi(F(\lambda)) \diff \lambda$ to gain more information from that.
\bibliographystyle{abbrv}
\bibliography{fastlimit}
\end{document}